\newcommand{\+}{\nobreakdash-}
\renewcommand{\:}{\colon}
\newcommand{\rarrow}{\longrightarrow}
\newcommand{\larrow}{\longleftarrow}
\newcommand{\ot}{\otimes}
\DeclareMathOperator{\Hom}{Hom}
\DeclareMathOperator{\Ext}{Ext}
\DeclareMathOperator{\Spec}{Spec}
\newcommand{\Modl}{{\operatorname{\mathsf{--Mod}}}}
\newcommand{\Qcoh}{{\operatorname{\mathsf{--Qcoh}}}}
\newcommand{\Ctrh}{{\operatorname{\mathsf{--Ctrh}}}}
\newcommand{\Comodl}{{\operatorname{\mathsf{--Comod}}}}
\newcommand{\bb}{{\mathsf{b}}}
\newcommand{\sAb}{\mathsf{Ab}}
\newcommand{\lrarrow}{\mskip.5\thinmuskip\relbar\joinrel\relbar
   \joinrel\rightarrow\mskip.5\thinmuskip\relax}
\newcommand{\bu}{{\text{\smaller\smaller$\scriptstyle\bullet$}}}
\newcommand{\cO}{\mathcal O}
\newcommand{\AB}{\mathrm{AB}}
\newcommand{\sA}{\mathsf A}
\newcommand{\sB}{\mathsf B}
\newcommand{\sC}{\mathsf C}
\newcommand{\sD}{\mathsf D}
\newcommand{\sE}{\mathsf E}
\newcommand{\sF}{\mathsf F}
\newcommand{\sK}{\mathsf K}
\newcommand{\I}{\mathcal I}
\newcommand{\J}{\mathcal J}
\newcommand{\K}{\mathcal K}
\newcommand{\cL}{\mathcal L}
\newcommand{\M}{\mathcal M}
\newcommand{\N}{\mathcal N}
\newcommand{\F}{\mathcal F}
\newcommand{\G}{\mathcal G}
\newcommand{\cH}{\mathcal H}
\newcommand{\A}{\mathcal A}
\newcommand{\B}{\mathcal B}
\newcommand{\C}{\mathcal C}
\newcommand{\Q}{\mathcal Q}
\newcommand{\boZ}{\mathbb Z}
\newcommand{\boR}{\mathbb R}
\newcommand{\Section}[1]{\bigskip\section{#1}\medskip}
\theoremstyle{plain}
\newtheorem{thm}{Theorem}[section]
\newtheorem{prop}[thm]{Proposition}
\newtheorem{lem}[thm]{Lemma}
\newtheorem{cor}[thm]{Corollary}
\theoremstyle{definition}
\newtheorem{rem}[thm]{Remark}
\newtheorem{quest}[thm]{Question}
\newtheorem{ex}[thm]{Example}
\begin{document}

\title{Roos axiom holds for quasi-coherent sheaves}

\author{Leonid Positselski}

\address{Institute of Mathematics, Czech Academy of Sciences \\
\v Zitn\'a~25, 115~67 Prague~1 \\ Czech Republic} 

\email{positselski@math.cas.cz}

\begin{abstract}
 Let $X$ be either a quasi-compact semi-separated scheme, or
a Noetherian scheme of finite Krull dimension.
 We show that the Grothendieck abelian category $X\Qcoh$ of
quasi-coherent sheaves on $X$ satisfies the Roos axiom $\AB4^*$\+$n$:
the derived functors of infinite direct product have finite homological
dimension in $X\Qcoh$.
 In each of the two settings, two proofs of the main result are given:
a more elementary one, based on the \v Cech coresolution, and a more
conceptual one, demonstrating existence of a generator of
finite projective dimension in $X\Qcoh$ in the semi-separated case
and using the co-contra correspondence (with contraherent cosheaves)
in the Noetherian case.
 The hereditary complete cotorsion pair (very flat quasi-coherent 
sheaves, contraadjusted quasi-coherent sheaves) in the abelian category
$X\Qcoh$ for a quasi-compact semi-separated scheme $X$ is discussed.
\end{abstract}

\maketitle

\tableofcontents

\section*{Introduction}
\medskip

 The condition of (existence and) exactness of infinite direct
products in an abelian category $\sA$ is known as ``Grothendieck's
axiom $\AB4^*$\,''~\cite[Section~1.5]{GrToh}.
 It is well known that the categories of quasi-coherent sheaves
$X\Qcoh$ over schemes $X$ do \emph{not} satisfy $\AB4^*$ in general.
 A counterexample of Keller, reproduced by Krause in
the paper~\cite[Example~4.9]{Kra}, shows that the infinite products are
not exact in the category of quasi-coherent sheaves over the projective
line $X=\mathbb P^1_k$ over a field~$k$.
 More generally, if $X$ is a Noetherian scheme with an ample family of
line bundles, then the infinite products are exact in $X\Qcoh$ if and
only if $X$ is affine~\cite[Theorem~1.1]{Kan}.

 The axiom $\AB4^*$\+$n$ ($n\ge0$) was proposed by Roos in his 2006
paper~\cite[Definition~1.1]{Roos} as a natural weakening of~$\AB4^*$.
 An abelian category with infinite products and enough injective
objects is said to satisfy $\AB4^*$\+$n$ if the derived functor of
infinite product $\prod_{\lambda\in\Lambda}^{(i)} A_\lambda$ vanishes
on all families of objects $(A_\lambda\in\sA)_{\lambda\in\Lambda}$
for $i>n$.
 A characterization of Grothendieck abelian categories satisfying 
$\AB4^*$\+$n$ was obtained in the same paper~\cite[Theorem~1.3]{Roos}.
 It is clear from that characterization that existence of a generator
of projective dimension~$n$ in a Grothendieck category~$\sA$ implies
$\AB4^*$\+$n$.

 Some examples of Grothendieck categories satisfying $\AB4^*$\+$n$ were
discussed in~\cite[Section~1]{Roos}, and the minimal possible value
of~$n$ for a given category was computed in some cases.
 In particular, \cite[Section~1.3]{Roos} was dedicated to the categories
of quasi-coherent sheaves on Noetherian schemes $X$, and contained
a full treatment of the case when $X$ is the complement to the closed
point in the spectrum of a Noetherian local ring.
 As an application demonstrating the utility of the $\AB4^*$\+$n$
condition, it was shown in~\cite[Theorem~2.1]{Roos} that the derived
functor $\varprojlim^*$ of inverse limit of sequences of objects
in $\sA$ (indexed by the natural numbers) has homological dimension
at most $n+1$ if $\sA$ is a Grothendieck category satisfying
$\AB4^*$\+$n$.

 Further applications of the $\AB4^*$\+$n$ condition can be found in
the recent preprint of Herbera, Pitsch, Saor\'\i n, and
Virili~\cite{HPSV}.
 According to~\cite[Theorems~2.9 and~4.3]{HPSV}, certain classical
constructions going back to Cartan--Eilenberg and Spaltenstein
allow to explicitly produce homotopy injective (DG\+injective)
resolutions of complexes in an abelian category $\sA$ provided that
$\sA$ has infinite products, enough injective objects, and satisfies
$\AB4^*$\+$n$ (see also~\cite[Theorem~1.3]{HX}).
 This allows one to avoid using the more general but less explicit
small object argument.

 The assertion that the Roos axiom $\AB4^*$\+$n$ holds for the category
of quasi-coherent sheaves $X\Qcoh$ on any quasi-compact semi-separated
scheme $X$ covered by $n+1$ affine open subschemes was first proved in
the (still unpublished) 2009~preprint of Hogadi and
Xu~\cite[Remark~3.3]{HX}.
 A more detailed exposition can be found in~\cite[Theorem~8.27]{HPSV}.
 Other results of~\cite[Theorem~1.1 and Section~3]{HX} established
the Roos axiom for certain classes of stacks.
 However, the results of~\cite{HX} and~\cite{HPSV} are not applicable
to non-semi-separated schemes.

 Recall that, generally speaking, one is not supposed to consider
the derived category of quasi-coherent sheaves on a quasi-compact
quasi-separated scheme~$X$; instead, one should work with the derived
category of sheaves of $\cO_X$\+modules with quasi-coherent cohomology
sheaves.
 The reason is that the derived functors of direct image acting
between the derived categories of quasi-coherent sheaves are not
well-behaved (do not commute with restrictions to open subschemes,
are not compatible with the compositions of morphisms of schemes, etc.)
 There are two cases when the derived category of quasi-coherent
sheaves $\sD(X\Qcoh)$ behaves well: either the scheme $X$ should be
semi-separated, or it should be Noetherian~\cite[Appendix~B]{TT},
\cite[proof of Theorem~7.1.3]{Pcosh}.
 These are the two setting considered in the present paper.
 We assume finiteness of the Krull dimension in the Noetherian case.

 In the first half of this paper
(Sections~\ref{cech-semi-separated-secn}
and~\ref{cech-noetherian-secn}), we work out an elementary approach
based on the \v Cech coresolution.
 We start with discussing the argument of Hogadi--Xu and
Herbera--Pitsch--Saor\'\i n--Virili in the semi-separated case,
and then proceed to prove the Roos axiom for the category of
quasi-coherent sheaves on a Noetherian scheme $X$ of finite Krull
dimension using a more sophisticated version of the same approach.

 In the second half of the paper (Sections~\ref{generator-secn}
and~\ref{co-contra-secn}), we discuss very flat and contraadjusted
quasi-coherent sheaves in the semi-separated case, and implications
of the ``na\"\i ve'' co-contra correspondence theorem in
the Noetherian case.
 The approach based on the co-contra correspondence is applicable
to the semi-separated case as well, but the argument with very flat
quasi-coherent sheaves is simpler.
 Notice that the \v Cech coresolution argument gives a very good
numerical bound in the semi-separated case: the Roos axiom
$\AB4^*$\+$n$ holds for $X\Qcoh$ provided that $X$ can be
covered by $N=n+1$ open affines.
 The very flat argument only proves the condition $\AB4^*$\+$N$ in this
case, but it gives a conceptually stronger result: the Grothendieck
category $X\Qcoh$ admits a generator of projective dimension~$N$.

 The notion of a \emph{very flat} quasi-coherent sheaf on a scheme $X$
was introduced in an early, September~2012, version of the present
author's preprint on contraherent cosheaves~\cite{Pcosh}.
 It was immediately clear from~\cite[Lemma~4.1.1]{Pcosh} that, for
any quasi-compact semi-separated scheme $X$, the category of
quasi-coherent sheaves $X\Qcoh$ has a very flat generator.
 It was also easy to see that the projective dimension of any very
flat quasi-coherent sheaf on $X$ does not exceed the minimal number~$N$
of open subschemes in an affine open covering of~$X$.
 But I~did not realize the importance of the property of a Grothendieck
category to have a generator of finite projective dimension back in
my Moscow years.

 That I~learned from Jan St\!'ov\'\i\v cek sometime around~2016.
 In our joint paper~\cite{PS2}, the reference to very flat
quasi-coherent sheaves and to~\cite[Lemma~4.1.1]{Pcosh} appeared in
this connection in~\cite[last paragraph of Example~6.4]{PS2}.
 A conversation with Manuel Saor\'\i n, who visited us in Prague in
July~2024, convinced me that a paragraph in an example in~\cite{PS2}
was not enough to advertise the fact, which seemed to remain unknown
to people working in the area.
 This motivated me to write the first version of the present paper up.
 Then Simone Virili told me about the \v Cech coresolution argument
and the preprint~\cite{HX}, and I~realized that such an argument can be
made to work for Noetherian schemes of finite Krull dimension as well.
 A co-contra correspondence argument for Noetherian schemes of finite
Krull dimension was subsequently worked out
in~\cite[Section~6.10]{Pcosh}.
 We discuss all these approaches to proving the Roos axiom for
quasi-coherent sheaf categories in this paper.

 In particular, a very abstract and general category-theoretic version
of the co-contra correspondence argument is presented in this paper.
 Let us start with explaining the terminology.
 Abstractly speaking, a ``na\"\i ve co-contra correspondence'' is
a derived equivalence $\sD^\star(\sA)\simeq\sD^\star(\sB)$,
where $\star$ ranges over the derived category symbols
$\bb$, $+$, $-$, and~$\varnothing$, while $\sA$ is an exact
category with exact coproduct functors and $\sB$ is an exact
category with exact product functors.
 See~\cite[Theorems~4.8.1 and~6.8.1]{Pcosh} for examples relevant in
our context.
 For comparison, a non-na\"\i ve co-contra correspondence is
a triangulated equivalence between the \emph{coderived category}
of $\sA$ and the \emph{contraderived category} of~$\sB$;
see~\cite[Theorem~9.5]{Pksurv}
and~\cite[Theorems~6.7.1 and~6.8.2]{Pcosh} for examples.
 We refer to the introduction to the paper~\cite{Pmgm} for
a general discussion of the philosophy of co-contra correspondence.

 In the context of the present paper, it is the ``na\"\i ve'' version
of co-contra correspondence that is relevant.
 Our result in this direction is stated as follows.
 Let $\sA$ and $\sB$ be idempotent-complete exact categories (in
the sense of Quillen) such that $\sA$ has infinite direct product
functors and enough injective objects, while $\sB$ has \emph{exact}
infinite product functors.
 Suppose given a triangulated equivalence between the unbounded
derived categories $\sD(\sA)\simeq\sD(\sB)$ that restricts to
an equivalence of the bounded derived categories $\sD^\bb(\sA)\simeq
\sD^\bb(\sB)$.
 Then there \emph{exists} a finite integer $n\ge0$ such that $\sA$
satisfies $\AB4^*$\+$n$ (in a suitable sense).
 For a more concrete version of this argument leading to a concrete
numerical bound $n=2D+1$ in the particular case of the abelian category
$\sA=X\Qcoh$ of quasi-coherent sheaves on a Noetherian scheme $X$ of
Krull dimension~$D$, we refer the reader
to~\cite[Theorem~6.10.1]{Pcosh}.

\subsection*{Acknowledgement}
 I~am grateful to Jan St\!'ov\'\i\v cek, Manuel Saor\'\i n,
Simone Virili, and Michal Hrbek for illuminating conversations
and correspondence.
 I~also wish to thank an anonymous referee for reading the manuscript
carefully and spotting several unpleasant misprints.
 The author is supported by the GA\v CR project 23-05148S and
the Czech Academy of Sciences (RVO~67985840).

\Section{\v Cech Coresolution Argument for Semi-Separated Schemes}
\label{cech-semi-separated-secn}

 This section contains no new results.
 It is included for the sake of completeness of the exposition and
in order to prepare ground for a more complicated version of the same
argument spelled out in Section~\ref{cech-noetherian-secn}.

\subsection{Adjusted objects} \label{adjusted-objects-subsecn}
 Let $\sE$ be an abelian category with enough injective objects.
 Let $\sA$ be another abelian category and $G\:\sE\rarrow\sA$ be
a left exact functor.
 We will denote by $\boR^*G\:\sE\rarrow\sA$ the sequence of right
derived functors of the functor~$G$ (computed, as usual, in terms
of injective coresolutions of objects of~$\sE$).

 The following lemma is standard.

\begin{lem} \label{acyclic-models}
 Let $E\in\sE$ be an object, and let\/ $0\rarrow E\rarrow K^0\rarrow
K^1\rarrow K^2\rarrow\dotsb$ be a coresolution of $E$ by objects
$K^i\in\sE$ such that\/ $\boR^mG(K^i)=0$ for all $i\ge0$ and $m\ge1$.
 Then there are natural isomorphisms
$$
 \boR^iG(E)\simeq H^iG(K^\bu), \qquad i\ge0
$$
of objects in\/~$\sA$.
\end{lem}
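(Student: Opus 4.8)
The plan is to argue by dimension shifting, reducing everything to the long exact sequence of the derived functors $\boR^*G$ attached to short exact sequences in $\sE$. First I would break the coresolution into short exact sequences: put $Z^0=E$ and, for $i\ge1$, let $Z^i$ be the image of the differential $K^{i-1}\rarrow K^i$ (equivalently, the cokernel of $K^{i-2}\rarrow K^{i-1}$, with the convention $K^{-1}=E$). Exactness of the coresolution then yields short exact sequences $0\rarrow Z^i\rarrow K^i\rarrow Z^{i+1}\rarrow0$ in $\sE$ for every $i\ge0$.

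Next I would apply the functors $\boR^*G$ to each of these short exact sequences. Using the hypothesis $\boR^mG(K^i)=0$ for $m\ge1$, the long exact sequence supplies isomorphisms $\boR^mG(Z^{i+1})\simeq\boR^{m+1}G(Z^i)$ for all $m\ge1$ and $i\ge0$, together with $\boR^1G(Z^i)\simeq\operatorname{coker}\bigl(G(K^i)\rarrow G(Z^{i+1})\bigr)$. Iterating the first isomorphism down through $Z^1,Z^2,\dots,Z^i$ produces $\boR^{i+1}G(E)=\boR^{i+1}G(Z^0)\simeq\boR^1G(Z^i)$ for every $i\ge0$.

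Then I would identify these objects with the cohomology of the complex $G(K^\bu)$. Applying the left exact functor $G$ to the monomorphism part of $0\rarrow Z^{i+1}\rarrow K^{i+1}\rarrow Z^{i+2}\rarrow0$, and using that the composite $K^{i+1}\rarrow K^{i+2}$ factors as $K^{i+1}\twoheadrightarrow Z^{i+2}\hookrightarrow K^{i+2}$ (so that $G$ carries it to a map factoring through the monomorphism $G(Z^{i+2})\hookrightarrow G(K^{i+2})$), one sees that, via the monomorphism $G(Z^{i+1})\hookrightarrow G(K^{i+1})$, the object $G(Z^{i+1})$ is precisely the cocycle object $\ker\bigl(G(K^{i+1})\rarrow G(K^{i+2})\bigr)$. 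Likewise the factorization $K^i\twoheadrightarrow Z^{i+1}\hookrightarrow K^{i+1}$ identifies the image of the differential $G(K^i)\rarrow G(K^{i+1})$ with the image of $G(K^i)\rarrow G(Z^{i+1})$. Hence $H^{i+1}G(K^\bu)\simeq\operatorname{coker}\bigl(G(K^i)\rarrow G(Z^{i+1})\bigr)\simeq\boR^1G(Z^i)\simeq\boR^{i+1}G(E)$; and the case $i=0$ is the tautology $H^0G(K^\bu)=\ker\bigl(G(K^0)\rarrow G(K^1)\bigr)=G(E)=\boR^0G(E)$, once more by left exactness of~$G$ applied to $0\rarrow E\rarrow K^0\rarrow Z^1\rarrow0$.

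Finally, for naturality I would verify that every construction above --- the objects $Z^i$, the connecting morphisms, the iterated isomorphisms, and the cocycle/coboundary identifications --- is functorial with respect to morphisms of pairs, i.e.\ a morphism $E\rarrow E'$ in $\sE$ together with a compatible chain map $K^\bu\rarrow(K')^\bu$ of coresolutions (which exists and is unique up to chain homotopy by the usual comparison theorem for resolutions). I expect the main obstacle to be purely bookkeeping: tracking the connecting homomorphisms through the iteration and checking that the resulting identification of $H^\bu G(K^\bu)$ does not depend on the auxiliary choice of chain map. An alternative, more conceptual route, which I would only sketch, is to note that the exact coresolution makes $K^\bu$ a representative of $E$ in $\sD^+(\sE)$, and that a bounded-below complex all of whose terms are acyclic for $\boR^{\ge1}G$ computes the hyper-derived functor $\boR G$ by applying $G$ termwise; this gives $H^iG(K^\bu)=H^i\boR G(E)=\boR^iG(E)$ at once, at the cost of invoking the standard Cartan--Eilenberg resolution machinery.
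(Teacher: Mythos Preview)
Your proof is correct and follows exactly the approach the paper indicates: the paper's own proof is a one-sentence pointer to induction via the long exact sequences of $\boR^*G$ associated with the short exact sequences obtained by splicing the coresolution (citing Weibel's Exercise~2.4.3), and your argument is a careful expansion of precisely that dimension-shifting computation. There is nothing to add.
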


\begin{proof}
 This is provable by induction using the long exact sequences of
the derived functors $\boR^*G$ associated with short exact sequences
of objects in~$\sE$.
 See, e.~g., \cite[Exercise~2.4.3]{Wei}.
\end{proof}

\subsection{Direct images under flat affine morphisms}
 Let\/ $\sA$ be an abelian category with infinite products and enough
injective objects.
 Let $\Lambda$ be a set.
 We are interested in the case when $\sE=\sA^\Lambda=
\times_{\lambda\in\Lambda}\,\sA$ is the Cartesian product of $\Lambda$
copies of~$\sA$; so the objects of $\sE$ are all the $\Lambda$\+indexed
collections $E=(A_\lambda\in\sA)_{\lambda\in\Lambda}$ of objects
of~$\sA$.

 Furthermore, we are interested in the direct product functor
$G=\prod_{\lambda\in\Lambda}\:\sE\rarrow\sA$;
so $G((A_\lambda)_{\lambda\in\Lambda})=\prod_{\lambda\in\Lambda}
A_\lambda\in\sA$.
 Obviously, there are enough injective objects in $\sE$; the injective
objects of $\sE$ are precisely all the $\Lambda$\+indexed collections
of injective objects in~$\sA$.
 So the derived functors $\boR^*G$ can be constructed using injective 
coresolutions in~$\sE$.
 We will denote them by
$\prod_{\lambda\in\Lambda}^{(m)}((A_\lambda)_{\lambda\in\Lambda})
=\boR^mG((A_\lambda)_{\lambda\in\Lambda})$, \,$m\ge0$.

 More specifically, let $X$ be a scheme and $\sA=X\Qcoh$ be
the category of quasi-coherent sheaves on~$X$.
 It is known~\cite[Proposition Tag~077P]{SP} that $X\Qcoh$ is
a Grothendieck abelian category; so it has enough injective objects
by~\cite[Th\'eor\`eme~1.10.1]{GrToh}.

 Given a quasi-compact quasi-separated morphism of schemes
$f\:Y\rarrow X$, we denote by $f_*\:Y\Qcoh\rarrow X\Qcoh$ the functor
of direct image of quasi-coherent sheaves with respect to~$f$.
 The functor~$f_*$ is right adjoint to the functor of inverse image
$f^*\:X\Qcoh\rarrow Y\Qcoh$.

\begin{lem} \label{flat-affine-direct-image-lemma}
 Let $f\:Y\rarrow X$ be a flat affine morphism of schemes.
 Then, for any family of quasi-coherent sheaves
$(\N_\lambda\in Y\Qcoh)_{\lambda\in\Lambda}$ on $Y$ there are
natural isomorphisms
$$
 \prod\nolimits_{\lambda\in\Lambda}^{(m)}f_*\N_\lambda\simeq
 f_*\prod\nolimits_{\lambda\in\Lambda}^{(m)}\N_\lambda, \qquad m\ge0
$$
of quasi-coherent sheaves on~$X$.
 In particular, if\/
$\prod_{\lambda\in\Lambda}^{(m)}\N_\lambda=0$ for all $m\ge1$, then
also\/ $\prod_{\lambda\in\Lambda}^{(m)}f_*\N_\lambda=0$ for all $m\ge1$.
\end{lem}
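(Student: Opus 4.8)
The plan is to exploit three properties of the direct image functor $f_*\:Y\Qcoh\rarrow X\Qcoh$ that follow from the hypotheses on~$f$: it is exact, because $f$ is affine; it preserves infinite products, because it is right adjoint to~$f^*$; and it carries injective quasi-coherent sheaves to injective quasi-coherent sheaves, because $f$ is flat. The third property is where flatness enters: since $f^*$ is then exact, the functor $\Hom_{X\Qcoh}(-,f_*\J)\simeq\Hom_{Y\Qcoh}(f^*(-),\J)$ is exact whenever $\J\in Y\Qcoh$ is injective, so $f_*\J$ is injective. Granting these three properties, I would compute both sides of the desired isomorphism from one and the same family of injective coresolutions.

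Concretely, I would start by choosing, for each $\lambda\in\Lambda$, an injective coresolution $0\rarrow\N_\lambda\rarrow\J_\lambda^0\rarrow\J_\lambda^1\rarrow\dotsb$ in $Y\Qcoh$. By the description of injective objects in a Cartesian product category recalled above, the collection $(\J_\lambda^\bu)_{\lambda\in\Lambda}$ is an injective coresolution of $(\N_\lambda)_{\lambda\in\Lambda}$ in $(Y\Qcoh)^\Lambda$; hence $\prod_{\lambda\in\Lambda}^{(m)}\N_\lambda=H^m\bigl(\prod_{\lambda\in\Lambda}\J_\lambda^\bu\bigr)$ for all $m\ge0$, by the construction of the derived functor of $\prod_{\lambda\in\Lambda}$ via injective coresolutions. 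Applying the exact functor $f_*$, which therefore commutes with cohomology, and then using that $f_*$ preserves products, I would obtain isomorphisms
$$
 f_*\prod\nolimits_{\lambda\in\Lambda}^{(m)}\N_\lambda
 \;\simeq\;H^m\Bigl(f_*\prod\nolimits_{\lambda\in\Lambda}\J_\lambda^\bu\Bigr)
 \;\simeq\;H^m\Bigl(\prod\nolimits_{\lambda\in\Lambda}f_*\J_\lambda^\bu\Bigr),\qquad m\ge0.
$$

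On the other hand, exactness of $f_*$ makes $0\rarrow f_*\N_\lambda\rarrow f_*\J_\lambda^0\rarrow f_*\J_\lambda^1\rarrow\dotsb$ an exact sequence in $X\Qcoh$, and its terms $f_*\J_\lambda^i$ are injective by the third property above; so $(f_*\J_\lambda^\bu)_{\lambda\in\Lambda}$ is an injective coresolution of $(f_*\N_\lambda)_{\lambda\in\Lambda}$ in $(X\Qcoh)^\Lambda$, whence $\prod_{\lambda\in\Lambda}^{(m)}f_*\N_\lambda=H^m\bigl(\prod_{\lambda\in\Lambda}f_*\J_\lambda^\bu\bigr)$. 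Comparing with the previous display yields the asserted isomorphism, and the last sentence of the lemma is its special case for $m\ge1$ under the assumption $\prod_{\lambda\in\Lambda}^{(m)}\N_\lambda=0$. For naturality in $(\N_\lambda)_{\lambda\in\Lambda}$ I would appeal to the functoriality of injective coresolutions up to chain homotopy, or else check that the composite isomorphism coincides with the evident base-change comparison morphism and invoke the universality of derived functors.

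I do not expect a genuine obstacle here: the entire argument is the assembly of three standard properties of~$f_*$. The one point that requires a word of care is the bookkeeping that the derived product in a Cartesian product category $(Y\Qcoh)^\Lambda$ is literally computed by the termwise infinite product of injective coresolutions chosen in $Y\Qcoh$ --- this is immediate from the componentwise description of kernels, cokernels, products, and injective objects in $(Y\Qcoh)^\Lambda$, but it is precisely what makes the two computations line up.
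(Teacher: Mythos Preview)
Your proposal is correct and follows essentially the same route as the paper's proof: both isolate the three properties of $f_*$ (exactness from affineness, preservation of products from being a right adjoint, preservation of injectives from flatness of~$f^*$) and then compute each side of the isomorphism from one chosen family of injective coresolutions $0\rarrow\N_\lambda\rarrow\J_\lambda^\bu$ in $Y\Qcoh$. The only cosmetic difference is the order in which you traverse the chain of isomorphisms; the paper writes it as a single display $\prod^{(m)}_{\lambda}f_*\N_\lambda\simeq\dotsb\simeq f_*\prod^{(m)}_{\lambda}\N_\lambda$, while you work inward from both ends, but the content is identical.
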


\begin{proof}
 The assertion follows from the construction of the derived functors
$\prod_{\lambda\in\Lambda}^{(m)}$ in $Y\Qcoh$ and $X\Qcoh$ in view
of the following three observations:
\begin{itemize}
\item the direct image functor~$f_*$ preserves infinite products
(being a right adjoint functor);
\item the direct image functor~$f_*$ takes injective objects of
$Y\Qcoh$ to injective objects of $X\Qcoh$ (because $f_*$~is right
adjoint to the inverse image functor~$f^*$, which is exact for
flat morphisms~$f$);
\item the direct image functor~$f_*$ is exact (since the morphism~$f$
is affine).
\end{itemize}
 It suffices to choose an injective coresolution $0\rarrow\N_\lambda
\rarrow\J_\lambda^\bu$ in $Y\Qcoh$ for every quasi-coherent sheaf
$\N_\lambda$, notice that $0\rarrow f_*\N_\lambda \rarrow
f_*\J_\lambda^\bu$ is an injective coresolution of $f_*\N_\lambda$
in $X\Qcoh$, and compute that
\begin{multline*}
 \prod\nolimits_{\lambda\in\Lambda}^{(m)}f_*\N_\lambda=
 \cH^m\left(\prod\nolimits_{\lambda\in\Lambda}f_*\J_\lambda^\bu\right)
 \simeq
 \cH^m\left(f_*\prod\nolimits_{\lambda\in\Lambda}\J_\lambda^\bu\right)
 \\ \simeq
 f_*\cH^m\left(\prod\nolimits_{\lambda\in\Lambda}\J_\lambda^\bu\right)
 =f_*\prod\nolimits_{\lambda\in\Lambda}^{(m)}\N_\lambda,
\end{multline*}
where $\cH^m$ denotes the cohomology sheaves of a complex of
quasi-coherent sheaves.
\end{proof}

\subsection{\v Cech coresolution}
 Let $X$ be a quasi-compact quasi-separated scheme and
$X=\bigcup_{\alpha=1}^N U_\alpha$ be its finite covering by
quasi-compact open subschemes $U_\alpha\subset X$.
 For any subsequence of indices $1\le\alpha_1<\dotsb<\alpha_r\le N$,
put $U_{\alpha_1,\dotsc,\alpha_r}=\bigcap_{s=1}^rU_{\alpha_s}
\subset X$; then $U_{\alpha_1,\dotsc,\alpha_r}$ is also
a quasi-compact open subscheme in~$X$.
 Denote its open immersion morphism by $j_{\alpha_1,\dotsc,\alpha_r}\:
U_{\alpha_1,\dotsc,\alpha_r}\rarrow X$ (in particular, the open
immersion morphisms $U_\alpha\rarrow X$ are denoted by~$j_\alpha$).

\begin{lem} \label{cech-coresolution-lemma}
 For any quasi-coherent sheaf\/ $\M$ on $X$, there is a natural
exact sequence of quasi-coherent sheaves on~$X$
\begin{multline} \label{cech-coresolution}
 0\lrarrow\M\lrarrow\bigoplus\nolimits_{\alpha=1}^N
 j_\alpha{}_*j_\alpha^*\M
 \lrarrow\bigoplus\nolimits_{1\le\alpha<\beta\le N}
 j_{\alpha,\beta}{}_*j_{\alpha,\beta}^*\M \\ \lrarrow\dotsb\lrarrow
 j_{1,\dotsc,N}{}_*j_{1,\dotsc,N}^*\M\lrarrow0.
\end{multline}
\end{lem}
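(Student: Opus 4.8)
The plan is to reduce the claim to an exactness statement for sheaves of $\cO_X$\+modules, and then to verify that statement locally on $X$ by an explicit contracting homotopy. First I would check that every term of~\eqref{cech-coresolution} is quasi-coherent. For an open immersion $j$ the inverse image functor $j^*$ is just restriction, so $j_{\alpha_1,\dotsc,\alpha_r}^*\M=\M|_{U_{\alpha_1,\dotsc,\alpha_r}}$ is quasi-coherent. Since $X$ is quasi-separated and the $U_\alpha$ are quasi-compact, every intersection $U_{\alpha_1,\dotsc,\alpha_r}$ is quasi-compact, and each open immersion $j_{\alpha_1,\dotsc,\alpha_r}$ is quasi-compact and separated; hence $j_{\alpha_1,\dotsc,\alpha_r}{}_*$ takes quasi-coherent sheaves to quasi-coherent sheaves, by the standard fact that direct images under quasi-compact quasi-separated morphisms preserve quasi-coherence (see, e.g., \cite{SP}). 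As $X\Qcoh$ is closed under kernels and cokernels inside the abelian category $\cO_X\Modl$ of all sheaves of $\cO_X$\+modules, it now suffices to prove that~\eqref{cech-coresolution} is exact in $\cO_X\Modl$, or even merely as a complex of sheaves of abelian groups. The morphisms in~\eqref{cech-coresolution} are the usual alternating sums of restriction morphisms pushed forward to $X$, and the whole construction is plainly functorial in~$\M$.

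Exactness of a complex of sheaves can be tested after restriction to the members of an open covering of $X$, so it is enough to restrict~\eqref{cech-coresolution} to each $V=U_\alpha$. For an open immersion $j\:U\rarrow X$ one has $(j_*\F)(W)=\F(W\cap U)$, and hence $(j_*\F)|_V=j'_*\bigl(\F|_{U\cap V}\bigr)$ for the induced open immersion $j'\:U\cap V\rarrow V$. Consequently the restriction of~\eqref{cech-coresolution} to $V$ is identified, naturally and as a complex, with the \v Cech sequence on $V$ attached to the open covering $V=\bigcup_{\beta=1}^N(U_\beta\cap V)$ and the quasi-coherent sheaf $\M|_V$; and in this covering the member with index~$\alpha$ equals all of $V$.

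It remains to recall that if $Y=\bigcup_\beta W_\beta$ is an open covering in which $W_{\beta_0}=Y$ for some index~$\beta_0$, then the augmented \v Cech complex of any sheaf $\F$ on $Y$ is contractible. The contracting homotopy $h$ is the classical ``cone point'' one: it sends a section $s$ to the section whose $(\beta_1,\dotsc,\beta_r)$\+component is, up to a sign determined by reordering the indices, the component $s_{\beta_0,\beta_1,\dotsc,\beta_r}$ of $s$. The restriction maps occurring in those summands of the \v Cech differential that involve the index~$\beta_0$ are identities, because $W_{\beta_0,\gamma_1,\dotsc,\gamma_k}=W_{\gamma_1,\dotsc,\gamma_k}$, and a short computation gives $dh+hd=\mathrm{id}$ on the augmented complex. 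Applying this with $Y=U_\alpha$ and $\F=\M|_{U_\alpha}$, we conclude that~\eqref{cech-coresolution} becomes exact after restriction to each $U_\alpha$, and therefore is exact on $X$.

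The only step in which the hypotheses on $X$ are genuinely used is the first one: over an arbitrary scheme, or even an arbitrary ringed space, and for an arbitrary open covering, the sequence~\eqref{cech-coresolution} is still exact in $\cO_X\Modl$, but its terms need not be quasi-coherent. I expect the sole mildly tedious point to be the sign bookkeeping in the contracting homotopy of the third step, which is just the classical verification that a \v Cech resolution is a resolution.
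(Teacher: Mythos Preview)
Your proof is correct and follows the same approach as the paper: restrict the complex to each $U_\alpha$ and verify it becomes contractible there via the standard \v Cech cone-point homotopy. You supply useful extra detail---the quasi-coherence of the terms and the explicit base-change identity $(j_*\F)|_V\simeq j'_*(\F|_{U\cap V})$---that the paper leaves implicit, but the argument is essentially identical.
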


\begin{proof}
 The construction of the complex~\eqref{cech-coresolution} is
straightforward.
 In order to show that it is an exact sequence, one can check that
its restriction to every $U_\alpha$ (i.~e., the inverse image with
respect to~$j_\alpha$) is contractible.
 (Cf.~\cite[Lemma~III.4.2]{HarAG}.)
\end{proof}

\subsection{Roos axiom holds for quasi-compact semi-separated schemes}
 Now we can spell out the proof of the theorem of
Hogadi--Xu~\cite[Remark~3.3]{HX} and
Herbera--Pitsch--Saor\'\i n--Virili~\cite[Theorem~8.27]{HPSV}.

\begin{thm} \label{roos-axiom-cech-coresolution-semi-separated}
 Let $X=\bigcup_{\alpha=1}^N U_\alpha$ be a quasi-compact semi-separated
scheme covered by $N$ affine open subschemes.
 Then the Roos axiom $\AB4^*$\+$n$ from\/~\cite[Definition~1.1]{Roos}
with the parameter $n=N-1$ holds for the Grothendieck category $X\Qcoh$.
 In other words, for any set\/ $\Lambda$ and any family of
quasi-coherent sheaves $(\M_\lambda\in X\Qcoh)_{\lambda\in\Lambda}$,
one has\/ $\prod_{\lambda\in\Lambda}^{(i)}\M_\lambda=0$ for all $i>n$.
\end{thm}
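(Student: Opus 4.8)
The plan is to assemble the three lemmas above. Fix a set $\Lambda$ and a family $(\M_\lambda\in X\Qcoh)_{\lambda\in\Lambda}$, viewed as a single object $E=(\M_\lambda)_{\lambda\in\Lambda}$ of the abelian category $\sE=(X\Qcoh)^\Lambda$, and write $G=\prod_{\lambda\in\Lambda}\:\sE\rarrow X\Qcoh$ for the product functor, so that $\prod_\lambda^{(i)}\M_\lambda=\boR^iG(E)$. I~would apply the \v Cech coresolution of Lemma~\ref{cech-coresolution-lemma} to all the $\M_\lambda$ at once; since exactness in $(X\Qcoh)^\Lambda$ is tested componentwise, this yields a coresolution
$$
 0\lrarrow E\lrarrow K^0\lrarrow K^1\lrarrow\dotsb\lrarrow K^{N-1}\lrarrow0
$$
in~$\sE$, with $K^{r-1}=\bigl(\bigoplus_{1\le\alpha_1<\dotsb<\alpha_r\le N}
j_{\alpha_1,\dotsc,\alpha_r}{}_*j_{\alpha_1,\dotsc,\alpha_r}^*\M_\lambda\bigr)_{\lambda\in\Lambda}$, concentrated in cohomological degrees $0$ through~$N-1$. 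By Lemma~\ref{acyclic-models} applied with $\sE=(X\Qcoh)^\Lambda$, it then suffices to check that each term $K^i$ satisfies $\boR^mG(K^i)=0$ for all $m\ge1$; granting this, $\prod_\lambda^{(i)}\M_\lambda=H^iG(K^\bu)$, which vanishes for $i\ge N$ because $K^\bu$ lives in degrees $\le N-1$, i.e., $\prod_\lambda^{(i)}\M_\lambda=0$ for $i>n=N-1$, as claimed.

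The main point is the acyclicity of the $K^i$, and this is where semi-separatedness enters. First I~would recall that for a semi-separated scheme $X$ the intersection of any two affine open subschemes is affine; applying this repeatedly, every finite intersection $U_{\alpha_1,\dotsc,\alpha_r}$ is affine, and moreover the open immersion $j_{\alpha_1,\dotsc,\alpha_r}\:U_{\alpha_1,\dotsc,\alpha_r}\rarrow X$ is an affine morphism, since its restriction over an affine open $V\subset X$ is the iterated intersection $V\cap U_{\alpha_1}\cap\dotsb\cap U_{\alpha_r}$, which is affine by the same token. An open immersion is also flat, so each $j_{\alpha_1,\dotsc,\alpha_r}$ is a flat affine morphism. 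Since the derived functors $\boR^*G$ commute with finite direct sums (a finite direct sum of injective coresolutions is an injective coresolution, and $\prod_\lambda$ commutes with finite direct sums in $X\Qcoh$), and only a finite set of subsequences occurs in each $K^i$, it remains to prove that $\bigl(j_{\alpha_1,\dotsc,\alpha_r}{}_*j_{\alpha_1,\dotsc,\alpha_r}^*\M_\lambda\bigr)_{\lambda\in\Lambda}$ is $\boR^*G$\+acyclic for each fixed subsequence $\alpha_1<\dotsb<\alpha_r$.

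For this I~would invoke Lemma~\ref{flat-affine-direct-image-lemma} with $f=j_{\alpha_1,\dotsc,\alpha_r}$, \ $Y=U_{\alpha_1,\dotsc,\alpha_r}$, and $\N_\lambda=j_{\alpha_1,\dotsc,\alpha_r}^*\M_\lambda\in Y\Qcoh$. As $Y$ is affine, $Y\Qcoh$ is equivalent to the category of modules over the ring $\cO(Y)$, in which infinite products are exact; hence $Y\Qcoh$ satisfies $\AB4^*$, so $\prod_\lambda^{(m)}\N_\lambda=0$ in $Y\Qcoh$ for all $m\ge1$. The last assertion of Lemma~\ref{flat-affine-direct-image-lemma} then gives $\prod_\lambda^{(m)}j_{\alpha_1,\dotsc,\alpha_r}{}_*\N_\lambda=0$ in $X\Qcoh$ for all $m\ge1$, which is the required acyclicity of the summands of~$K^i$. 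The only step demanding genuine care is the verification that semi-separatedness makes all the \v Cech morphisms $j_{\alpha_1,\dotsc,\alpha_r}$ affine with affine source; everything else is a formal composition of the three preceding lemmas.
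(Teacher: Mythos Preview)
Your proof is correct and follows essentially the same approach as the paper's: both use the \v Cech coresolution of Lemma~\ref{cech-coresolution-lemma}, the vanishing of higher derived products over affines via Lemma~\ref{flat-affine-direct-image-lemma}, and then conclude by Lemma~\ref{acyclic-models}. The only cosmetic difference is that the paper packages the summands at each level into a single flat affine morphism $f_r\:U_r=\coprod_{\alpha_1<\dotsb<\alpha_r}U_{\alpha_1,\dotsc,\alpha_r}\rarrow X$, whereas you treat each open immersion $j_{\alpha_1,\dotsc,\alpha_r}$ separately and invoke compatibility of $\boR^*G$ with finite direct sums; both organizations amount to the same argument.
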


\begin{proof}
 For every $1\le r\le N$, denote by $U_r$ the disjoint union
$$
 U_r=\coprod\nolimits_{1\le\alpha_1<\dotsb<\alpha_r\le N}
 U_{\alpha_1}\cap\dotsb\cap U_{\alpha_r}.
$$
 This is a finite disjoint union of affine schemes, so $U_r$ is
an affine scheme.
 Clearly, the natural morphism $f_r\:U_r\rarrow X$ is flat and affine.

 Let $\Lambda$ be a set and $(\M_\lambda\in X\Qcoh)_{\lambda\in\Lambda}$
be a family of quasi-coherent sheaves on~$X$.
 By Lemma~\ref{cech-coresolution-lemma}, each sheaf $\M_\lambda$ has
the \v Cech coresolution~\eqref{cech-coresolution}, which has the form
\begin{equation} \label{cech-coresolution-rewritten}
 0\lrarrow\M_\lambda\lrarrow f_1{}_*\N_{\lambda,1}\lrarrow\dotsb
 \lrarrow f_N{}_*\N_{\lambda,N}\lrarrow0
\end{equation}
for suitable quasi-coherent sheaves $\N_{\lambda,r}$ on~$U_r$.
 The scheme $U_r$ is affine, so the direct product functors in
$U_r\Qcoh$ are exact and
$\prod^{(m)}_{\lambda\in\Lambda}\N_{\lambda,r}=0$ for all $m\ge1$.

 By Lemma~\ref{flat-affine-direct-image-lemma}, it follows that
$\prod^{(m)}_{\lambda\in\Lambda}f_r{}_*\N_{\lambda,r}=0$ for
all $m\ge1$ and $1\le r\le N$.
 Finally, the \v Cech coresolution~\eqref{cech-coresolution-rewritten}
has length $N-1$, so Lemma~\ref{acyclic-models} implies that
$\prod^{(i)}_{\lambda\in\Lambda}\M_\lambda=0$ for all $i>N-1$.
\end{proof}

\Section{\v Cech Coresolution Argument for Noetherian Schemes}
\label{cech-noetherian-secn}

 The aim of this section is to prove the Roos axiom for the categories
$X\Qcoh$ of quasi-coherent sheaves on Noetherian schemes of finite
Krull dimension.

\subsection{Weakly adjusted objects}
 As in Section~\ref{adjusted-objects-subsecn}, we consider an abelian
category $\sE$ with enough injective objects, an abelian category $\sA$,
and a left exact functor $G\:\sE\rarrow\sA$.

\begin{lem} \label{weakly-adjusted-objects-lemma}
 Let $E\in\sE$ be an object, and let\/ $0\rarrow E\rarrow K^0\rarrow
K^1\rarrow\dotsb\rarrow K^d\rarrow0$ be a finite coresolution of $E$ by
objects $K^i\in\sE$ such that\/ $\boR^mG(K^i)=0$ for all $i\ge0$ and
$m>d-i$.
 Then one has\/ $\boR^iG(E)=0$ for all $i>d$.
\end{lem}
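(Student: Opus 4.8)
The statement to prove is Lemma~\ref{weakly-adjusted-objects-lemma}: given a finite coresolution $0 \to E \to K^0 \to \dots \to K^d \to 0$ in $\sE$ with $\boR^m G(K^i) = 0$ whenever $m > d - i$, one has $\boR^i G(E) = 0$ for $i > d$. The plan is to proceed exactly as in the proof of Lemma~\ref{acyclic-models}: break the coresolution into short exact sequences and chase the long exact sequences of the derived functors $\boR^* G$. The key point is that here we are not asking for a clean isomorphism $\boR^i G(E) \simeq H^i G(K^\bu)$ — indeed the $K^i$ are not assumed $G$-acyclic in all degrees — but only for the \emph{vanishing} of $\boR^i G(E)$ above the length $d$, and the hypothesis on $K^i$ is correspondingly weaker (only $\boR^m G(K^i) = 0$ for $m > d - i$, a condition that gets laxer as $i$ grows).

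\textbf{Setting up the induction.} First I would let $Z^i = \operatorname{coker}(K^{i-1} \to K^i) = \operatorname{im}(K^i \to K^{i+1})$ for $0 \le i \le d-1$, with the conventions $Z^{-1} = E$ and $Z^{d} = 0$ (equivalently $K^{d-1} \to K^d$ is surjective, so $Z^{d-1}$ sits in $0 \to Z^{d-2} \to K^{d-1} \to Z^{d-1} \to 0$ and also $Z^{d-1} = K^d$... let me instead set it up as: $0 \to Z^{i-1} \to K^i \to Z^i \to 0$ is exact for each $0 \le i \le d-1$, where $Z^{-1} = E$ and $Z^{d-1} = K^d$). The claim I would prove by descending... actually ascending induction is cleaner here. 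For each $0 \le i \le d$, I claim $\boR^m G(Z^{i-1}) = 0$ for all $m > d - i + 1$, i.e. $\boR^m G(Z^{i-1})$ vanishes for $m \ge d - i + 2$. The base case $i = d$ reads: $\boR^m G(Z^{d-1}) = \boR^m G(K^d) = 0$ for $m > 1 = d - d + 1$, which holds since $K^d$ satisfies $\boR^m G(K^d) = 0$ for $m > d - d = 0$. For the desired conclusion, taking $i = 0$ gives $\boR^m G(E) = \boR^m G(Z^{-1}) = 0$ for $m > d + 1$, which is almost what we want — so I should be slightly more careful with the bookkeeping to land on $m > d$ rather than $m > d+1$; the right statement to carry is that $\boR^m G(Z^{i-1}) = 0$ for $m > d - i$, checked by a shift.

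\textbf{The inductive step.} Suppose $\boR^m G(Z^i) = 0$ for all $m > d - i - 1$. The short exact sequence $0 \to Z^{i-1} \to K^i \to Z^i \to 0$ yields a long exact sequence
\[
 \dots \lrarrow \boR^m G(K^i) \lrarrow \boR^m G(Z^i) \lrarrow \boR^{m+1} G(Z^{i-1}) \lrarrow \boR^{m+1} G(K^i) \lrarrow \dotsb
\]
For $m \ge d - i + 1$, the outer term $\boR^{m+1} G(K^i)$ vanishes because $m + 1 \ge d - i + 2 > d - i$, and $\boR^m G(Z^i) = 0$ by the inductive hypothesis since $m \ge d - i + 1 > d - i - 1$; hence the middle term $\boR^{m+1} G(Z^{i-1}) = 0$, i.e. $\boR^m G(Z^{i-1}) = 0$ for $m > d - i$. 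This closes the induction, and the case $i = 0$ delivers $\boR^m G(E) = 0$ for $m > d$, as desired.

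\textbf{Main obstacle.} There is no deep obstacle — the argument is a routine diagram chase, as the phrase ``The following lemma is standard'' in the companion Lemma~\ref{acyclic-models} suggests the author regards this variant as. The only thing requiring genuine attention is the off-by-one bookkeeping: getting the index ranges in the induction hypothesis exactly right so that the outer terms of the long exact sequence vanish at the correct spot and the final conclusion is $m > d$ and not $m > d + 1$. One could alternatively cite a reference in the style of \cite[Exercise~2.4.3]{Wei}, but since the hypothesis here is the asymmetric ``$m > d - i$'' condition rather than outright acyclicity, spelling out the short induction is the honest thing to do.
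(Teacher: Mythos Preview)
Your approach is essentially the same as the paper's: splice the coresolution into short exact sequences and chase the long exact sequences of $\boR^*G$. The paper packages this as an induction on the length~$d$ (setting $F=\operatorname{im}(K^0\to K^1)$, observing that $0\to F\to K^1\to\dotsb\to K^d\to0$ satisfies the hypothesis with $d-1$ in place of~$d$, and then using $0\to E\to K^0\to F\to0$), whereas you fix the coresolution and do a descending induction on the position~$i$; these are the same argument in different clothing.

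One genuine slip, though: in your inductive step you write ``For $m\ge d-i+1$'' and conclude $\boR^{m+1}G(Z^{i-1})=0$, which only yields vanishing in degrees $\ge d-i+2$, not the claimed $>d-i$. The fix is exactly the off-by-one you warned yourself about: start instead with $m\ge d-i$. Then $\boR^{m+1}G(K^i)=0$ since $m+1>d-i$, and $\boR^mG(Z^i)=0$ since $m\ge d-i>d-i-1$, so $\boR^{m+1}G(Z^{i-1})=0$ for all $m\ge d-i$, i.e.\ $\boR^{m'}G(Z^{i-1})=0$ for $m'>d-i$, which is what you need. With that correction the proof is complete.
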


\begin{proof}
 The case $d=0$ is obvious.
 For $d\ge1$, one can argue by induction on~$d$.
 Denoting the image of the morphism $K^0\rarrow K^1$ by $F$, we have
$\boR^iG(F)=0$ for all $i>d-1$ by the induction assumption, and
it remains to use the long exact sequence $\dotsb\rarrow\boR^{i-1}G(F)
\rarrow\boR^iG(E)\rarrow\boR^iG(K^0)\rarrow\dotsb$.
\end{proof}

\subsection{Adjusted classes} \label{adjusted-classes-subsecn}
 Let $\sE$ be an abelian category with enough injective objects.
 We will say that a class of objects $\sK\subset\sE$ is
\emph{coresolving} if it satisfies the following conditions:
\begin{enumerate}
\renewcommand{\theenumi}{\roman{enumi}}
\item the full subcategory $\sK$ is closed under extensions in~$\sE$;
\item the full subcategory $\sK$ is closed under the cokernels of
monomorphisms in~$\sE$;
\item all the injective objects of $\sE$ belong to~$\sK$.
\end{enumerate}

 Let $\sE$ be an abelian category and $\sK\subset\sE$ be a coresolving
class of objects in~$\sE$.
 One says that the \emph{coresolution dimension} of an object
$E\in\sE$ with respect to the coresolving class $\sK\subset\sE$
does not exceed an integer~$d\ge0$ if there exists an exact sequence
$0\rarrow E\rarrow K^0\rarrow K^1\rarrow\dotsb\rarrow K^d\rarrow0$
in $\sE$ with $K^i\in\sK$ for all $0\le i\le d$.
 One can show that the $\sK$\+coresolution dimension of an object
$E\in\sE$ does not depend on the choice of a coresolution of $E$
by objects from $\sK$, in an obvious sense~\cite[Lemma~2.1]{Zhu},
\cite[Proposition~2.3]{Sto}, \cite[Corollary~A.5.2]{Pcosh}.

 Let $\sA$ be another abelian category and $G\:\sE\rarrow\sA$ be
a left exact functor.
 We will say that a coresolving class of objects $\sK\subset\sE$ is
\emph{adjusted to~$G$} if it satisfies the following condition:
\begin{enumerate}
\renewcommand{\theenumi}{\roman{enumi}}
\setcounter{enumi}{3}
\item for any short exact sequence $0\rarrow K'\rarrow K\rarrow K''
\rarrow0$ in $\sE$ with the objects $K'$, $K$, $K''$ belonging to $\sK$,
the short sequence
$$
 0\lrarrow G(K')\lrarrow G(K)\lrarrow G(K'')\lrarrow0
$$
is exact in~$\sA$.
\end{enumerate}

\begin{lem} \label{adjusted-class-lemma}
 Let\/ $\sE$ be an abelian category with enough injective objects,
$\sA$ be an abelian category, $G\:\sE\rarrow\sA$ be a left exact
functor, and\/ $\sK\subset\sE$ be a coresolving class of objects
adjusted to~$G$.
 Let $E\in\sE$ be an object and\/ $0\rarrow E\rarrow K^0\rarrow K^1
\rarrow K^2\rarrow\dotsb$ be a coresolution of the object $E$ by
objects $K^i\in\sK$ in the abelian category\/~$\sE$.
 Then there are natural isomorphisms
$$
 \boR^iG(E)\simeq H^iG(K^\bu), \qquad i\ge0
$$
of objects in\/~$\sA$.
\end{lem}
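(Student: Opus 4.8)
\emph{Proof proposal.}
The plan is to reduce the statement to Lemma~\ref{acyclic-models}. The key point I would isolate is that an adjusted coresolving class automatically consists of $G$\+acyclic objects: namely, $\boR^m G(K)=0$ for every $K\in\sK$ and every $m\ge1$. Granting this, the given coresolution $0\rarrow E\rarrow K^0\rarrow K^1\rarrow K^2\rarrow\dotsb$ is a coresolution of $E$ by $G$\+acyclic objects, so Lemma~\ref{acyclic-models} applies directly and yields the asserted natural isomorphisms $\boR^iG(E)\simeq H^iG(K^\bu)$ for all $i\ge0$.

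To prove the acyclicity claim, fix $K\in\sK$ and choose an injective coresolution $0\rarrow K\rarrow I^0\rarrow I^1\rarrow I^2\rarrow\dotsb$ in $\sE$ (possible since $\sE$ has enough injective objects). Put $W^0=K$ and let $W^{j+1}$ be the cokernel of the monomorphism $W^j\rarrow I^j$, so that there are short exact sequences $0\rarrow W^j\rarrow I^j\rarrow W^{j+1}\rarrow0$ in $\sE$ for all $j\ge0$. By induction on~$j$, each $W^j$ lies in $\sK$: indeed $W^0=K\in\sK$, and if $W^j\in\sK$ then $I^j\in\sK$ by condition~(iii) and hence $W^{j+1}\in\sK$ by condition~(ii), closure under cokernels of monomorphisms.

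Now apply $G$ to the short exact sequences above. For each $j$, all three terms $W^j$, $I^j$, $W^{j+1}$ belong to $\sK$, so condition~(iv) guarantees that $0\rarrow G(W^j)\rarrow G(I^j)\rarrow G(W^{j+1})\rarrow0$ is exact in $\sA$. Splicing these short exact sequences together shows that the complex $0\rarrow G(K)\rarrow G(I^0)\rarrow G(I^1)\rarrow G(I^2)\rarrow\dotsb$ is exact; equivalently, $\boR^mG(K)=H^mG(I^\bu)=0$ for all $m\ge1$. This proves the acyclicity claim, and with it the lemma.

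I do not expect any real obstacle here; the only point requiring attention is the bookkeeping in the second step, ensuring that condition~(iv) is invoked only for short exact sequences all of whose terms lie in~$\sK$ — which is exactly what the syzygy induction provides. Note that condition~(i), closure under extensions, is not needed for this argument.
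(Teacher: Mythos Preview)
Your proof is correct and essentially identical to the paper's own argument: both show that every $K\in\sK$ is $G$-acyclic by splicing an injective coresolution of $K$ into short exact sequences whose syzygies lie in $\sK$ (using conditions~(iii) and~(ii)), applying condition~(iv) to each, and then invoking Lemma~\ref{acyclic-models}. Your remark that condition~(i) is not used in this argument is also accurate.
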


\begin{proof}
 This lemma is very well known.
 Given an object $K\in\sK$, consider its injective coresolution
$0\rarrow K\rarrow J^0\rarrow J^1\rarrow J^2\rarrow\dotsb$ in~$\sE$.
 Put $Z^0=K$, and denote by $Z^i$ the image of the morphism
$J^{i-1}\rarrow J^i$ for $i\ge1$.
 So the long exact sequence $0\rarrow K\rarrow J^\bu$ can be obtained
by splicing the short exact sequences $0\rarrow Z^i\rarrow J^i
\rarrow Z^{i+1}\rarrow 0$ in~$\sE$.
 Then, by~(iii), we have $J^i\in\sK$ for all $i\ge0$, and by~(ii),
it follows by induction that $Z^i\in\sK$ for all $i\ge0$.
 By~(iv), the short sequences $0\rarrow G(Z^i)\rarrow G(J^i)\rarrow
G(Z^{i+1})\rarrow0$ are exact in $\sA$ for all $i\ge0$.
 So the complex $0\rarrow G(K)\rarrow G(J^\bu)$ is exact in $\sA$,
and we have shown that $\boR^mG(K)=0$ for all $m>0$.
 Returning to the situation at hand, we have $\boR^mG(K^i)=0$
for all $i\ge0$ and $m>0$, and it remains to apply
Lemma~\ref{acyclic-models}.
\end{proof}

\subsection{Flasque quasi-coherent sheaves}
 Let $X$ be a topological space and $\Q$ be a sheaf of abelian groups
on~$X$.
 One says that the sheaf $\Q$ is \emph{flasque} if the restriction map
$\Q(U)\rarrow\Q(V)$ is surjective for every pair of open subsets
$V\subset U\subset X$.
 A quasi-coherent sheaf $\Q$ on a scheme $X$ is said to be flasque if
it is flasque as a sheaf of abelian groups.

\begin{lem}
 Let $X$ be a (locally) Noetherian scheme.
 Then the class of all flasque quasi-coherent sheaves is
coresolving in $X\Qcoh$.
\end{lem}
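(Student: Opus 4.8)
The plan is to check, for the class $\sK\subseteq\sE=X\Qcoh$ of flasque quasi-coherent sheaves, the three defining properties~(i)--(iii) of a coresolving class from Section~\ref{adjusted-classes-subsecn}. The essential preliminary observation is that the inclusion of $X\Qcoh$ into the category of sheaves of $\cO_X$\+modules (equivalently, of sheaves of abelian groups) on $X$ is exact: the kernel and cokernel of a morphism of quasi-coherent sheaves, formed in $\cO_X$\+modules, are again quasi-coherent. Consequently a short exact sequence in $X\Qcoh$ is also a short exact sequence of sheaves of abelian groups on $X$, and one is free to use the classical theory of flasque sheaves.

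With this in hand, for~(i) and~(ii) I would invoke the standard facts that an extension of a flasque sheaf by a flasque sheaf is flasque, and that the cokernel of a monomorphism between flasque sheaves is flasque; both follow from the acyclicity of flasque sheaves for the section functors $\Gamma(U,-)$ together with a short diagram chase over a pair of opens $V\subseteq U$ (lift a section of the quotient, lift it to the middle term, and correct the discrepancy by a section of the subsheaf); see, e.g., \cite{HarAG}. In the write-up I would either reproduce these short arguments or simply cite them.

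Property~(iii) --- that injective objects of $X\Qcoh$ are flasque --- is the only place where the (locally) Noetherian hypothesis is used and the only point requiring a nontrivial input, so I expect it to be the main obstacle. Here I would invoke the classical theorem that on a locally Noetherian scheme every injective object of $X\Qcoh$ is in fact an injective sheaf of $\cO_X$\+modules (this fails for non\+Noetherian $X$; see \cite{SP}), combined with the elementary fact that an injective sheaf of $\cO_X$\+modules $\I$ is flasque: for opens $V\subseteq U\subseteq X$ with open immersions $j_U$, $j_V$, the extensions by zero form a monomorphism $j_{V!}\cO_V\hookrightarrow j_{U!}\cO_U$ of $\cO_X$\+modules, so applying $\Hom_{\cO_X}(-,\I)$ and using $\Hom_{\cO_X}(j_{U!}\cO_U,\I)=\I(U)$ and the injectivity of $\I$ yields surjectivity of $\I(U)\rarrow\I(V)$. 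All the content resides in~(iii); everything else is formal. A variant of~(iii) avoiding the ``injective in $X\Qcoh$ $\Rightarrow$ injective sheaf of $\cO_X$\+modules'' statement would use the Matlis--Gabriel--Hartshorne description of injective quasi-coherent sheaves on a locally Noetherian scheme as direct sums of pushforwards of injective hulls of residue fields --- which are individually flasque --- together with the fact that on a Noetherian space arbitrary direct sums of flasque sheaves are flasque (since $\Gamma(U,-)$ commutes with direct sums for quasi-compact $U$); but the first route is shorter.
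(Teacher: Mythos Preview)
Your proposal is correct and follows essentially the same approach as the paper: reduce~(i) and~(ii) to the classical closure properties of flasque sheaves of abelian groups via the exactness of the inclusion $X\Qcoh\hookrightarrow\cO_X\Modl$, and for~(iii) invoke the theorem that on a locally Noetherian scheme injective quasi-coherent sheaves are injective as $\cO_X$\+modules (hence flasque). The paper cites Godement for~(i)--(ii) and \cite{HarRD}, \cite{EP} for~(iii), but the argument is the same as yours; your supplementary remarks (the explicit $j_!$ argument and the Matlis--Gabriel alternative) are correct elaborations not present in the paper's terse proof.
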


\begin{proof}
 The class of all flasque sheaves of abelian groups is closed under
extensions and cokernels of monomorphisms in the abelian category of
sheaves of abelian groups on any topological space $X$
by~\cite[Th\'eor\`eme and Corollaire~II.3.1.2]{God}.
 On any ringed space $(X,\cO_X)$, all injective sheaves of
$\cO_X$\+modules are flasque~\cite[Lemma~III.2.4]{HarAG}.
 On a locally Noetherian scheme $X$, all injective quasi-coherent
sheaves are injective as sheaves of
$\cO_X$\+modules~\cite[Theorem~II.7.18]{HarRD},
\cite[Theorem~A.3]{EP}; hence they are also flasque.
\end{proof}

\begin{lem} \label{flasqueness-preservation-by-direct-inverse-images}
 Let $f\:Y\rarrow X$ be a morphism of schemes.  Then \par
\textup{(a)} if the morphism~$f$ is quasi-compact and quasi-separated,
then the direct image functor $f_*\:Y\Qcoh\rarrow X\Qcoh$ preserves
flasqueness of quasi-coherent sheaves; \par
\textup{(b)} if the morphism~$f$ is an open immersion, then
the inverse image functor $f^*\:X\Qcoh\rarrow Y\Qcoh$ preserves
flasqueness of quasi-coherent sheaves.
\end{lem}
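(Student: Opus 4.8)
The plan is to prove both parts by reducing flasqueness to a statement about sheaves of abelian groups, since flasqueness of a quasi-coherent sheaf is by definition just flasqueness of the underlying sheaf of abelian groups, and then to invoke the corresponding well-known facts at the level of ringed-space or topological-space sheaves.

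For part~(a), first I would recall that for a quasi-compact quasi-separated morphism $f\:Y\rarrow X$ the direct image functor $f_*\:Y\Qcoh\rarrow X\Qcoh$ is computed by the same formula as the sheaf-theoretic pushforward of the underlying sheaf of abelian groups; that is, if $\N\in Y\Qcoh$ and $\cO_Y$-$\mathrm{mod}$ denotes sheaves of modules, then the underlying sheaf of abelian groups of $f_*\N$ agrees with the pushforward of the underlying sheaf of $\N$ along the continuous map of topological spaces. (This is where quasi-compactness and quasi-separatedness enter: they guarantee that $f_*$ of a quasi-coherent sheaf is again quasi-coherent and that no sheafification is needed on the level of underlying abelian sheaves.) Then I would simply observe that the topological pushforward along any continuous map preserves flasqueness: for open $V\subset U\subset X$, the restriction map $(f_*\N)(U)\rarrow(f_*\N)(V)$ is the map $\N(f^{-1}U)\rarrow\N(f^{-1}V)$, which is surjective because $\N$ is flasque and $f^{-1}V\subset f^{-1}U$ are open in~$Y$. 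This completes~(a).

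For part~(b), when $f\:Y\rarrow X$ is an open immersion, the inverse image $f^*\M$ of a quasi-coherent sheaf $\M$ is just the restriction $\M|_Y$, whose underlying sheaf of abelian groups is the ordinary restriction of the underlying sheaf of $\M$ to the open subspace~$Y\subset X$. Flasqueness is evidently inherited by restriction to an open subspace: for open $V\subset U\subset Y$, these are also open in~$X$, and the restriction map $\M(U)\rarrow\M(V)$ is surjective since $\M$ is flasque on~$X$. I expect the only point requiring care, and thus the main (minor) obstacle, is making the identification in part~(a) of the underlying abelian sheaf of the quasi-coherent direct image with the topological pushforward precise under the quasi-compact quasi-separated hypothesis; once that identification is granted, both parts are immediate, and I would cite the standard references (e.g.~\cite[Section~II.5]{HarAG} together with the discussion of quasi-coherence of direct images in~\cite[Appendix~B]{TT} or~\cite{SP}) rather than reprove it.
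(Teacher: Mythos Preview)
Your proof is correct and follows the same approach as the paper, which simply remarks that both assertions are obvious because direct images and restrictions to open subsets preserve flasqueness of sheaves of abelian groups. Your one unnecessary worry is the ``identification'' in part~(a): the pushforward of a sheaf of $\cO$-modules along any morphism of ringed spaces is always given by $(f_*\N)(U)=\N(f^{-1}U)$ with no sheafification whatsoever, so the quasi-compact quasi-separated hypothesis is needed only to ensure that $f_*\N$ is again quasi-coherent (so that the functor lands in $X\Qcoh$), not for any compatibility at the level of underlying abelian sheaves.
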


\begin{proof}
 Both assertions are obvious.
 The point is that the direct images and restrictions to open subsets
preserve flasqueness of sheaves of abelian groups.
\end{proof}

\begin{lem} \label{flasque-adjusted-to-direct-images}
 Let $f\:Y\rarrow X$ be a morphism of Noetherian schemes.
 Then the coresolving class of flasque quasi-coherent sheaves on $Y$
is adjusted to the left exact functor of direct image
$f_*\:Y\Qcoh\rarrow X\Qcoh$.
\end{lem}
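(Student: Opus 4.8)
The plan is to reduce condition~(iv) from Section~\ref{adjusted-classes-subsecn} for the functor $G=f_*$ to the classical fact that flasque sheaves of abelian groups are acyclic for direct image functors, so that a short exact sequence of abelian sheaves whose leftmost term is flasque stays exact after applying~$f_*$. The only genuinely schematic input will be the identification of the quasi-coherent direct image with the sheaf-theoretic one; everything else is a bookkeeping assembly of standard facts, so I do not anticipate a real obstacle.

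First I would carry out the reduction to sheaves of abelian groups. Since $Y$ is a Noetherian scheme, its underlying space is a Noetherian topological space, so all of its open subsets are quasi-compact; hence the morphism~$f$ is automatically quasi-compact and quasi-separated. Consequently the direct image of a quasi-coherent sheaf on~$Y$ computed in the category of sheaves of $\cO_Y$\+modules (or even just in the category of sheaves of abelian groups on~$Y$) is again quasi-coherent and coincides with the quasi-coherent direct image $f_*\:Y\Qcoh\rarrow X\Qcoh$. Moreover, the forgetful functor from $X\Qcoh$ to sheaves of abelian groups on~$X$ is exact and reflects exactness of sequences (a complex of quasi-coherent sheaves is exact in $X\Qcoh$ if and only if it is exact as a complex of sheaves of abelian groups), and similarly over~$Y$. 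So it suffices to prove the following: for every short exact sequence $0\rarrow K'\rarrow K\rarrow K''\rarrow0$ of sheaves of abelian groups on~$Y$ with $K'$ flasque (which holds whenever $K'$, $K$, $K''$ underlie flasque quasi-coherent sheaves), the sequence $0\rarrow f_*K'\rarrow f_*K\rarrow f_*K''\rarrow0$ of sheaves of abelian groups on~$X$ is exact.

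Then I would do the (routine) sheaf-theoretic computation. The functor~$f_*$ is left exact, so only right exactness is at issue. For an open subset $V\subset X$, put $W=f^{-1}(V)\subset Y$; restricting the given short exact sequence to~$W$ and taking sections over~$W$ produces an exact sequence of abelian groups $0\rarrow K'(W)\rarrow K(W)\rarrow K''(W)\rarrow0$, where surjectivity on the right holds because $K'|_W$ is again flasque (Lemma~\ref{flasqueness-preservation-by-direct-inverse-images}(b)) and a flasque subsheaf keeps the section sequence exact, by~\cite[Th\'eor\`eme and Corollaire~II.3.1.2]{God}. Since $(f_*\G)(V)=\G(W)$ for any sheaf $\G$ on~$Y$, this says that $0\rarrow(f_*K')(V)\rarrow(f_*K)(V)\rarrow(f_*K'')(V)\rarrow0$ is exact for every open $V\subset X$, and hence the sequence of sheaves $0\rarrow f_*K'\rarrow f_*K\rarrow f_*K''\rarrow0$ on~$X$ is exact.

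Finally I would combine this with the fact, established above in this subsection, that the flasque quasi-coherent sheaves on~$Y$ form a coresolving class in $Y\Qcoh$: together with the exactness just proved, this is precisely the statement that this coresolving class is adjusted to~$f_*$. If a numerical or categorical subtlety were to arise, it would be exactly at the point where one passes between the quasi-coherent and the abelian-sheaf direct images — but for quasi-compact quasi-separated~$f$, and in particular for any morphism of Noetherian schemes, these coincide, so the argument goes through without difficulty.
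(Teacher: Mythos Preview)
Your proof is correct and follows essentially the same approach as the paper's: both reduce condition~(iv) to the classical Godement fact that flasque sheaves of abelian groups are adjusted to the section functors $\Gamma(W,{-})$, using that $(f_*\G)(V)=\G(f^{-1}(V))$. The paper compresses this into a single sentence citing~\cite[Th\'eor\`eme~II.3.1.2]{God}, whereas you spell out explicitly the reduction from the quasi-coherent direct image to the abelian-sheaf one and the passage from sectionwise exactness to exactness of sheaves; this extra bookkeeping is sound and arguably clarifying, but not a different argument.
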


\begin{proof}
 It suffices to observe that the coresolving class of flasque sheaves
of abelian groups on any topological space is adjusted to the functor
of sections over every open subset~\cite[Th\'eor\`eme~II.3.1.2]{God}.
\end{proof}

\begin{lem} \label{flasque-dimension-Krull-dimension}
 Let $X$ be a Noetherian scheme of finite Krull dimension~$D$.
 Then the coresolution dimension of any quasi-coherent sheaf on $X$
with respect to the coresolving subcategory of flasque quasi-coherent
sheaves does not exceed~$D$.
\end{lem}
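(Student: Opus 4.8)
The plan is to exhibit, for every quasi-coherent sheaf $\M$ on $X$, a length-$D$ coresolution $0\rarrow\M\rarrow\Q^0\rarrow\Q^1\rarrow\dotsb\rarrow\Q^{D-1}\rarrow Z\rarrow0$ by flasque quasi-coherent sheaves. Since the class of flasque quasi-coherent sheaves is coresolving in $X\Qcoh$ and the Grothendieck category $X\Qcoh$ has enough injective objects, an arbitrary quasi-coherent sheaf $\M$ admits a (possibly infinite) coresolution $0\rarrow\M\rarrow\Q^0\rarrow\Q^1\rarrow\Q^2\rarrow\dotsb$ with all the $\Q^i$ flasque and quasi-coherent. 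Truncating at step~$D$, it remains to show that the $D$\+th cosyzygy $Z$ of such a coresolution --- which is again quasi-coherent, and fits into an exact sequence $0\rarrow\M\rarrow\Q^0\rarrow\dotsb\rarrow\Q^{D-1}\rarrow Z\rarrow0$ --- is flasque. (For $D=0$ this reduces to the observation that a $0$\+dimensional Noetherian scheme is discrete, so every sheaf on it is flasque.)

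The heart of the argument is a dimension shift carried out over each open subscheme. Fix an open immersion $j\:U\hookrightarrow X$; since $X$ is Noetherian, $U$ is a Noetherian scheme, and its Krull dimension is at most~$D$. Restricting the truncated coresolution along~$j$ keeps it exact, and it remains a coresolution of $\M|_U$ by flasque quasi-coherent sheaves on~$U$ by Lemma~\ref{flasqueness-preservation-by-direct-inverse-images}(b). As flasque sheaves of abelian groups are acyclic for the global sections functor $\Gamma(U,-)$, splicing the truncated coresolution into short exact sequences (which stay exact as sequences of abelian sheaves) and iterating the long exact cohomology sequences over~$U$ gives natural isomorphisms $H^m(U,Z|_U)\simeq H^{m+D}(U,\M|_U)$ for every $m\ge1$. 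By Grothendieck's vanishing theorem~\cite[Theorem~III.2.7]{HarAG}, the group $H^{m+D}(U,\M|_U)$ vanishes for $m+D>D$, i.e., for all $m\ge1$. Hence $H^m(U,Z|_U)=0$ for all $m\ge1$ and all open subschemes $U\subseteq X$.

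It remains to deduce that $Z$ is flasque from the vanishing of all its higher cohomology over every open subscheme. This is the classical fact that, on a Noetherian scheme --- more generally, on a Noetherian topological space of finite dimension --- a sheaf of abelian groups all of whose higher cohomology groups vanish over every open subset is flasque; equivalently, the flasque dimension of an arbitrary sheaf of abelian groups on such a space does not exceed its dimension. It goes back to Grothendieck; see~\cite{GrToh} and~\cite{God}. (If one prefers to isolate the abelian-sheaf statement, the passage to quasi-coherent $Z$ is then immediate from the independence of the coresolution dimension of the choice of coresolution --- \cite[Lemma~2.1]{Zhu}, \cite[Corollary~A.5.2]{Pcosh}: the flasque dimension of $\M$ as a sheaf of abelian groups being at most~$D$, the $D$\+th cosyzygy $Z$ of any coresolution of $\M$ by flasque sheaves of abelian groups, in particular of our flasque quasi-coherent one, is flasque.) Since $Z$ is flasque and quasi-coherent, it lies in the coresolving class of flasque quasi-coherent sheaves, and the required coresolution of $\M$ has been constructed.

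The genuine obstacle, as I see it, is the last step: inferring flasqueness from cohomological acyclicity over all open subschemes. This implication really uses the Noetherianity of $X$, not merely the finiteness of its Krull dimension --- for general topological spaces the naive ``acyclic over all opens $\Rightarrow$ flasque'' is false --- and it relies on Grothendieck's cohomological-dimension theory rather than on a soft diagram chase. Everything preceding it is a routine dimension shift combined with facts already recorded in this section.
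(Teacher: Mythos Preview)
Your detour through cohomological vanishing leads to a false step. The implication ``$H^m(U,\G)=0$ for all $m\ge1$ and all open $U\subseteq X$ implies $\G$ is flasque'' is \emph{not} true on Noetherian spaces, and it is not equivalent to the flasque-dimension bound you state alongside it. A concrete counterexample: take $X=\mathbb A^1_k$ and $\G=\cO_X$. Every open subset of $\mathbb A^1_k$ is affine, so $H^m(U,\cO_X|_U)=0$ for all $m\ge1$ by Serre's theorem; yet the restriction map $k[x]\rarrow k[x,x^{-1}]$ is not surjective, so $\cO_X$ is not flasque. Thus the dimension-shift argument, which only establishes acyclicity of $Z$ over every open, cannot by itself conclude that $Z$ is flasque. (One direction of your claimed equivalence does hold: if every sheaf acyclic over all opens were flasque, the flasque-dimension bound would follow; but the converse fails.)

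What does work is precisely your parenthetical alternative, and that is exactly the paper's proof. One invokes directly the abelian-sheaf statement that on a Noetherian topological space of combinatorial dimension~$D$ every sheaf of abelian groups has flasque coresolution dimension at most~$D$ (a form of Grothendieck's vanishing theorem, \cite[Th\'eor\`eme~3.6.5]{GrToh}; see also~\cite[Lemma~3.4.7(a)]{Pcosh}). Then, since coresolution dimension is independent of the chosen coresolution, the $D$\+th cosyzygy $Z$ in your flasque \emph{quasi-coherent} coresolution of $\M$ must be flasque. So your parenthetical is the proof; the cohomological-acyclicity route is a red herring and should be dropped.
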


\begin{proof}
 The point is that, on any Noetherian topological space of
combinatorial dimension~$D$, the coresolution dimension of any
sheaf of abelian groups with respect to the coresolving subcategory
of flasque sheaves of abelian groups does not exceed~$D$.
 This is a version of the Grothendieck vanishing
theorem~\cite[Th\'eor\`eme~3.6.5]{GrToh};
see~\cite[Lemma~3.4.7(a)]{Pcosh}.
\end{proof}

\subsection{Derived functors of direct image}
 Given a quasi-compact quasi-separated morphism of schemes
$f\:Y\rarrow X$, we denote by $\boR^*f_*$ the right derived functors
of the left exact functor of direct image $f_*\:Y\Qcoh\rarrow X\Qcoh$
(constructed, as usual, in terms of injective coresolutions).
 Similarly, we denote by $\boR^*\Gamma(X,{-})$ the right derived
functors of the functor of global sections $\Gamma(X,{-})\:X\Qcoh
\rarrow\sAb$ taking values in the category of abelian groups~$\sAb$.

\begin{lem} \label{flat-affine-direct-image-global-sections}
 Let $f\:Y\rarrow X$ be a flat affine morphism of schemes.
 Then, for every quasi-coherent sheaf\/ $\N$ on $Y$, there are
natural isomorphisms of abelian groups
$$
 \boR^m\Gamma(X,f_*\N)\simeq\boR^m\Gamma(Y,\N).
$$
 In particular, if\/ $\boR^m\Gamma(Y,\N)=0$ for all $m\ge1$, then
also\/ $\boR^m\Gamma(X,f_*\N)=0$ for all $m\ge1$.
\end{lem}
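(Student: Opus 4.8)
The plan is to repeat the proof of Lemma~\ref{flat-affine-direct-image-lemma} essentially verbatim, with the infinite product functor $\prod_{\lambda\in\Lambda}$ replaced by the global sections functor $\Gamma(X,{-})$. The key underlying observation is the tautological identity of functors
$$
 \Gamma(X,f_*\N)=(f_*\N)(X)=\N(f^{-1}(X))=\N(Y)=\Gamma(Y,\N),
$$
so that $\Gamma(X,{-})\circ f_*=\Gamma(Y,{-})$ as functors $Y\Qcoh\rarrow\sAb$.

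Next I would recall the two relevant properties of the direct image functor already used in the proof of Lemma~\ref{flat-affine-direct-image-lemma}: the functor $f_*\:Y\Qcoh\rarrow X\Qcoh$ is exact, because the morphism~$f$ is affine; and $f_*$ takes injective objects of $Y\Qcoh$ to injective objects of $X\Qcoh$, because $f_*$ is right adjoint to the inverse image functor~$f^*$, which is exact for the flat morphism~$f$. (The preservation of infinite products, which was the third ingredient there, plays no role here.)

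With these in hand, the argument is immediate. Choose an injective coresolution $0\rarrow\N\rarrow\J^\bu$ in $Y\Qcoh$. By exactness of $f_*$ and the fact that $f_*$ preserves injectivity, the complex $0\rarrow f_*\N\rarrow f_*\J^\bu$ is an injective coresolution of $f_*\N$ in $X\Qcoh$. Therefore
$$
 \boR^m\Gamma(X,f_*\N)=H^m\bigl(\Gamma(X,f_*\J^\bu)\bigr)
 \simeq H^m\bigl(\Gamma(Y,\J^\bu)\bigr)=\boR^m\Gamma(Y,\N),
$$
where the middle isomorphism comes from the identity of complexes $\Gamma(X,f_*\J^\bu)=\Gamma(Y,\J^\bu)$ noted above, and these isomorphisms are visibly natural in~$\N$. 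The final assertion of the lemma is the special case in which all cohomology in positive degrees vanishes.

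There is essentially no real obstacle here: the only point that requires (and uses) the hypotheses is that $f_*$ be \emph{simultaneously} exact and injectivity-preserving, and these two properties follow respectively from affineness and flatness of~$f$. Should one prefer a more structural formulation, one can instead invoke the Grothendieck spectral sequence of the composition $\Gamma(X,{-})\circ f_*=\Gamma(Y,{-})$, which degenerates because $f_*$ is exact and carries injectives to $\Gamma(X,{-})$\+acyclic objects; or, alternatively, one can apply Lemma~\ref{acyclic-models} to the functor $G=\Gamma(X,{-})$ and the coresolution $f_*\J^\bu$ of $f_*\N$ by $\Gamma(X,{-})$\+acyclic objects.
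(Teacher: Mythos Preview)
Your proof is correct and follows exactly the approach of the paper: use the identity $\Gamma(X,f_*{-})=\Gamma(Y,{-})$, the exactness of $f_*$ (from affineness), and the preservation of injectives by $f_*$ (from flatness) to transport an injective coresolution of $\N$ to one of $f_*\N$. The paper's proof is terser but identical in substance, simply pointing to Lemma~\ref{flat-affine-direct-image-lemma} and listing the same three ingredients.
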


\begin{proof}
 This is similar to (but simpler than)
Lemma~\ref{flat-affine-direct-image-lemma}.
 One uses the observations that there is a natural isomorphism
$\Gamma(X,f_*\N)\simeq\Gamma(Y,\N)$, and the functor~$f_*$ is exact
and takes injective objects to injective objects.
\end{proof}

\begin{lem} \label{homological-dimension-of-global-sections}
 Let $X$ be a quasi-compact semi-separated scheme with an affine
open covering $X=\bigcup_{\alpha=1}^N U_\alpha$.
 Then for every quasi-coherent sheaf $\M$ on $X$ one has\/
$\boR\Gamma^i(X,\M)=0$ for all $i>N-1$.
\end{lem}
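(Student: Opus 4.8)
The plan is to imitate the proof of Theorem~\ref{roos-axiom-cech-coresolution-semi-separated}, replacing the derived functors $\prod^{(m)}_{\lambda\in\Lambda}$ of infinite product by the derived functors $\boR^m\Gamma(X,{-})$ of global sections. For each $1\le r\le N$, I would set
$$
 U_r=\coprod\nolimits_{1\le\alpha_1<\dotsb<\alpha_r\le N}
 U_{\alpha_1}\cap\dotsb\cap U_{\alpha_r}.
$$
Since $X$ is semi-separated, each of the finitely many intersections $U_{\alpha_1}\cap\dotsb\cap U_{\alpha_r}$ is affine; hence $U_r$, being a finite disjoint union of affine schemes, is affine, and the natural morphism $f_r\:U_r\rarrow X$ is flat and affine.

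Next I would apply Lemma~\ref{cech-coresolution-lemma} to the quasi-coherent sheaf $\M$ and collect the finite direct sums occurring in the \v Cech coresolution~\eqref{cech-coresolution} into single direct images, rewriting it as a coresolution
$$
 0\lrarrow\M\lrarrow f_1{}_*\N_1\lrarrow\dotsb\lrarrow f_N{}_*\N_N\lrarrow0
$$
for suitable quasi-coherent sheaves $\N_r$ on the affine schemes $U_r$. Because $U_r$ is affine, higher cohomology of quasi-coherent sheaves on $U_r$ vanishes, so $\boR^m\Gamma(U_r,\N_r)=0$ for all $m\ge1$; by Lemma~\ref{flat-affine-direct-image-global-sections} it follows that $\boR^m\Gamma(X,f_r{}_*\N_r)=0$ for all $m\ge1$ and $1\le r\le N$. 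Thus every term of the coresolution above is acyclic for the functor $G=\Gamma(X,{-})\:X\Qcoh\rarrow\sAb$, and Lemma~\ref{acyclic-models} (applied with $\sE=X\Qcoh$, $\sA=\sAb$, and $G=\Gamma(X,{-})$) identifies $\boR^i\Gamma(X,\M)$ with $H^iG(K^\bu)$, where $K^\bu$ denotes the complex $f_1{}_*\N_1\rarrow\dotsb\rarrow f_N{}_*\N_N$ placed in cohomological degrees $0$ through $N-1$. Since $K^\bu$ vanishes in degrees $>N-1$, I conclude that $\boR^i\Gamma(X,\M)=0$ for all $i>N-1$.

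I do not expect any genuine obstacle here: the argument is essentially a word-for-word transcription of the proof of Theorem~\ref{roos-axiom-cech-coresolution-semi-separated} with $\prod^{(m)}$ replaced by $\boR^m\Gamma(X,{-})$, and the single substantive input is the affineness of the finite intersections $U_{\alpha_1}\cap\dotsb\cap U_{\alpha_r}$, which is exactly what semi-separatedness of $X$ guarantees. Alternatively, one could skip the packaging into the morphisms $f_r$ and work directly with the direct sums in~\eqref{cech-coresolution}, using that $\boR^m\Gamma(X,{-})$ commutes with finite direct sums and that $j_{\alpha_1,\dotsc,\alpha_r}{}_*$ of a quasi-coherent sheaf on an affine open subscheme is $\Gamma(X,{-})$\+acyclic; but collecting the summands into a single flat affine morphism keeps the exposition uniform with the earlier theorem.
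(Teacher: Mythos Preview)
Your proposal is correct and follows essentially the same route as the paper: the paper also packages the \v Cech coresolution of~$\M$ into the form $0\rarrow\M\rarrow f_1{}_*\N_1\rarrow\dotsb\rarrow f_N{}_*\N_N\rarrow0$ using the notation from Theorem~\ref{roos-axiom-cech-coresolution-semi-separated}, invokes exactness of $\Gamma(U_r,{-})$ on the affine schemes~$U_r$, applies Lemma~\ref{flat-affine-direct-image-global-sections} to get $\boR^m\Gamma(X,f_r{}_*\N_r)=0$ for $m\ge1$, and finishes with Lemma~\ref{acyclic-models}.
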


\begin{proof}
 This is very well known.
 We use the notation from the proof of
Theorem~\ref{roos-axiom-cech-coresolution-semi-separated}.
 By Lemma~\ref{cech-coresolution-lemma}, the quasi-coherent sheaf $\M$
has the \v Cech coresolution~\eqref{cech-coresolution} of the form
$0\rarrow\M\rarrow f_1{}_*\N_1\rarrow\dotsb\rarrow f_N{}_*\N_N\rarrow0$
for suitable quasi-coherent sheaves $\N_r$ on the schemes~$U_r$.
 The functors of global sections $\Gamma(U_r,{-})$ on affine schemes
$U_r$ are exact, so by
Lemma~\ref{flat-affine-direct-image-global-sections} we have
$\boR^m\Gamma(X,f_r{}_*\N_r)=0$ for all $m\ge1$ and $1\le r\le N$.
 It remains to refer to Lemma~\ref{acyclic-models}.
\end{proof}

\begin{lem} \label{homological-dimension-of-direct-image}
 Let $f\:Y\rarrow X$ be a morphism of schemes and
$X=\bigcup_\alpha U_\alpha$ be an affine open covering of~$X$.
 Assume that the scheme $Y$ is semi-separated and Noetherian and,
for every index~$\alpha$, the scheme $f^{-1}(U_\alpha)=
U_\alpha\times_XY$ can be covered by $d+1$ affine open subschemes
(where $d\ge0$ is an integer).
 Then, for every quasi-coherent sheaf $\N$ on $Y$, one has\/
$\boR^kf_*(\N)=0$ for all $k>d$.
\end{lem}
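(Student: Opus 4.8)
The plan is to reduce the vanishing of $\boR^k f_*(\N)$ to the computation of local cohomology, i.e.\ to apply Lemma~\ref{homological-dimension-of-global-sections} on the pieces of the covering. Recall that the derived direct image is compatible with restriction to open subschemes of the base: for an affine open $U_\alpha\subset X$ with open immersion $j_\alpha\:U_\alpha\rarrow X$, one has $j_\alpha^*\boR^k f_*(\N)\simeq\boR^k(f|_{f^{-1}(U_\alpha)})_*(\N|_{f^{-1}(U_\alpha)})$, where $f|_{f^{-1}(U_\alpha)}\:f^{-1}(U_\alpha)\rarrow U_\alpha$ is the restriction of~$f$. (This uses that $j_\alpha^*$ is exact and takes injective quasi-coherent sheaves on $X$ to injective ones on $U_\alpha$, together with the base-change compatibility of~$f_*$ for the Cartesian square formed by $j_\alpha$ and~$f$.) Since a quasi-coherent sheaf on $X$ vanishes iff all its restrictions $j_\alpha^*$ to the members of an affine open covering vanish, it suffices to prove the statement after replacing $X$ by $U_\alpha$ and $Y$ by $f^{-1}(U_\alpha)$. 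Thus I may assume $X$ itself is affine; then $\boR^k f_*(\N)$ is the quasi-coherent sheaf on $X$ associated with the $\Gamma(X,\cO_X)$\+module $\boR^k\Gamma(Y,\N)$, because on an affine base the direct image is the composition of global sections on $Y$ with the (exact, fully faithful) equivalence between $\Gamma(X,\cO_X)$\+modules and quasi-coherent sheaves on~$X$.

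With $X$ affine, the hypothesis says that $Y$ is a semi-separated Noetherian scheme covered by $d+1$ affine open subschemes. Now Lemma~\ref{homological-dimension-of-global-sections}, applied to the scheme $Y$ with its affine open covering of cardinality $N=d+1$, yields $\boR^i\Gamma(Y,\N)=0$ for all $i>d$ and every quasi-coherent sheaf $\N$ on~$Y$. Passing back through the equivalence of categories, $\boR^k f_*(\N)=0$ for all $k>d$, as desired.

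The only genuine point to be careful about is the base-change isomorphism $j_\alpha^*\boR^k f_*(\N)\simeq\boR^k(f|_{f^{-1}(U_\alpha)})_*(\N|_{f^{-1}(U_\alpha)})$ on the level of derived functors in the abelian categories of quasi-coherent sheaves. This is where the semi-separatedness (and, in the Noetherian alternative, the Noetherian property) of $Y$ is used: under either hypothesis the derived category of quasi-coherent sheaves is well-behaved, and one can compute $\boR^\bu f_*$ either via injective coresolutions in $Y\Qcoh$ or — by Lemmas~\ref{adjusted-class-lemma}, \ref{flasque-adjusted-to-direct-images}, and \ref{flasque-dimension-Krull-dimension}, or simply by an ordinary \v Cech coresolution associated with an affine open covering of~$Y$ — via flasque (or \v Cech) coresolutions, which restrict well to open subschemes of the base. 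Concretely, I would resolve $\N$ by a \v Cech complex $\N\rarrow g_\bu{}_*(\N|_\bu)$ attached to the $(d+1)$\+element affine covering of~$Y$; since $X$ is affine and each term $g_r{}_*\N_r$ is $\Gamma(Y,-)$\+acyclic by Lemma~\ref{flat-affine-direct-image-global-sections} (equivalently, $f_*$\+acyclic), Lemma~\ref{acyclic-models} computes $\boR^\bu f_*(\N)$ from this length\+$d$ complex, giving the bound directly and making the restriction compatibility transparent, as $j_\alpha^*$ commutes with the whole construction.
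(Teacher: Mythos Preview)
Your overall strategy matches the paper's: localize on the base to reduce to affine $X$, then invoke Lemma~\ref{homological-dimension-of-global-sections}.  The paper's proof is simply a compressed version of this, phrased in terms of sections: from the identity $\Gamma(U_\alpha,f_*(\N)|_{U_\alpha})\simeq\Gamma(f^{-1}(U_\alpha),\N|_{f^{-1}(U_\alpha)})$ it derives $\Gamma(U_\alpha,\boR^m f_*(\N)|_{U_\alpha})\simeq\boR^m\Gamma(f^{-1}(U_\alpha),\N|_{f^{-1}(U_\alpha)})$ and then applies the global-sections lemma.

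There is, however, one genuine slip in your justification.  In the parenthetical you write that the base-change isomorphism ``uses that $j_\alpha^*$ is exact and takes injective quasi-coherent sheaves on $X$ to injective ones on $U_\alpha$''.  The second clause is irrelevant: the injective coresolution lives on $Y$, not on~$X$.  What you actually need is that restriction from $Y$ to the open subscheme $f^{-1}(U_\alpha)$ preserves injectivity (or at least $f_*$\+acyclicity) of quasi-coherent sheaves.  That is precisely the fact the paper singles out as the crux of the proof: on a \emph{Noetherian} scheme $Y$, injective quasi-coherent sheaves restrict to injective quasi-coherent sheaves on open subschemes~\cite[Theorem~II.7.18]{HarRD}, \cite[Theorem~A.3]{EP}.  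You do recover this in your final paragraph (via the flasque or \v Cech alternatives, which also work), but the initial parenthetical as written is misleading and should be replaced by the correct statement about restriction on~$Y$.
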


\begin{proof}
 The point is that the restrictions of injective quasi-coherent sheaves
on a Noetherian scheme $Y$ to its open subschemes $V\subset Y$ are
injective quasi-coherent sheaves on~$V$ \,\cite[Theorem~II.7.18]{HarRD},
\cite[Theorem~A.3]{EP}.
 Consequently, the natural isomorphism
$$
 \Gamma(U,f_*(\N)|_U)\simeq\Gamma(f^{-1}(U),\N|_{f^{-1}(U)})
$$
implies
$$
 \Gamma(U,\boR^mf_*(\N)|_U)\simeq
 \boR^m\Gamma(f^{-1}(U),\N|_{f^{-1}(U)})
$$
for all $m\ge0$, \ $\N\in Y\Qcoh$, and affine open subschemes
$U\subset X$.
 It remains to take $U=U_\alpha$ and apply
Lemma~\ref{homological-dimension-of-global-sections} to the scheme
$f^{-1}(U_\alpha)$, the sheaf $\M=\N|_{f^{-1}(U_\alpha)}$, and
the integer $N=d+1$.
\end{proof}

\subsection{Direct images under flat morphisms}
 The following lemma is a weak version of
Lemma~\ref{flat-affine-direct-image-lemma} for nonaffine morphisms~$f$.

\begin{lem} \label{flat-nonaffine-direct-image-lemma}
 Let $f\:Y\rarrow X$ be a flat morphism of schemes and $t$, $d\ge0$
be two integers.
 Let\/ $\Lambda$ be a set and
$(\Q_\lambda\in Y\Qcoh)_{\lambda\in\Lambda}$ be a family of
quasi-coherent sheaves on $Y$ such that\/ $\boR^mf_*\Q_\lambda=0$
for all $\lambda\in\Lambda$ and $m\ge1$.
 Assume that\/
$\prod_{\lambda\in\Lambda}^{(l)}\Q_\lambda=0$ in $Y\Qcoh$ for all $l>t$. 
 Suppose further that\/ $\boR^kf_*\M=0$ for all quasi-coherent
sheaves $\M$ on $Y$ and all $k>d$.
 Then\/ $\prod_{\lambda\in\Lambda}^{(i)}f_*\Q_\lambda=0$ in $X\Qcoh$
for all $i>t+d$.
\end{lem}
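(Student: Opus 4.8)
The plan is to argue by induction on the integer~$t$, reducing the general case to the case $t=0$ by a dimension-shifting argument in the spirit of Lemmas~\ref{acyclic-models} and~\ref{weakly-adjusted-objects-lemma}; the argument refines the one used for Lemma~\ref{flat-affine-direct-image-lemma}. First I would record the three preservation facts that make everything run. Since $f$ is flat, the inverse image functor $f^*$ is exact, so its right adjoint $f_*$ takes injective quasi-coherent sheaves on $Y$ to injective quasi-coherent sheaves on $X$; being a right adjoint, $f_*$ also preserves infinite products; and arbitrary products of injective objects are injective in any Grothendieck category, in particular in $Y\Qcoh$ and $X\Qcoh$, so an injective object of $(Y\Qcoh)^\Lambda$ or $(X\Qcoh)^\Lambda$ is precisely a $\Lambda$-indexed family of injectives.

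For the base case $t=0$ one has $\prod^{(l)}_{\lambda\in\Lambda}\Q_\lambda=0$ for all $l\ge1$. Choose an injective coresolution $0\rarrow\Q_\lambda\rarrow\J^\bu_\lambda$ of each $\Q_\lambda$ in $Y\Qcoh$. Because $\boR^mf_*\Q_\lambda=0$ for $m\ge1$ and $f_*$ preserves injectives, the complex $0\rarrow f_*\Q_\lambda\rarrow f_*\J^\bu_\lambda$ is an injective coresolution of $f_*\Q_\lambda$ in $X\Qcoh$, so $(f_*\J^\bu_\lambda)_{\lambda\in\Lambda}$ is an injective coresolution of the family $(f_*\Q_\lambda)_{\lambda\in\Lambda}$ in $(X\Qcoh)^\Lambda$ and hence $\prod^{(i)}_{\lambda\in\Lambda}f_*\Q_\lambda=H^i\bigl(\prod_\lambda f_*\J^\bu_\lambda\bigr)$. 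On the other hand $\prod_\lambda\J^\bu_\lambda$ is a complex of injective quasi-coherent sheaves on $Y$ whose cohomology $\prod^{(*)}_{\lambda\in\Lambda}\Q_\lambda$ vanishes in positive degrees, so $0\rarrow\prod_\lambda\Q_\lambda\rarrow\prod_\lambda\J^\bu_\lambda$ is an injective coresolution; since $f_*$ commutes with products this gives $H^i\bigl(\prod_\lambda f_*\J^\bu_\lambda\bigr)=H^i\bigl(f_*\prod_\lambda\J^\bu_\lambda\bigr)=\boR^if_*\bigl(\prod_\lambda\Q_\lambda\bigr)$, which is $0$ for $i>d$ by hypothesis. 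This settles $t=0$.

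For the inductive step, assume the statement for $t-1$ (with $t\ge1$) and any flat morphism. For each $\lambda$ take a short exact sequence $0\rarrow\Q_\lambda\rarrow\J^0_\lambda\rarrow\Q'_\lambda\rarrow0$ in $Y\Qcoh$ with $\J^0_\lambda$ injective. Dimension shifting along the long exact sequences of $\boR^*f_*$ (using $f_*$-acyclicity of injectives and $\boR^mf_*\Q_\lambda=0$ for $m\ge1$) gives $\boR^mf_*\Q'_\lambda=0$ for all $m\ge1$; dimension shifting along the long exact sequences of the derived products (using that $(\J^0_\lambda)_\lambda$ is injective in $(Y\Qcoh)^\Lambda$) gives $\prod^{(l)}_{\lambda\in\Lambda}\Q'_\lambda\simeq\prod^{(l+1)}_{\lambda\in\Lambda}\Q_\lambda=0$ for $l>t-1$. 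Thus $(\Q'_\lambda)_\lambda$ satisfies the hypotheses with parameters $t-1$ and $d$, so by induction $\prod^{(i)}_{\lambda\in\Lambda}f_*\Q'_\lambda=0$ for $i>(t-1)+d$. Finally, since $\boR^1f_*\Q_\lambda=0$, applying $f_*$ keeps the sequence exact, giving a short exact sequence $0\rarrow(f_*\Q_\lambda)_\lambda\rarrow(f_*\J^0_\lambda)_\lambda\rarrow(f_*\Q'_\lambda)_\lambda\rarrow0$ in $(X\Qcoh)^\Lambda$ whose middle term is injective there; its long exact sequence of derived products yields $\prod^{(i)}_{\lambda\in\Lambda}f_*\Q_\lambda\simeq\prod^{(i-1)}_{\lambda\in\Lambda}f_*\Q'_\lambda$ for $i\ge2$, and the right side vanishes for $i-1>(t-1)+d$, i.e.\ for $i>t+d$ (and $i\ge2$ is automatic there, as $t\ge1$).

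I expect the main obstacle to be the bookkeeping in the base case: one must combine the identity $f_*\circ\prod_\lambda=\prod_\lambda\circ f_*$ with the three preservation properties so as to recognize a single complex as computing both $\prod^{(*)}_{\lambda\in\Lambda}f_*\Q_\lambda$ and $\boR^*f_*\bigl(\prod_\lambda\Q_\lambda\bigr)$; once this is set up, the induction is automatic. (Alternatively, the whole statement follows from the Grothendieck spectral sequences of the two ways of composing $\prod_\Lambda$ with $f_*$, but the hands-on argument above fits better with the dimension-shifting machinery already used in this paper.)
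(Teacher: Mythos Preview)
Your proof is correct, but it takes a different route from the paper's.  The paper does not induct on~$t$: after establishing (exactly as you do) that $\prod^{(m)}_{\lambda\in\Lambda} f_*\Q_\lambda \simeq \cH^m\bigl(f_*\I^\bu\bigr)$ with $\I^\bu=\prod_{\lambda}\J_\lambda^\bu$, it simply observes that $\I^\bu$ is a complex of injectives in $Y\Qcoh$ with $\cH^l(\I^\bu)=0$ for $l>t$, lets $\M=\ker(\I^t\to\I^{t+1})$, and reads off $\cH^{t+k}(f_*\I^\bu)=\boR^k f_*\M$, which vanishes for $k>d$.  In other words, the paper performs a single truncation of the product complex and invokes the cohomological-dimension bound on~$f_*$ once, whereas you unwind the same content by $t$~successive one-step dimension shifts.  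Your base case $t=0$ \emph{is} the paper's argument specialized to $t=0$; your inductive step repackages the passage from $t$ to $t-1$ that the paper accomplishes in one stroke.  Both arguments rest on the same three preservation properties of~$f_*$ that you recorded; the paper's version is a bit shorter and avoids the bookkeeping of tracking the hypotheses through the inductive step, while your version makes the reduction mechanism completely explicit and is closer in spirit to Lemma~\ref{weakly-adjusted-objects-lemma}.  The spectral-sequence alternative you mention at the end would also work and is essentially the common generalization of the two hands-on arguments.
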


\begin{proof}
 Arguing similarly to the proof of
Lemma~\ref{flat-affine-direct-image-lemma}, we observe that
the functor $f_*\:Y\Qcoh\rarrow X\Qcoh$ preserves infinite products
and takes injective objects to injective objects.
 For any chosen injective coresolutions $0\rarrow\Q_\lambda
\rarrow\J_\lambda^\bu$ in $Y\Qcoh$, the complexes $0\rarrow
f_*\Q_\lambda\rarrow f_*\J_\lambda^\bu$ are exact in $X\Qcoh$ by
assumption; so they are injective coresolutions in $X\Qcoh$.
 Consequently, we have
$$
 \prod\nolimits_{\lambda\in\Lambda}^{(m)}f_*\Q_\lambda=
 \cH^m\left(\prod\nolimits_{\lambda\in\Lambda}f_*\J_\lambda^\bu\right)
 \simeq
 \cH^m\left(f_*\prod\nolimits_{\lambda\in\Lambda}\J_\lambda^\bu\right),
 \qquad m\ge0.
$$

 Put $\I^\bu=\prod_{\lambda\in\Lambda}\J_\lambda^\bu$.
 By assumption, we have $\cH^l(\I^\bu)=0$ for all $l>t$.
 Denote by $\M$ the kernel of the morphism $\I^t\rarrow\I^{t+1}$
in $Y\Qcoh$.
 Then the complex $0\rarrow\M\rarrow\I^t\rarrow\I^{t+1}\rarrow
\I^{t+2}\rarrow\dotsb$ is exact, so $\I^{t+\bu}$ is an injective
coresolution of $\M$ in $Y\Qcoh$ and $\boR^k f_*\M=
\cH^{t+k}(f_*\I^\bu)$ for all $k\ge1$.
 By assumption, it follows that $\cH^i(f_*\I^\bu)=0$ for $i>t+d$.
\end{proof}

\subsection{Roos axiom holds for finite-dimensional Noetherian schemes}
 Let $X$ be a Noetherian scheme with an affine open covering
$X=\bigcup_{\alpha=1}^N U_\alpha$.
 For every subsequence of indices $1\le\alpha_1<\dotsb<\alpha_r\le N$,
denote by $N_{r;\,\alpha_1,\dotsc,\alpha_r}$ the minimal cardinality
of an affine open covering of the intersection
$U_{\alpha_1}\cap\dotsb\cap U_{\alpha_r}$.
 For every $1\le r\le N$, denote by $N_r$ the maximum of all the numbers
$N_{r;\,\alpha_1,\dotsc,\alpha_r}$ (where $r$~is fixed, while
$\alpha_1$,~\dots, $\alpha_r$ vary).
 Put $N_{N+1}=0$.
 Finally, consider the number
\begin{equation} \label{combinatorial-number-M}
 M = \max\nolimits_{r=1}^N(r+N_r+\max(N_r,N_{r+1})-3),
\end{equation}
where $\max$ denotes the maximal value in a finite set of integers.

\begin{prop} \label{roos-axiom-cech-coresolution-flasque}
 Let $X$ be a Noetherian scheme and $M$ be the combinatorial
number~\eqref{combinatorial-number-M}.
 Then, for any set\/ $\Lambda$ and any family of \emph{flasque}
quasi-coherent sheaves $(\Q_\lambda)_{\lambda\in\Lambda}$ on $X$,
one has\/ $\prod_{\lambda\in\Lambda}^{(i)}\Q_\lambda=0$ in $X\Qcoh$
for all $i>M$.
\end{prop}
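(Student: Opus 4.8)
The plan is to run the Čech coresolution argument from the proof of Theorem~\ref{roos-axiom-cech-coresolution-semi-separated}, now over the affine open covering $X=\bigcup_{\alpha=1}^N U_\alpha$ of our Noetherian scheme $X$, the essential new feature being that the ``grouped'' schemes
$$
 U_r=\coprod\nolimits_{1\le\alpha_1<\dots<\alpha_r\le N}U_{\alpha_1}\cap\dots\cap U_{\alpha_r}, \qquad 1\le r\le N,
$$
are no longer affine, only quasi-compact semi-separated. Writing $f_r\:U_r\rarrow X$ for the natural morphism (flat, but not affine in general) and $\N_{\lambda,r}=f_r^*\Q_\lambda\in U_r\Qcoh$, Lemma~\ref{cech-coresolution-lemma} provides for each $\lambda\in\Lambda$ an exact sequence
$$
 0\rarrow\Q_\lambda\rarrow f_1{}_*\N_{\lambda,1}\rarrow\dots\rarrow f_N{}_*\N_{\lambda,N}\rarrow0
$$
in $X\Qcoh$, exactly as in the semi-separated case. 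Since $\Q_\lambda$ is flasque and restriction to open subschemes preserves flasqueness (Lemma~\ref{flasqueness-preservation-by-direct-inverse-images}(b)), every $\N_{\lambda,r}$ is flasque on $U_r$; hence Lemmas~\ref{flasque-adjusted-to-direct-images} and~\ref{adjusted-class-lemma} give $\boR^m f_r{}_*\N_{\lambda,r}=0$ for all $m\ge1$.

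Next I would record two homological estimates for each~$r$. First, each scheme $U_{\alpha_1}\cap\dots\cap U_{\alpha_r}$ occurring in the disjoint union defining $U_r$ is an open subscheme of the affine (hence separated) scheme $U_{\alpha_1}$, so it is a quasi-compact semi-separated Noetherian scheme with an affine open covering of cardinality $\le N_r$; computing derived products in $U_r\Qcoh$ summand by summand, Theorem~\ref{roos-axiom-cech-coresolution-semi-separated} yields $\prod^{(l)}_{\lambda\in\Lambda}\N_{\lambda,r}=0$ in $U_r\Qcoh$ for all $l>N_r-1$. Second, for an affine open $U_\alpha\subset X$ the preimage $f_r^{-1}(U_\alpha)$ is the finite disjoint union of the schemes $U_\alpha\cap U_{\alpha_1}\cap\dots\cap U_{\alpha_r}$, each of which is an $r$\+fold intersection (when $\alpha\in\{\alpha_1,\dots,\alpha_r\}$) or an $(r+1)$\+fold intersection (otherwise), hence admits an affine open covering of cardinality $\le\max(N_r,N_{r+1})$; forming the union of the $j$\+th members over the (finitely many) summands, and using that a finite disjoint union of affines is affine, $f_r^{-1}(U_\alpha)$ itself admits an affine open covering of cardinality $\le\max(N_r,N_{r+1})$. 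Since $U_r$ is semi-separated and Noetherian, Lemma~\ref{homological-dimension-of-direct-image} then gives $\boR^k f_r{}_*\M=0$ for every $\M\in U_r\Qcoh$ and all $k>\max(N_r,N_{r+1})-1$. Feeding these facts into Lemma~\ref{flat-nonaffine-direct-image-lemma} with $t=N_r-1$ and $d=\max(N_r,N_{r+1})-1$, I obtain $\prod^{(i)}_{\lambda\in\Lambda}f_r{}_*\N_{\lambda,r}=0$ for all $i>c_r$, where $c_r=N_r+\max(N_r,N_{r+1})-2$.

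It remains to combine these vanishings along the Čech coresolution above, regarded as a coresolution of length $N-1$ of the object $(\Q_\lambda)_{\lambda\in\Lambda}$ in $(X\Qcoh)^\Lambda$ relative to the functor $G=\prod_{\lambda\in\Lambda}$, with the term $(f_r{}_*\N_{\lambda,r})_\lambda$ in cohomological degree $r-1$ and $\boR^m G$ vanishing there for $m>c_r$. The numbers $c_r$ need not satisfy the uniform bound $c_r\le N-r$ required to invoke Lemma~\ref{weakly-adjusted-objects-lemma} directly, and even when they do that lemma delivers only the crude bound $N-1$; so instead I would split the coresolution into the short exact sequences $0\rarrow B^{r-1}\rarrow(f_r{}_*\N_{\lambda,r})_\lambda\rarrow B^r\rarrow0$ with $B^0=(\Q_\lambda)_\lambda$ and $B^N=0$, and run the downward induction on $r$ through the long exact sequences of $\boR^*G$ exactly as in the proof of Lemma~\ref{weakly-adjusted-objects-lemma} (equivalently, use the hypercohomology spectral sequence of the stupid filtration). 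This gives $\boR^i G(B^{r-1})=0$ for all $i>\max_{s=r}^N((s-r)+c_s)$, and for $r=1$ it yields $\prod^{(i)}_{\lambda\in\Lambda}\Q_\lambda=\boR^i G((\Q_\lambda)_\lambda)=0$ for all $i>\max_{r=1}^N((r-1)+c_r)=\max_{r=1}^N(r+N_r+\max(N_r,N_{r+1})-3)=M$, as desired. The main obstacle is the second paragraph: obtaining the sharp homological-dimension bound for the non-affine direct images $f_r{}_*$, which rests on carefully counting affine open coverings of the strata $U_\alpha\cap U_{\alpha_1}\cap\dots\cap U_{\alpha_r}$ (distinguishing the $r$\+fold and $(r+1)$\+fold cases) — and it is precisely this bookkeeping that produces the number $M$ in its stated form.
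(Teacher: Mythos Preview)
Your proof is correct and follows essentially the same approach as the paper's: the same \v Cech coresolution, the same flasqueness-preservation to kill the higher direct images of $\N_{\lambda,r}$, the same two homological estimates ($t=N_r-1$ via Theorem~\ref{roos-axiom-cech-coresolution-semi-separated} and $d=\max(N_r,N_{r+1})-1$ via Lemma~\ref{homological-dimension-of-direct-image}), the same appeal to Lemma~\ref{flat-nonaffine-direct-image-lemma}, and the same final splicing. Your explanation of why $f_r^{-1}(U_\alpha)$ admits an affine covering of cardinality $\le\max(N_r,N_{r+1})$ (taking disjoint unions of the $j$-th affine pieces across the summands) is in fact more explicit than the paper's.

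One small remark: your worry that Lemma~\ref{weakly-adjusted-objects-lemma} cannot be invoked directly is unfounded. The paper does invoke it, simply with $d=M$ rather than $d=N-1$: one pads the \v Cech coresolution with zeros in degrees $N,\dotsc,M$ (note $M\ge N-1$, since already the term $r=N$ gives $N+2N_N-3\ge N-1$), and then the hypothesis $\boR^mG(K^i)=0$ for $m>d-i$ is satisfied precisely because $c_r\le M-(r-1)$ for all~$r$. Your hand-rolled downward induction is of course exactly the proof of that lemma, so there is no real difference.
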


\begin{proof}
 We use the notation $U_r=\coprod_{1\le\alpha_1<\dotsb<\alpha_r\le N}
U_{\alpha_1}\cap\dotsb\cap U_{\alpha_r}$ and $f_r\:U_r\rarrow X$
from the proof of
Theorem~\ref{roos-axiom-cech-coresolution-semi-separated}.
 The schemes $U_r$ are not necessarily affine now, but they are
semi-separated (as open subschemes in affine schemes).
 The morphism~$f_r$ is no longer affine, either; but it is still flat.
 Furthermore, for every $1\le r\le N$ the scheme $U_r$ can be covered
by $N_r$ affine open subschemes (since $U_r$ is a finite disjoint union
of schemes that can be so covered).

 In addition, we observe that, for any index~$\alpha$ and
any subsequence of indices $1\le\alpha_1<\dotsb<\alpha_r\le N$,
the scheme $U_\alpha\cap(U_{\alpha_1}\cap\dotsb\cap U_{\alpha_r})$
can be covered by $N_r$ affine open subschemes if
$\alpha\in\{\alpha_1,\dotsc,\alpha_r\}$ and by $N_{r+1}$ affine
open subschemes otherwise.
 So, in any case, $U_\alpha\cap(U_{\alpha_1}\cap\dotsb
\cap U_{\alpha_r})$ can be covered by $\max(N_r,N_{r+1})$ affine
open subschemes.
 It follows that the scheme $f_r^{-1}(U_\alpha)=U_\alpha\times_XU_r$
also can be covered by $\max(N_r,N_{r+1})$ affine open subschemes.

 Since the sheaves $\Q_\lambda$ are flasque on $X$ and
the morphism~$f_r$ is a disjoint union of open immersions into $X$,
Lemma~\ref{flasqueness-preservation-by-direct-inverse-images}(b)
implies that the sheaves $f_r^*\Q_\lambda$ are flasque on~$U_r$.
 By Lemmas~\ref{adjusted-class-lemma}
and~\ref{flasque-adjusted-to-direct-images}, it follows that
$\boR^mf_r{}_*(f_r^*\Q_\lambda)=0$ for all $1\le r\le N$, \
$\lambda\in\Lambda$, and $m\ge1$.
 By Theorem~\ref{roos-axiom-cech-coresolution-semi-separated}, we have
$\prod_{\lambda\in\Lambda}^{(l)}f_r^*Q_\lambda=0$ in $U_r\Qcoh$
for all $l>N_r-1$.
 Finally, by Lemma~\ref{homological-dimension-of-direct-image}, we
have $\boR^kf_r{}_*\M=0$ for every quasi-coherent sheaf $\M$ on
$U_r$ and all $k>\max(N_r,N_{r+1})-1$.
 Thus, Lemma~\ref{flat-nonaffine-direct-image-lemma} is applicable,
and we can conclude that $\prod_{\lambda\in\Lambda}^{(i)}
f_r{}_*f_r^*\Q_\lambda=0$ in $X\Qcoh$ for all
$i>N_r+\max(N_r,N_{r+1})-2$.

 It remains to recall the \v Cech
coresolutions~\eqref{cech-coresolution} from
Lemma~\ref{cech-coresolution-lemma}, which have the form
$$
 0\lrarrow\Q_\lambda\lrarrow f_1{}_*f_1^*\Q_\lambda\lrarrow
 f_2{}_*f_2^*\Q_\lambda\lrarrow\dotsb\lrarrow f_N{}_*f_N^*\Q_\lambda
 \lrarrow0.
$$
 By Lemma~\ref{weakly-adjusted-objects-lemma}, it follows that
$\prod_{\lambda\in\Lambda}^{(i)}\Q_\lambda=0$ in $X\Qcoh$ whenever
$i>r+N_r+\max(N_r,N_{r+1})-3$ for all $1\le r\le N$.
\end{proof}

\begin{thm} \label{roos-axiom-cech-coresolution-finite-krull-dim}
 Let $X$ be a Noetherian scheme of finite Krull dimension $D$ and $M$
be the combinatorial number~\eqref{combinatorial-number-M}.
 Then the Roos axiom $\AB4^*$\+$n$ with $n=M+D$ holds for
the Grothendieck category $X\Qcoh$.
 In other words, for any set\/ $\Lambda$ and any family of
quasi-coherent sheaves $(\M_\lambda)_{\lambda\in\Lambda}$ on $X$,
one has\/ $\prod_{\lambda\in\Lambda}^{(i)}\M_\lambda=0$ in $X\Qcoh$
for all $i>M+D$.
\end{thm}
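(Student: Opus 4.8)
The plan is to run a flasque-coresolution dévissage in the product category $\sE=(X\Qcoh)^\Lambda$, feeding the bound of Proposition~\ref{roos-axiom-cech-coresolution-flasque} for flasque families into Lemma~\ref{weakly-adjusted-objects-lemma}, and using the finite flasque-coresolution dimension provided by Lemma~\ref{flasque-dimension-Krull-dimension}. Fix a set $\Lambda$ and a family $(\M_\lambda\in X\Qcoh)_{\lambda\in\Lambda}$. Since $X$ is Noetherian it is quasi-compact, so we may fix a finite affine open covering $X=\bigcup_{\alpha=1}^N U_\alpha$ --- the one entering the definition of $M$ in~\eqref{combinatorial-number-M}. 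As in Section~\ref{cech-semi-separated-secn}, put $\sE=(X\Qcoh)^\Lambda$, $\sA=X\Qcoh$, and $G=\prod_{\lambda\in\Lambda}$, so that $\boR^mG((A_\lambda)_{\lambda})=\prod^{(m)}_{\lambda\in\Lambda}A_\lambda$; recall that the injective objects of $\sE$ are precisely the $\Lambda$\+tuples of injective objects of $X\Qcoh$, and that a complex in $\sE$ is exact if and only if each of its components is.

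First I would, for every $\lambda$, use Lemma~\ref{flasque-dimension-Krull-dimension} to choose a flasque coresolution $0\rarrow\M_\lambda\rarrow\Q_\lambda^0\rarrow\dotsb\rarrow\Q_\lambda^D\rarrow0$ of $\M_\lambda$ in $X\Qcoh$ (appending zero terms at the end if the flasque-coresolution dimension of $\M_\lambda$ is smaller than $D$). Assembling these $\Lambda$\+tuplewise yields a coresolution
\[
 0\lrarrow E\lrarrow K^0\lrarrow K^1\lrarrow\dotsb\lrarrow K^D\lrarrow0
\]
in $\sE$, where $E=(\M_\lambda)_{\lambda}$ and $K^i=(\Q_\lambda^i)_{\lambda}$. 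Since every $\Q_\lambda^i$ is flasque, Proposition~\ref{roos-axiom-cech-coresolution-flasque} gives $\boR^mG(K^i)=\prod^{(m)}_{\lambda\in\Lambda}\Q_\lambda^i=0$ for all $i\ge0$ and all $m>M$.

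Then I would stretch this to a coresolution of length $M+D$ by appending $M$ zero objects at its right end, and apply Lemma~\ref{weakly-adjusted-objects-lemma} with $d=M+D$. Its hypothesis is satisfied: for $0\le i\le D$ one has $M+D-i\ge M$, so ``$m>M+D-i$'' forces ``$m>M$'' and hence $\boR^mG(K^i)=0$; and for $i>D$ the term $K^i$ is the zero object. The lemma then gives $\boR^iG(E)=0$, that is $\prod^{(i)}_{\lambda\in\Lambda}\M_\lambda=0$, for all $i>M+D$. As $(\M_\lambda)_{\lambda}$ was arbitrary, this is exactly $\AB4^*$\+$(M+D)$ for $X\Qcoh$.

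I do not anticipate a genuine obstacle: all the substance is already in place --- the combinatorial bound $M$ for flasque families through the \v Cech coresolution and the non-affine direct-image Lemma~\ref{flat-nonaffine-direct-image-lemma}, and the Grothendieck-vanishing input behind Lemma~\ref{flasque-dimension-Krull-dimension}. The only point requiring a little care is the index bookkeeping: the terms of the length-$D$ flasque coresolution are a priori only known to satisfy $\boR^mG=0$ for $m>M$, rather than the sharper $m>D-i$ that a direct use of Lemma~\ref{weakly-adjusted-objects-lemma} with $d=D$ would need, which is why one first stretches the coresolution to length $M+D$. If one prefers to avoid the padding, the same conclusion follows by splitting each flasque coresolution into short exact sequences in $\sE$ and running a downward induction on the cosyzygies through the long exact sequences of $\boR^*G$, applying Proposition~\ref{roos-axiom-cech-coresolution-flasque} at each step; this simply reproves the needed instance of Lemma~\ref{weakly-adjusted-objects-lemma} inline.
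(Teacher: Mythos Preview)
Your proof is correct and follows essentially the same approach as the paper: choose flasque coresolutions of length~$D$ via Lemma~\ref{flasque-dimension-Krull-dimension}, use Proposition~\ref{roos-axiom-cech-coresolution-flasque} to bound the derived products of the flasque terms by~$M$, and conclude via Lemma~\ref{weakly-adjusted-objects-lemma}. The only difference is expository --- you make the zero-padding to length $M+D$ explicit, whereas the paper invokes Lemma~\ref{weakly-adjusted-objects-lemma} directly with the implicit understanding that the hypothesis $\boR^mG(K^i)=0$ for $m>d-i$ is trivially satisfied once $d-i\ge M$.
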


\begin{proof}
 By Lemma~\ref{flasque-dimension-Krull-dimension}, the sheaves
$\M_\lambda$ have flasque coresolutions of length $D$ in $X\Qcoh$,
$$
 0\lrarrow\M_\lambda\lrarrow\Q_\lambda^0\lrarrow\Q_\lambda^1
 \lrarrow\dotsb\lrarrow\Q_\lambda^D\lrarrow0.
$$
 By Proposition~\ref{roos-axiom-cech-coresolution-flasque}, we have
$\prod_{\lambda\in\Lambda}^{(m)}\Q_\lambda^s=0$ for all $0\le s\le D$,
\ $\lambda\in\Lambda$, and $m>M$.
 By virtue of Lemma~\ref{weakly-adjusted-objects-lemma}, it follows
that $\prod_{\lambda\in\Lambda}^{(i)}\M_\lambda=0$ for all $i>M+D$.
\end{proof}

\Section{Generator of Finite Projective Dimension
for~Semi-Separated~Schemes}  \label{generator-secn}

 The aim of this section is to offer an alternative proof of
the Roos axiom $\AB4^*$\+$n$ for the category of quasi-coherent
sheaves $X\Qcoh$ on a quasi-compact semi-separated scheme $X$
covered by $N$ affine open subschemes.
 It gives a slightly weaker numerical bound $n=N$ for the parameter~$n$
in the Roos axiom as compared to the \v Cech coresolution argument in
Section~\ref{cech-semi-separated-secn}, but provides a conceptually
stronger result that the category $X\Qcoh$ has a generator of
finite projective dimension~$N$.

\subsection{Contraadjusted and very flat modules}
\label{cta-vfl-modules-subsecn}
 Let $R$ be a commutative ring.
 Given an element $s\in R$, we consider the localization $R[s^{-1}]=
S^{-1}R$ of the ring $R$ at the multiplicative subset $S=\{1,s,s^2,
s^3,\dotsc\}$.
 So $R[s^{-1}]$ is the ring of functions on the principal affine open
subscheme in $\Spec R$ corresponding to the element~$s$.
 An $R$\+module $C$ is said to be
\emph{contraadjusted}~\cite[Section~1.1]{Pcosh}
if $\Ext^1_R(R[s^{-1}],C)=0$ for every $s\in R$.
 An $R$\+module $F$ is said to be \emph{very flat} if
$\Ext^1_R(F,C)=0$ for every contraadjusted $R$\+module~$C$.

 The word ``contraadjusted'' is supposed to mean ``adjusted for
contraherent cosheaves''.
 So contraadjusted modules were introduced in~\cite{Pcosh} as a basic
building block of the contraherent cosheaf theory.
 The definitions of contraadjusted and very flat modules given above
fall under the general scheme of a concept known as a \emph{cotorsion
pair} (or \emph{cotorsion theory}) of classes of modules, or more
generally, classes of objects in an abelian or exact category.
 The definition of a cotorsion theory is due to Salce~\cite{Sal},
and the main result in this connection is the \emph{Eklof--Trlifaj
theorem}~\cite[Theorems~2 and~10]{ET}.
 The main reference source for cotorsion pairs in module categories
is the book~\cite{GT}.
 An introductory discussion tailored to the situation at hand can be
found in~\cite[Section~4]{Pphil};
see specifically~\cite[Theorem~4.5]{Pphil}.

 In particular, the pair of classes (very flat $R$\+modules,
contraadjusted $R$\+modules) is a complete cotorsion pair in
the abelian category of $R$\+modules $R\Modl$ (generated by the set of
$R$\+modules $R[s^{-1}]$, \,$s\in R$) \cite[Theorem~1.1.1]{Pcosh}.
 It is called the \emph{very flat cotorsion pair} in $R\Modl$.
 As one of the aspects of the Eklof--Trlifaj theorem, one obtains
the following description of very flat $R$\+modules: an $R$\+module
is very flat if and only if it is a direct summand of a module
admitting a transfinite (ordinal-indexed) increasing filtration whose
successive quotients are $R$\+modules of the form $R[s^{-1}]$,
\,$s\in R$ \,\cite[Corollary~1.1.4]{Pcosh}.
 Any projective $R$\+module is very flat, and any very flat $R$\+module
is flat.

 In addition, one observes that the $R$\+module $R[s^{-1}]$ has
projective dimension at most~$1$ \,\cite[Section~1.1]{Pcosh},
\cite[proof of Lemma~2.1]{Pcta}.
 It follows that any quotient module of a contraadjusted $R$\+module
is contraadjusted, and that any very flat $R$\+module has projective
dimension at most~$1$.
 Furthermore, the very flat cotorsion pair is \emph{hereditary}:
the kernel of any surjective morphism of very flat $R$\+modules is
very flat.

\begin{ex} \label{open-subscheme-very-flat}
 Let $X$ be an affine scheme and $U\subset X$ be an affine open
subscheme in~$X$.
 Then the ring of functions $S=\cO(U)$ is a very flat module over
the ring $R=\cO(X)$.
 This is~\cite[Lemma~1.2.4]{Pcosh}.
\end{ex}

 Given a commutative ring homomorphism $f\:R\rarrow S$, one says that
$f$~is \emph{very flat} (or equivalently, $S$ is a \emph{very flat
$R$\+algebra}) if, for every element $s\in S$, the $R$\+module
$S[s^{-1}]$ is very flat.
 Notice that the condition that the $R$\+module $S$ itself be very
flat is strictly weaker and \emph{not}
sufficient~\cite[Section~6.1]{Pphil}.

 The \emph{Very Flat Conjecture} (one of possible formulations)
stated that, for any commutative ring $R$, any finitely presented
flat commutative $R$\+algebra is very flat.
 This was proved in~\cite[Main Theorem~1.1 and Corollary~9.1]{PSl1}.
 We will not use this important and difficult result in this paper.

 The main construction below in Section~\ref{very-flat-sheaves-subsecn}
is based on the properties of very flat modules listed in the following
lemmas.

\begin{lem} \label{ascent-lemma}
 Let $R\rarrow S$ be a homomorphism of commutative rings.
 Then, for any very flat $R$\+module $F$, the $S$\+module $S\ot_RF$
is very flat.
\end{lem}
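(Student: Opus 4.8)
The plan is to exploit the functorial description of very flat modules provided by the Eklof--Trlifaj theorem: an $R$\+module is very flat if and only if it is a direct summand of a module admitting a transfinite increasing filtration $(G_\beta)_{\beta\le\gamma}$ with $G_0=0$, $G_\gamma=G$, taking unions at limit ordinals, and with each successive quotient $G_{\beta+1}/G_\beta$ isomorphic to some $R[s^{-1}]$, $s\in R$ \cite[Corollary~1.1.4]{Pcosh}. First I would reduce to the filtered case: if $F$ is a direct summand of such a $G$, then $S\ot_RF$ is a direct summand of $S\ot_RG$, and a direct summand of a very flat $S$\+module is very flat, so it suffices to treat $F=G$ filtered by copies of $R[s^{-1}]$.

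Next I would base-change the filtration along $R\rarrow S$. Applying the right-exact functor $S\ot_R{-}$ to the chain $(G_\beta)$ produces a chain of $S$\+modules $(S\ot_RG_\beta)$; the transition maps need not a priori be injective, but one checks by transfinite induction using the left-exactness obstruction measured by $\operatorname{Tor}^R_1$ that they are. Concretely, each quotient $G_{\beta+1}/G_\beta\simeq R[s_\beta^{-1}]$ is a flat $R$\+module, hence $\operatorname{Tor}^R_1(S,\>G_{\beta+1}/G_\beta)=0$; the short exact sequence $0\rarrow G_\beta\rarrow G_{\beta+1}\rarrow G_{\beta+1}/G_\beta\rarrow0$ then yields an exact sequence $0\rarrow S\ot_RG_\beta\rarrow S\ot_RG_{\beta+1}\rarrow S\ot_R(G_{\beta+1}/G_\beta)\rarrow0$ once one knows the left-hand map is injective, and injectivity of $S\ot_RG_\beta\rarrow S\ot_RG_{\beta+1}$ is supplied by the inductive hypothesis together with the vanishing of $\operatorname{Tor}^R_1(S,\>G_{\beta+1}/G_\beta)$ (which controls the kernel). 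At limit ordinals one uses that $S\ot_R{-}$ commutes with direct limits, so $S\ot_RG_\beta=\varinjlim_{\alpha<\beta}S\ot_RG_\alpha$, and a filtered colimit of monomorphisms along a well-ordered chain is again a chain of monomorphisms. Thus $(S\ot_RG_\beta)_{\beta\le\gamma}$ is a genuine transfinite filtration of $S\ot_RG$.

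Finally I would identify the successive quotients: $(S\ot_RG_{\beta+1})/(S\ot_RG_\beta)\simeq S\ot_R(G_{\beta+1}/G_\beta)\simeq S\ot_RR[s_\beta^{-1}]\simeq S[f(s_\beta)^{-1}]$, where $f\:R\rarrow S$ is the structure homomorphism. Each $S[f(s_\beta)^{-1}]$ is of the form $S[t^{-1}]$ with $t=f(s_\beta)\in S$, hence is very flat as an $S$\+module essentially by definition of the generators of the very flat cotorsion pair in $S\Modl$. Therefore $S\ot_RG$ carries a transfinite filtration by very flat $S$\+modules with very flat successive quotients, and the Eklof lemma part of the Eklof--Trlifaj theorem \cite[Theorems~2 and~10]{ET} (the class of the left-hand side of a cotorsion pair is closed under transfinite extensions) shows $S\ot_RG$ is very flat over $S$. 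Consequently $S\ot_RF$, a direct summand of $S\ot_RG$, is very flat over $S$, as desired. The only slightly delicate point is the injectivity of the base-changed transition maps at successor steps; this is exactly where the flatness of the quotients $R[s^{-1}]$ is used, and it is the one place where a naive application of right-exactness is not enough.
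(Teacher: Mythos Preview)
Your argument is correct and is the standard route: use the description of very flat $R$\+modules as direct summands of transfinitely $R[s^{-1}]$\+filtered modules, base-change the filtration (flatness of the quotients makes the transition maps stay injective), identify the new quotients as $S[f(s)^{-1}]$, and invoke the Eklof lemma. The paper itself does not supply a proof here; it merely cites \cite[Lemma~1.2.2(b)]{Pcosh}, and what you have written is essentially the content of that reference.

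One small expository remark: at successor stages you say that injectivity of $S\ot_RG_\beta\rarrow S\ot_RG_{\beta+1}$ is ``supplied by the inductive hypothesis together with the vanishing of $\operatorname{Tor}^R_1$''. In fact no inductive hypothesis is needed there: the long exact $\operatorname{Tor}$ sequence attached to $0\rarrow G_\beta\rarrow G_{\beta+1}\rarrow R[s_\beta^{-1}]\rarrow0$ begins with $\operatorname{Tor}^R_1(S,R[s_\beta^{-1}])=0$, which already gives the injectivity outright. The transfinite induction is only needed to assemble the successor and limit steps into a single filtration of $S\ot_RG$. This does not affect the validity of the proof, only the phrasing.
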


\begin{proof}
 This is~\cite[Lemma~1.2.2(b)]{Pcosh}.
\end{proof}

\begin{lem} \label{direct-image-lemma}
 Let $R\rarrow S$ be a very flat homomorphism of commutative rings.
 Then any very flat $S$\+module $G$ is very flat as an $R$\+module.
\end{lem}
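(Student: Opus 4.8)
The plan is to deduce the statement --- which asserts that any very flat $S$\+module $G$ is very flat as an $R$\+module, the ``direct image'' counterpart of the ``ascent'' Lemma~\ref{ascent-lemma} --- directly from the definition of a very flat ring homomorphism, by way of the Eklof--Trlifaj description of very flat modules together with Eklof's lemma on transfinite extensions.

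First I would invoke~\cite[Corollary~1.1.4]{Pcosh} for the ring $S$: every very flat $S$\+module is a direct summand of an $S$\+module admitting a transfinite increasing filtration, continuous at limit ordinals, whose successive quotients are isomorphic to $S$\+modules of the form $S[s^{-1}]$ with $s\in S$. Applying this to $G$, I fix such an $S$\+module $G'$ with such a filtration, so that $G$ is a direct summand of $G'$ in $S\Modl$.

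Next I would restrict scalars along $R\to S$. The same chain of submodules is an increasing continuous filtration of $G'$ by $R$\+submodules, and each of its successive quotients $S[s^{-1}]$ is a very flat $R$\+module --- which is exactly what it means for the homomorphism $R\to S$ to be very flat. By Eklof's lemma, the left-hand class of a cotorsion pair in a module category is closed under transfinite extensions; applying this to the very flat cotorsion pair in $R\Modl$ (see~\cite[Theorems~2 and~10]{ET} and~\cite{GT}), we conclude that $G'$ is a very flat $R$\+module. Since the left-hand class of a cotorsion pair is moreover closed under direct summands, and $G$ is a direct summand of $G'$ already as an $R$\+module, it follows that $G$ is very flat over~$R$.

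I do not anticipate any real obstacle; the argument is a routine application of the cotorsion-pair toolkit. The only points requiring a moment's care are that the $S$\+module filtration of $G'$ remains exhaustive and continuous after restriction of scalars --- which is clear, since $S$\+submodules are $R$\+submodules and unions of submodules are unchanged --- and that one must appeal to closure of very flat modules under \emph{transfinite}, not merely finite, extensions. Equivalently, one may bypass filtrations and check $\Ext^1_R(G,C)=0$ for every contraadjusted $R$\+module $C$ head-on: $G$ is a summand of $G'$, the functor $\Ext^1_R(-,C)$ vanishes on each $S[s^{-1}]$ because $R\to S$ is a very flat homomorphism, and hence it vanishes on $G'$ by Eklof's lemma --- the same proof in different dress.
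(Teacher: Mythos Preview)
Your argument is correct. The paper itself gives no proof here, merely citing~\cite[Lemma~1.2.3(b)]{Pcosh}; your write-up is exactly the standard filtration argument one expects that reference to contain --- the Eklof--Trlifaj description of very flat $S$\+modules, restriction of scalars, the definition of a very flat ring map, and Eklof's lemma applied to the very flat cotorsion pair over~$R$.
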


\begin{proof}
 This is~\cite[Lemma~1.2.3(b)]{Pcosh}.
\end{proof}

\begin{lem} \label{descent-lemma}
 Let $f_\alpha\:R\rarrow S_\alpha$ be a finite family of homomorphisms
of commutative rings such that the related family of morphisms of
the spectra\/ $\Spec S_\alpha\rarrow\Spec R$ is an open covering
of\/ $\Spec R$ (in particular, $\Spec S_\alpha\rarrow\Spec R$ is
the embedding of an affine open subscheme for every~$\alpha$).
 Then an $R$\+module $F$ is very flat \emph{if and only if}
the $S_\alpha$\+module $S_\alpha\ot_RF$ is very flat for every~$\alpha$.
\end{lem}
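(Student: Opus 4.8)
The plan is to prove the two implications separately. The ``only if'' direction is the easy one: if $F$ is a very flat $R$\+module, then by Lemma~\ref{ascent-lemma} the $S_\alpha$\+module $S_\alpha \ot_R F$ is very flat for every~$\alpha$. This uses nothing about the covering hypothesis.

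The ``if'' direction is the substance of the statement, and it is where the local-to-global descent must actually be carried out. First I would reduce to checking that $\Ext^1_R(F,C) = 0$ for an arbitrary contraadjusted $R$\+module~$C$; equivalently, using the characterization of very flatness via the very flat cotorsion pair, I could instead aim to present $F$ (up to a direct summand) as a transfinitely filtered module with successive quotients of the form $R[s^{-1}]$, but the $\Ext$\+vanishing route is cleaner here. Since $\Spec S_\alpha \to \Spec R$ is a finite affine open covering, I would invoke a \v Cech-type descent for the functors $\Ext^*_R(F,{-})$ — concretely, the fact that for any $R$\+module $M$ the Amitsur/\v Cech complex built from the localizations at principal open subsets refining the covering $\{\Spec S_\alpha\}$ computes $M$ itself in degree~$0$ and is exact in positive degrees (this is the affine case of the \v Cech coresolution, Lemma~\ref{cech-coresolution-lemma}, applied on $\Spec R$). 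Combined with the hypothesis that each $S_\alpha \ot_R F$ is very flat over $S_\alpha$ and the base-change compatibility of $\Ext$ for flat ring maps, one deduces that the higher $\Ext$ groups $\Ext^{\ge 1}_R(F, C)$ sit in an exact sequence whose other terms vanish, forcing $\Ext^1_R(F,C) = 0$.

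There is a complication: $F$ is not assumed flat a priori, so localizing $\Ext^1_R(F,C)$ at the elements cutting out the $\Spec S_\alpha$ need not behave well unless one first knows something about the projective dimension of~$F$. The standard way around this, which I expect to be the main technical point, is to observe that very flatness of $S_\alpha \ot_R F$ over $S_\alpha$ already implies each $S_\alpha \ot_R F$ is flat over $S_\alpha$, hence $F$ is flat over $R$ by faithfully flat (or just Zariski-local) descent of flatness; once $F$ is flat, $F$ has finite flat dimension is not yet guaranteed, but flatness is enough to make $\Ext^1_R(F,{-})$ commute with the relevant localizations and to make the \v Cech argument go through. So the real work is: (i) descend flatness of $F$, and (ii) run the \v Cech/Amitsur spectral sequence for $\Ext^1_R(F,C)$ against a contraadjusted~$C$, using that $C$ restricted to each principal open remains contraadjusted (which follows from the observation recorded above that quotients and localizations of contraadjusted modules are contraadjusted) and that $\Ext^1_{S_\alpha}(S_\alpha\ot_R F, S_\alpha\ot_R C) = 0$ since $S_\alpha\ot_R F$ is very flat over $S_\alpha$. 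I would expect step (ii) to be where one must be careful about the difference between $\Ext$ computed over $R$ and over $S_\alpha$, invoking the change-of-rings isomorphism $\Ext^1_R(F, C)\otimes_R S_\alpha \simeq \Ext^1_{S_\alpha}(S_\alpha\ot_R F, C\otimes_R S_\alpha)$ valid because $S_\alpha$ is flat (even very flat) over~$R$ and $F$ is now known to be flat. Having checked vanishing of $\Ext^1_R(F,C)$ for all contraadjusted $C$, we conclude that $F$ is very flat, completing the proof. The reference \cite[Lemma~1.2.6(b)]{Pcosh} presumably contains exactly this argument.
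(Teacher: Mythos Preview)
The paper does not argue this lemma in the text; it simply defers to \cite[Lemma~1.2.6(a)]{Pcosh}.  Your attempt at a direct proof contains two genuine gaps in the ``if'' direction.

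First, the change-of-rings isomorphism you invoke,
\[
\Ext^1_R(F, C)\otimes_R S_\alpha \;\simeq\; \Ext^1_{S_\alpha}(S_\alpha\ot_R F,\, C\otimes_R S_\alpha),
\]
does \emph{not} follow from flatness of $F$ together with flatness of $R\to S_\alpha$.  Flatness of $R\to S_\alpha$ lets you tensor a projective $R$\+resolution $P_\bu\to F$ to a projective $S_\alpha$\+resolution of $S_\alpha\ot_R F$, but the remaining identification $\Hom_R(P_i,C)\otimes_R S_\alpha\simeq\Hom_R(P_i,\,S_\alpha\ot_R C)$ requires the $P_i$ to be \emph{finitely generated}; flatness of $F$ is irrelevant here.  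With no finiteness on $F$, localizing $\Ext^1_R(F,C)$ does not compute $\Ext^1$ over the localization.  What \emph{is} true, and is the tool actually used in \cite{Pcosh}, is the adjunction isomorphism $\Ext^i_R(F,N)\simeq\Ext^i_{S_\alpha}(S_\alpha\ot_R F,N)$ for any $S_\alpha$\+module $N$ (from $\Hom_{S_\alpha}(S_\alpha\ot_R P,N)=\Hom_R(P,N)$); but that is a different statement and does not let you localize $\Ext^1_R(F,C)$ at~$S_\alpha$.

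Second, you claim that ``$C$ restricted to each principal open remains contraadjusted,'' appealing to the fact recorded in Section~\ref{cta-vfl-modules-subsecn} that quotients of contraadjusted modules are contraadjusted.  But $S_\alpha\otimes_R C$ is a localization of $C$, not a quotient, and contraadjustedness is \emph{not} in general preserved by extension of scalars along open immersions: it is a colocal rather than a local property, stable under $\Hom_R(S_\alpha,{-})$ rather than $S_\alpha\otimes_R{-}$ (this is precisely the theme of~\cite{Pal}).  So you cannot conclude $\Ext^1_{S_\alpha}(S_\alpha\ot_R F,\, S_\alpha\ot_R C)=0$ from contraadjustedness of $C$ in the way you suggest.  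The argument in \cite{Pcosh} circumvents both issues, but not along the lines of your sketch; your reduction to flatness of $F$ followed by ``localize $\Ext^1$ and use local contraadjustedness'' does not go through.
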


\begin{proof}
 This is~\cite[Lemma~1.2.6(a)]{Pcosh}.
\end{proof}

 A further discussion of Lemmas~\ref{ascent-lemma}\+-\ref{descent-lemma}
can be found in the paper~\cite[Example~2.5]{Pal}.

\subsection{Very flat quasi-coherent sheaves}
\label{very-flat-sheaves-subsecn}
 Let $X$ be a scheme.
 A quasi-coherent sheaf $\F$ on $X$ is said to be \emph{very
flat}~\cite[Section~1.10]{Pcosh} if, for every affine open subscheme
$U\subset X$, the $\cO_X(U)$\+module $\F(U)$ is very flat.
 In view of Lemmas~\ref{ascent-lemma} and~\ref{descent-lemma}, it
suffices to check this condition for affine open subschemes
$U=U_\alpha$ belonging to any given affine open covering
$X=\bigcup_\alpha U_\alpha$ of the scheme~$X$.
 In particular, if $X=V\cup W$ is a (not necessarily affine) open
covering of $X$, then a quasi-coherent sheaf $\F$ on $X$ is very flat
if and only if the quasi-coherent sheaf $\F|_V$ is very flat on $V$
and the quasi-coherent sheaf $\F|_W$ is very flat on~$W$.

 The class of all very flat quasi-coherent sheaves on a scheme $X$
is closed under extensions, kernels of surjective morphisms, and
infinite direct sums (because the class of all very flat modules
over a commutative ring has these closure properties).

 For any morphism of schemes $f\:Y\rarrow X$, the inverse image
functor $f^*\:X\Qcoh\allowbreak\rarrow Y\Qcoh$ takes very flat
quasi-coherent sheaves on $X$ to very flat quasi-coherent sheaves on~$Y$
(by Lemma~\ref{ascent-lemma}).

 A morphism of schemes $f\:Y\rarrow X$ is said to be \emph{very flat}
if, for any pair of affine open subschemes $U\subset X$ and
$V\subset Y$ such that $f(V)\subset U$, the $\cO_X(U)$\+module
$\cO_Y(V)$ is very flat~\cite[Section~1.10]{Pcosh}.
 A morphism of affine schemes $\Spec S\rarrow\Spec R$ is very flat if
and only if the related morphism of commutative rings $R\rarrow S$ is
very flat (in the sense of the definition in
Section~\ref{cta-vfl-modules-subsecn}).

 For any very flat affine morphism of schemes $f\:Y\rarrow X$,
the direct image functor~$f_*$ takes very flat quasi-coherent sheaves
on $Y$ to very flat quasi-coherent sheaves on~$X$
(by Lemma~\ref{direct-image-lemma}).
 In particular, for any open subscheme $W$ in a scheme $X$, the open
embedding morphism $j\:W\rarrow X$ is very flat (by
Example~\ref{open-subscheme-very-flat}).
 Therefore, if the morphism~$j$ is affine, then the quasi-coherent
sheaf $j_*\G$ on $X$ is very flat for any very flat quasi-coherent
sheaf $\G$ on~$W$.

 The following lemma is a very flat version of~\cite[Lemma~A.1]{EP}
and a part of~\cite[Lemma~4.1.1]{Pcosh}.

\begin{lem} \label{very-flat-qcoh}
 On any quasi-compact semi-separated scheme $X$, any quasi-coherent
sheaf is a quotient sheaf of a very flat quasi-coherent sheaf.
\end{lem}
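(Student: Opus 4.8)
The plan is to reduce to the affine case, where free modules suffice (a free module is projective, hence very flat by the last paragraph of Section~\ref{cta-vfl-modules-subsecn}), and then to assemble the local resolutions over a finite affine covering $X=\bigcup_{\alpha=1}^{N}U_\alpha$. I would rely on the fact that both operations available preserve very flatness: restriction to an open subscheme, by the definition of a very flat quasi-coherent sheaf together with Lemma~\ref{descent-lemma}; and direct image $j_*$ along an open immersion $j$ which is \emph{affine} --- since $X$ is semi-separated, every intersection of affine opens is affine, so the immersions $j_{\alpha_1,\dotsc,\alpha_r}\colon U_{\alpha_1}\cap\dotsb\cap U_{\alpha_r}\to X$, as well as the immersions between such intersections, are affine, and they are very flat by Example~\ref{open-subscheme-very-flat}, so Lemma~\ref{direct-image-lemma} applies in the sheaf form recalled in Section~\ref{very-flat-sheaves-subsecn}. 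I would also use that very flat quasi-coherent sheaves are closed under direct sums, finite products, and kernels of epimorphisms (the hereditary property of the very flat cotorsion pair, which holds locally on $X$).

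Given $\M\in X\Qcoh$, I would argue by induction on~$N$. For $N=1$ the scheme is affine and $\M$ is a quotient of a free, hence very flat, quasi-coherent sheaf. For the inductive step, split $X=W\cup U_N$ with $W=\bigcup_{\alpha<N}U_\alpha$; both $W$ and $W\cap U_N=\bigcup_{\alpha<N}(U_\alpha\cap U_N)$ are quasi-compact, semi-separated, and covered by $N-1$ affines. Choosing a free surjection $\mathcal{P}\twoheadrightarrow\M|_{U_N}$ and forming the fibre product
$$
 \mathcal{E}\;=\;j_{N*}\mathcal{P}\ \times_{\,j_{N*}(\M|_{U_N})}\,\M
$$
along the adjunction unit $\M\to j_{N*}(\M|_{U_N})$, one obtains an epimorphism $\mathcal{E}\twoheadrightarrow\M$ (the pullback of the epimorphism $j_{N*}\mathcal{P}\twoheadrightarrow j_{N*}(\M|_{U_N})$, which is an epimorphism because $j_N$ is affine) such that $\mathcal{E}|_{U_N}\cong\mathcal{P}$ is very flat (over $U_N$ the unit restricts to an isomorphism). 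Thus it suffices to resolve $\mathcal{E}$ by a very flat sheaf; the inductive hypothesis on $W$ yields a very flat $\mathcal{G}_W$ with $\mathcal{G}_W\twoheadrightarrow\mathcal{E}|_W$, and the remaining task is to combine $\mathcal{G}_W$ over $W$ with the free sheaf $\mathcal{P}\cong\mathcal{E}|_{U_N}$ over $U_N$ into a very flat sheaf on $X$ surjecting onto $\mathcal{E}$. This is done by a further fibre-product and gluing construction over the overlap $W\cap U_N$, arranged so that the ``denominators'' of the fibre products are direct images of restrictions of the \emph{free} sheaf $\mathcal{P}$ --- hence very flat --- rather than of $\M$, so that the kernels occurring stay very flat; the glued sheaf is very flat because very flatness is local (Lemma~\ref{descent-lemma}).

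The main obstacle is exactly this bookkeeping. There is no natural morphism from $j_{N*}(\M|_{U_N})$ --- or from any sheaf pushed forward from an overlap --- back down to $\M$, only the adjunction unit going the other way, which is what forces the fibre-product constructions; and one must be careful that the overlap $W\cap U_N$ need not be affine, so that extending a gluing datum from it to $U_N$ has to proceed along the affine immersions $U_\alpha\cap U_N\hookrightarrow U_N$ of its affine covering, i.e.\ the induction is really carried out over the full \v Cech data of the covering $\{U_\alpha\}$. This is precisely the pattern of the proofs of \cite[Lemma~A.1]{EP} (for flat quasi-coherent sheaves) and of \cite[Lemma~4.1.1]{Pcosh}; the arithmetic fact underpinning it --- and also what keeps the very flat resolutions produced this way short, of projective dimension at most~$N$ --- is that $\cO_X(U)$ has projective dimension at most~$1$ over $\cO_X(V)$ whenever $U\subseteq V$ are affine open subschemes (Section~\ref{cta-vfl-modules-subsecn}).
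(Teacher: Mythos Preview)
Your fibre-product construction $\mathcal E=j_{N*}\mathcal P\times_{j_{N*}(\M|_{U_N})}\M$ is exactly the one the paper uses, and your list of ingredients (very flatness preserved by $j_*$ along affine very flat open immersions, by restriction, by kernels of epimorphisms, and by extensions) is correct. The gap is in how you organize the induction.

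You induct on $N$ and invoke the inductive hypothesis on the \emph{scheme} $W=\bigcup_{\alpha<N}U_\alpha$: this produces a very flat sheaf $\G_W$ \emph{on $W$} surjecting onto $\mathcal E|_W$. But $\G_W$ lives on $W$, not on $X$, and the open immersion $W\hookrightarrow X$ is not affine, so you cannot push $\G_W$ forward and stay very flat, nor can you form a fibre product over $j_{W*}(\mathcal E|_W)$ and control the result. You acknowledge this (``the induction is really carried out over the full \v Cech data''), but that sentence is a retraction of the induction you set up, not a repair of it.

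The paper avoids this by never leaving $X$. Its induction variable is an open subset $V\subset X$, not the number of affines in a covering: one maintains a surjection $\cH\twoheadrightarrow\M$ of sheaves \emph{on $X$} with $\cH|_V$ very flat, and enlarges $V$ by one affine $U$ at a time via your fibre-product $\cL$. The crucial extra step---which your write-up does not reach---is checking that $\cL|_V$ remains very flat. This uses that on $V$ the new sheaf $\cL$ is an extension of $\cH|_V$ (very flat by hypothesis) by $(j_*\K)|_V\simeq j'_*(\K|_{U\cap V})$, and that $\K|_{U\cap V}$ is very flat because on $U\cap V$ it is the kernel of an epimorphism between the very flat sheaves $\G|_{U\cap V}$ and $(\cH|_V)|_{U\cap V}$. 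Your ingredients are exactly what is needed here; what is missing is running them in this order, with all sheaves defined globally on $X$ throughout.
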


\begin{proof}
 Let $\M$ be a quasi-coherent sheaf on~$X$.
 Arguing by induction, suppose that we have already constructed
a surjective morphism of quasi-coherent sheaves $\cH\rarrow\M$ on
$X$ such that, for a certain open subscheme $V\subset X$,
the restriction $\cH|_V$ is a very flat quasi-coherent sheaf on~$V$.
 Let $U\subset X$ be an affine open subscheme in $X$; denote its open
embedding morphism by $j\:U\rarrow X$.
 Since $X$ is a semi-separated scheme by assumption and $U$ is
an affine scheme, the morphism~$j$ is affine.

 Consider the restriction/inverse image $\cH|_U=j^*\cH$ on~$U$.
 Any $\cO(U)$\+module is a quotient module of a very flat
$\cO(U)$\+module; so we have a short exact sequence of
quasi-coherent sheaves $0\rarrow\K\rarrow\G\rarrow j^*\cH\rarrow0$
on $U$ with a very flat quasi-coherent sheaf~$\G$.
 The direct image functor $j_*\:U\Qcoh\rarrow X\Qcoh$ is exact
(since the morphism~$j$ is affine); so we have a short exact
sequence of direct images $0\rarrow j_*\K\rarrow j_*\G\rarrow
j_*j^*\cH\rarrow0$ in $X\Qcoh$.

 Consider the adjunction morphism of quasi-coherent sheaves
$\cH\rarrow j_*j^*\cH$ on~$X$.
 Let $0\rarrow j_*\K\rarrow\cL\rarrow\cH\rarrow0$ be the pullback
of the short exact sequence $0\rarrow j_*\K\rarrow j_*\G\rarrow
j_*j^*\cH\rarrow0$ with respect to the morphism $\cH\rarrow j_*j^*\cH$
in $X\Qcoh$.
 Put $W=U\cup V\subset X$; so $W$ is an open subscheme in~$X$.
 We claim that the quasi-coherent sheaf $\cL|_W$ is very flat on~$W$.

 Indeed, it suffices to check that the quasi-coherent sheaf
$\cL|_U$ is very flat on $U$ and the quasi-coherent sheaf $\cL|_V$
is very flat on~$V$.
 The exact functor $j^*\:X\Qcoh\rarrow U\Qcoh$ takes the morphism
$\cH\rarrow j_*j^*\cH$ to the identity map $j^*\cH\rarrow j^*\cH$;
so we have $j^*\cL\simeq j^*j_*\G\simeq\G$.
 The quasi-coherent sheaf $\G$ on $U$ is very flat by construction.
 This proves that the quasi-coherent sheaf $\cL|_U=j^*\cL$ is very flat.

 To prove that the quasi-coherent sheaf $\cL|_V$ is very flat,
denote by $h\:V\rarrow X$ the open embedding morphism.
 Then we have a short exact sequence $0\rarrow h^*j_*\K\rarrow
h^*\cL\rarrow h^*\cH\rarrow0$ of quasi-coherent sheaves on~$V$.
 As the quasi-coherent sheaf $\cH|_V=h^*\cH$ is very flat by
assumption, and the class of very flat quasi-coherent sheaves on $V$
is closed under extensions, it suffices to check that
the quasi-coherent sheaf $h^*j_*\K$ on $V$ is very flat.

 Consider the intersection $U\cap V$, and denote by
$j'\:U\cap V\rarrow V$ and $h'\:U\cap V\rarrow U$ its open embedding
morphisms.
 So the morphism~$j'$ is affine as a base change of an affine
morphism of schemes.
 We have an (obvious) base change isomorphism $h^*j_*\K\simeq
j'_*h'{}^*\K$ of quasi-coherent sheaves on~$V$.
 Furthermore, the two compositions $jh'$ and $hj'$ are one and the same
morphism $U\cap V\rarrow X$; so we have an isomorphism
$h'{}^*j^*\cH\simeq j'{}^*h^*\cH$ of quasi-coherent sheaves on
$U\cap V$.

 Applying the functor $h'{}^*\:U\Qcoh\rarrow(U\cap\nobreak V)\Qcoh$
to the short exact sequence $0\rarrow\K\rarrow\G\rarrow j^*\cH\
\rarrow0$ in $U\Qcoh$, we obtain a short exact sequence
$0\rarrow h'{}^*\K\rarrow h'{}^*\G\rarrow h'{}^*j^*\cH\rarrow0$
in $(U\cap\nobreak V)\Qcoh$.
 The quasi-coherent sheaf $h'{}^*\G$ on $U\cap V$ is very flat,
since the quasi-coherent sheaf $\G$ on $U$ is very flat.
 The quasi-coherent sheaf $h'{}^*j^*\cH\simeq j'{}^*h^*\cH$ on
$U\cap V$ is very flat, since the quasi-coherent sheaf
$h^*\cH=\cH|_V$ on $V$ is very flat.

 As the class of very flat quasi-coherent sheaves on $U\cap V$ is closed
under kernels of surjective morphisms, we can conclude from
the short exact sequence $0\rarrow h'{}^*\K\rarrow h'{}^*\G\rarrow
h'{}^*j^*\cH\rarrow0$ that the quasi-coherent sheaf $h'{}^*\K$
on $U\cap V$ is very flat.
 Finally, since the morphism $j'\:U\cap V\rarrow V$ is affine and
very flat, we arrive to the conclusion that the quasi-coherent sheaf
$h^*j_*\K\simeq j'_*h'{}^*\K$ on $V$ is very flat.
 We have proved that the quasi-coherent sheaf $\cL|_W$ is very flat
on $W=U\cup V$.

 The composition $\cL\rarrow\cH\rarrow\M$ of surjective morphisms of
quasi-coherent sheaves on $X$ is surjective.
 Having started with a surjective morphism of quasi-coherent sheaves
$\cH\rarrow\M$ such that the quasi-coherent sheaf $\cH|_V$ is very flat
on an open subscheme $V\subset X$, and given an affine open subscheme
$U\subset X$, we have constructed an surjective morphism of
quasi-coherent sheaves $\cL\rarrow\M$ such that the quasi-coherent
sheaf $\cL|_W$ is very flat on the open subscheme $W=U\cup V$.

 Starting with $V=\varnothing$ and $\cH=\M$ as the induction base,
and proceeding in this way by adjoining affine open subschemes
$U=U_\alpha$ from a given finite affine open covering $X=\bigcup_\alpha
U_\alpha$ of the scheme $X$, we arrive to the desired surjective
morphism onto $\M$ from a very flat quasi-coherent sheaf on~$X$.
\end{proof}

\begin{prop} \label{very-flat-generator-prop}
 For any quasi-compact semi-separated scheme $X$, there exists a very
flat quasi-coherent sheaf $\F$ on $X$ such that $\F$ is a generator
of the Grothendieck category $X\Qcoh$.
\end{prop}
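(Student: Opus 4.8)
The plan is to produce a very flat generator by taking a suitable direct sum of direct images of structure sheaves from the members of an affine open covering, supplemented (if necessary) by enough very flat quasi-coherent sheaves surjecting onto a known generator. First I would recall that $X\Qcoh$ is a Grothendieck category, so it has a generator $\G$ (for instance, one may take the direct sum $\bigoplus_\alpha j_\alpha{}_!(\cO_{U_\alpha})$ of the extensions by zero along the affine open immersions $j_\alpha\:U_\alpha\rarrow X$ of a finite affine open covering $X=\bigcup_\alpha U_\alpha$, or simply cite \cite[Proposition Tag~077P]{SP} for the existence of a generator). By Lemma~\ref{very-flat-qcoh}, there is a surjective morphism $\F\rarrow\G$ in $X\Qcoh$ with $\F$ a very flat quasi-coherent sheaf on~$X$. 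I would then argue that $\F$ is itself a generator: any generator of a Grothendieck category, when it is an epimorphic image-source in this way, transfers the generating property, since an object that surjects onto a generator is a generator.

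The key step is the last claim: if $\F\twoheadrightarrow\G$ with $\G$ a generator of an abelian category with enough colimits, then $\F$ is a generator. I would justify this by the standard criterion: $\G$ is a generator iff every object $\M$ is a quotient of a direct sum of copies of $\G$; composing such a presentation $\bigoplus_I\G\twoheadrightarrow\M$ with the direct sum $\bigoplus_I\F\twoheadrightarrow\bigoplus_I\G$ of copies of the surjection $\F\twoheadrightarrow\G$ exhibits $\M$ as a quotient of a direct sum of copies of~$\F$; hence $\F$ is a generator. (Equivalently, in the ``detects nonzero morphisms'' formulation: if $0\ne g\:\M\rarrow\N$, pick $\G\rarrow\M$ with nonzero composite, then lift through the epimorphism $\F\twoheadrightarrow\G$ — possible after passing to the relevant quotient, or more simply use the coproduct-of-copies description, which avoids any lifting.) The coproduct description is cleanest and sidesteps projectivity issues, so that is the route I would take.

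The main obstacle is really just the bookkeeping of \emph{exhibiting} an explicit generator to feed into Lemma~\ref{very-flat-qcoh}; the conceptual content — that very flat sheaves surject onto everything (Lemma~\ref{very-flat-qcoh}) and that a surjective image-source of a generator is a generator — is light. One should be slightly careful that Lemma~\ref{very-flat-qcoh} is stated for a single quasi-coherent sheaf, which is exactly what is needed here since one applies it to the single sheaf~$\G$. No iteration or transfinite argument is required at this stage: the transfinite induction has already been carried out inside the proof of Lemma~\ref{very-flat-qcoh}. Thus the proof is short: invoke the existence of a generator, apply Lemma~\ref{very-flat-qcoh} to it to get a very flat $\F$ surjecting onto it, and observe that $\F$ is then a generator by the coproduct criterion.
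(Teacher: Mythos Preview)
Your proposal is correct and follows essentially the same route as the paper: take an arbitrary generator of $X\Qcoh$, apply Lemma~\ref{very-flat-qcoh} to get a very flat sheaf surjecting onto it, and observe that a surjective source of a generator is again a generator. One quibble: the parenthetical suggestion $\bigoplus_\alpha j_\alpha{}_!(\cO_{U_\alpha})$ does not work as stated, since for an open immersion $j_\alpha$ the extension by zero $j_{\alpha!}$ of a quasi-coherent sheaf is generally \emph{not} quasi-coherent---but since you offer the Stacks Project citation as the alternative, the argument is unaffected.
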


\begin{proof}
 The category $X\Qcoh$ of quasi-coherent sheaves on any scheme $X$
has a generator by Gabber's theorem~\cite[Lemma~2.1.7]{Con},
\cite[Proposition Tag~077P]{SP}.
 More specifically, for any quasi-compact quasi-separated scheme $X$,
the category $X\Qcoh$ is locally finitely presentable by
Grothendieck's theorem~\cite[0.5.2.5 and Corollaire~I.6.9.12]{EGA1}.
 So, in the situation at hand with $X$ a quasi-compact semi-separated
scheme, let $\M$ be a generator of $X\Qcoh$.
 By Lemma~\ref{very-flat-qcoh}, there exists a very flat quasi-coherent
sheaf $\F$ on $X$ together with a surjective morphism of quasi-coherent
sheaves $\F\rarrow\M$.
 Then $\F$ is a desired very flat generator of $X\Qcoh$.
\end{proof}

 Given an object $F$ in an abelian category $\sA$, one says that
$F$ has \emph{projective dimension~$\le d$} if $\Ext_\sA^m(F,A)=0$
for all objects $A\in\sA$ and all integers $m>d$.
 Notice that this definition does \emph{not} presume existence of
enough projective objects in~$\sA$.

\begin{lem} \label{very-flat-sheaf-projective-dimension}
 Let $X$ be a quasi-compact semi-separated scheme with a finite affine
open covering $X=\bigcup_{\alpha=1}^N U_\alpha$.
 Then any very flat quasi-coherent sheaf $\F$ on $X$ has projective
dimension at most~$N$ (as an object of $X\Qcoh$).
\end{lem}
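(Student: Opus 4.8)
The plan is to estimate $\Ext^m_{X\Qcoh}(\F,\A)$ directly, for an arbitrary quasi-coherent sheaf $\A$ on $X$, by applying the \v Cech coresolution to the \emph{second} argument rather than to~$\F$.  By Lemma~\ref{cech-coresolution-lemma}, the sheaf $\A$ sits in an exact sequence
$$
 0\rarrow\A\rarrow\K^0\rarrow\K^1\rarrow\dotsb\rarrow\K^{N-1}\rarrow0
$$
in $X\Qcoh$, where $\K^r=\bigoplus_{1\le\alpha_0<\dotsb<\alpha_r\le N}
j_{\alpha_0,\dotsc,\alpha_r}{}_*\bigl(\A|_{U_{\alpha_0,\dotsc,\alpha_r}}\bigr)$.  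Since $X$ is semi-separated, for every nonempty $S=\{\alpha_0,\dotsc,\alpha_r\}\subseteq\{1,\dotsc,N\}$ the intersection $U_S=\bigcap_{\alpha\in S}U_\alpha$ is an affine scheme and the open immersion $j_S\:U_S\rarrow X$ is an affine morphism; hence $j_S{}_*\:U_S\Qcoh\rarrow X\Qcoh$ is exact, and, being right adjoint to the exact restriction functor $j_S^*$, it carries injective objects to injective objects.  Consequently $j_S{}_*$ takes an injective coresolution of $\A|_{U_S}$ in $U_S\Qcoh$ to an injective coresolution of $j_S{}_*(\A|_{U_S})$ in $X\Qcoh$, and the adjunction between $j_S^*$ and $j_S{}_*$ provides natural isomorphisms
$$
 \Ext^i_{X\Qcoh}\bigl(\F,\,j_S{}_*(\A|_{U_S})\bigr)\;\simeq\;
 \Ext^i_{U_S\Qcoh}\bigl(\F|_{U_S},\,\A|_{U_S}\bigr),\qquad i\ge0 .
$$

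Now the very flatness of $\F$ is brought in.  By the definition of a very flat quasi-coherent sheaf, $\F(U_S)$ is a very flat module over the ring $\cO_X(U_S)$, and, as recalled in Section~\ref{cta-vfl-modules-subsecn}, a very flat module has projective dimension at most~$1$; hence the right-hand side above vanishes for $i\ge2$.  Since $\Ext^i_{X\Qcoh}(\F,-)$ preserves finite direct sums, it follows that $\Ext^i_{X\Qcoh}(\F,\K^r)=0$ for all $0\le r\le N-1$ and all $i\ge2$.

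It remains to dimension-shift along the length-$(N-1)$ coresolution $0\rarrow\A\rarrow\K^0\rarrow\dotsb\rarrow\K^{N-1}\rarrow0$.  Applying Lemma~\ref{weakly-adjusted-objects-lemma} to the left exact functor $G=\Hom_{X\Qcoh}(\F,-)\:X\Qcoh\rarrow\sAb$, whose right derived functors are $\Ext^*_{X\Qcoh}(\F,-)$, and to this coresolution — regarded, after appending a zero object in cohomological degree~$N$, as one of length~$N$ — the hypothesis $\boR^mG(\K^r)=0$ for $m>N-r$ holds because $\Ext^m_{X\Qcoh}(\F,\K^r)=0$ already for $m\ge2$ while $N-r\ge1$ for $0\le r\le N-1$; the lemma therefore yields $\Ext^i_{X\Qcoh}(\F,\A)=0$ for all $i>N$.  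As $\A$ was arbitrary, this is exactly the statement that $\F$ has projective dimension at most~$N$ in $X\Qcoh$.  The step I expect to require the most care is the $\Ext$ comparison in the first paragraph: it rests on the \emph{exactness} of $j_S{}_*$ (and on its preservation of injectives), and this is precisely where semi-separatedness of $X$ is indispensable, as it is what forces the intersections $U_S$ to be affine and the immersions $j_S$ to be affine morphisms.  Everything else is routine homological bookkeeping.
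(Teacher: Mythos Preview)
Your argument is correct.  The paper's own proof is a one-liner: it invokes \cite[Theorem~6.3(b)]{PS6}, which says that the projective dimension of any quasi-coherent sheaf $\F$ in $X\Qcoh$ is at most $N-1$ plus the supremum of the projective dimensions of the $\cO(U_\alpha)$\+modules $\F(U_\alpha)$, and then observes that very flat modules have projective dimension~$\le1$.  What you have written is precisely an unpacking of that cited theorem: the \v Cech coresolution of the second argument, the adjunction isomorphism $\Ext^i_X(\F,j_{S*}(-))\simeq\Ext^i_{U_S}(\F|_{U_S},-)$ for the affine open immersions~$j_S$, and the dimension shift along the length-$(N-1)$ coresolution together constitute the proof of \cite[Theorem~6.3(b)]{PS6}.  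So the two proofs are the same at the level of ideas; yours is self-contained within the present paper and reuses Lemmas~\ref{cech-coresolution-lemma} and~\ref{weakly-adjusted-objects-lemma} already set up for other purposes, while the paper's version trades that for brevity by pointing to an external reference.  Your handling of the endpoint case (padding with $\K^N=0$ so that $N-r\ge1$ throughout and the hypothesis of Lemma~\ref{weakly-adjusted-objects-lemma} is met with $d=N$) is exactly the right bookkeeping.
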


\begin{proof}
 Let $\F$ be an arbitrary quasi-coherent sheaf on~$X$.
 Then, by~\cite[Theorem~6.3(b)]{PS6}, the projective dimension of $\F$
in $X\Qcoh$ does not exceed $N-1$ plus the supremum of the projective
dimensions of the $\cO(U_\alpha)$\+modules $\F(U_\alpha)$.
 When $\F$ is a very flat quasi-coherent sheaf,
the $\cO(U_\alpha)$\+module $\F(U_\alpha)$ is very flat, so its
projective dimension does not exceed~$1$ (according to
Section~\ref{cta-vfl-modules-subsecn}).
\end{proof}

\subsection{Generators of finite projective dimension}
\label{generators-of-fpd-subsecn}
 Given a class of objects $\sF$ in an abelian category $\sA$, we denote
by $\sF^{\perp_{\ge1}}\subset\sA$ the class of all objects $C\in\sA$
such that $\Ext^m_\sA(F,C)=0$ for all $F\in\sF$ and $m\ge1$.
 In particular, for a single object $F\in\sA$, we put
$F^{\perp_{\ge1}}=\{F\}^{\perp_{\ge1}}\subset\sA$.

\begin{lem} \label{orthogonal-to-generator-adjusted-to-products}
 Let\/ $\sA$ be an abelian category with infinite products and enough
injective objects, and let\/ $\Lambda$ be an infinite set.
 Let $F\in\sA$ be a generator of\/~$\sA$.
 Consider the class\/ $\sC=F^{\perp_{\ge1}}\subset\sA$ of all objects
$C\in\sA$ such that $\Ext^m_\sA(F,C)=0$ for all $m\ge1$.
 Let\/ $\sK=\sC^\Lambda\subset\sA^\Lambda=\sE$ be the class of all\/
$\Lambda$\+indexed collections of objects from\/~$\sC$.
 Then\/ $\sK$ is a coresolving class of objects in\/ $\sE$ adjusted
to the direct product functor $G=\prod_{\lambda\in\Lambda}\:\sE
\rarrow\sA$ (in the sense of Section~\ref{adjusted-classes-subsecn}).
\end{lem}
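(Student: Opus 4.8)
The plan is to verify the three axioms (i)--(iii) of a coresolving class for $\sK = \sC^\Lambda$, and then the adjustedness condition (iv) with respect to $G = \prod_{\lambda\in\Lambda}$. The first three are nearly formal once we recall what $\sC = F^{\perp_{\ge1}}$ is. Since $F$ is a generator, and $\sA$ has enough injectives, the class $\sC$ is the right Ext-orthogonal to a single object, hence closed under extensions and under cokernels of monomorphisms \emph{between objects of $\sC$} (using the long exact sequence $\Ext^m_\sA(F,C') \to \Ext^m_\sA(F,C) \to \Ext^m_\sA(F,C'') \to \Ext^{m+1}_\sA(F,C')$ and the vanishing at both ends for $m \ge 1$; the case $m = 1$ uses that $\Ext^1_\sA(F,C') = 0$ already, so the map $\Ext^1_\sA(F,C) \to \Ext^1_\sA(F,C'')$ is injective with the former vanishing). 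Every injective object of $\sA$ lies in $\sC$ because $\Ext^m_\sA(F,J) = 0$ for all $m \ge 1$ when $J$ is injective. Passing to $\Lambda$-indexed collections is coordinatewise: extensions, cokernels of monos, and injectives in $\sE = \sA^\Lambda$ are all computed componentwise, and the injectives of $\sE$ are exactly the $\Lambda$-indexed families of injectives of $\sA$ (as recalled in the subsection on direct images under flat affine morphisms). So (i)--(iii) hold for $\sK$.

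For (iv), I would take a short exact sequence $0 \to K' \to K \to K'' \to 0$ in $\sE$ with all three terms in $\sK$; componentwise this is a family of short exact sequences $0 \to C'_\lambda \to C_\lambda \to C''_\lambda \to 0$ in $\sA$ with all terms in $\sC$. I must show $0 \to \prod_\lambda C'_\lambda \to \prod_\lambda C_\lambda \to \prod_\lambda C''_\lambda \to 0$ is exact in $\sA$. Left exactness of $\prod$ is automatic (it is a right adjoint, or simply a limit). The only issue is surjectivity of $\prod_\lambda C_\lambda \to \prod_\lambda C''_\lambda$, equivalently the vanishing of the obstruction $\bigl(\prod\nolimits^{(1)}_\lambda\bigr)(C'_\lambda)_\lambda = \boR^1 G\,(C'_\lambda)_\lambda$. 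Here is where I use that $F$ is a \emph{generator}: for a generator $F$, the functor $\Hom_\sA(F,-)$ is faithful and, more to the point, an object $A \in \sA$ vanishes iff $\Hom_\sA(F,A) = 0$. Apply $\Hom_\sA(F,-)$ to the sequence of products. Since $F$ is small with respect to products only if... — no, products are not an issue: $\Hom_\sA(F,-)$ commutes with arbitrary products for \emph{any} object $F$ (it is a representable functor, hence continuous). So $\Hom_\sA(F, \prod_\lambda C_\lambda) = \prod_\lambda \Hom_\sA(F,C_\lambda)$, and likewise for $C'$ and $C''$. Now the sequence $0 \to \prod_\lambda \Hom_\sA(F,C'_\lambda) \to \prod_\lambda \Hom_\sA(F,C_\lambda) \to \prod_\lambda \Hom_\sA(F,C''_\lambda) \to \prod_\lambda \Ext^1_\sA(F,C'_\lambda) = \prod_\lambda 0 = 0$ is obtained by taking the product over $\lambda$ of the long exact sequences of $\Ext_\sA(F,-)$, truncated using $\Ext^1_\sA(F,C'_\lambda) = 0$; products of exact sequences of abelian groups are exact, so this is exact. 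Hence $\Hom_\sA(F, \prod_\lambda C_\lambda) \to \Hom_\sA(F, \prod_\lambda C''_\lambda)$ is surjective.

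It remains to deduce that $\prod_\lambda C_\lambda \to \prod_\lambda C''_\lambda$ is itself surjective in $\sA$. Let $Q$ be the cokernel of this map. Because $\prod_\lambda C''_\lambda \to Q$ is an epimorphism and $F$ is a generator, $\Hom_\sA(F,-)$ is right exact on epimorphisms out of the image — more precisely, applying $\Hom_\sA(F,-)$ to $\prod_\lambda C_\lambda \to \prod_\lambda C''_\lambda \to Q \to 0$ and using left exactness gives $\Hom_\sA(F,Q) = \operatorname{coker}\bigl(\Hom_\sA(F,\prod_\lambda C_\lambda) \to \Hom_\sA(F,\prod_\lambda C''_\lambda)\bigr)$, which we have just shown is $0$; since $F$ is a generator this forces $Q = 0$. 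Therefore $0 \to \prod_\lambda G(K') \to \prod_\lambda G(K) \to \prod_\lambda G(K'') \to 0$ — that is, $0 \to G(K') \to G(K) \to G(K'') \to 0$ — is exact in $\sA$, proving (iv). The main obstacle is precisely this last deduction: one has to exploit the generator hypothesis (rather than mere faithfulness of some Hom-functor) to conclude exactness in $\sA$ from exactness after applying $\Hom_\sA(F,-)$, and to remember that representable functors commute with products so that no AB4$^*$-type hypothesis on $\sA$ is needed at this step. Everything else is a routine unwinding of the componentwise structure of $\sA^\Lambda$.
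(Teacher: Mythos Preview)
Your approach is essentially the same as the paper's: verify that $\sC=F^{\perp_{\ge1}}$ is coresolving (routine), then for adjustedness show that the product map $\prod_\lambda C_\lambda\to\prod_\lambda C''_\lambda$ is epi by testing against the generator $F$. The paper does the last step slightly more directly --- it simply says that since $F$ is a generator, it suffices to lift each morphism $F\to\prod_\lambda C''_\lambda$ to $\prod_\lambda C_\lambda$, and such a lift is obtained componentwise from $\Ext^1_\sA(F,C'_\lambda)=0$ --- whereas you pass through products of Hom groups; but the content is identical.

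There is, however, a genuine slip in your final paragraph. The displayed identity
\[
\Hom_\sA(F,Q)=\operatorname{coker}\bigl(\Hom_\sA(F,\textstyle\prod_\lambda C_\lambda)\to\Hom_\sA(F,\prod_\lambda C''_\lambda)\bigr)
\]
does \emph{not} follow from left exactness of $\Hom_\sA(F,-)$: the sequence $\prod_\lambda C_\lambda\to\prod_\lambda C''_\lambda\to Q\to 0$ is right exact, and a left exact functor need not preserve it. In general there can be morphisms $F\to Q$ that do not factor through $\prod_\lambda C''_\lambda$. The correct argument for ``$\Hom_\sA(F,p)$ surjective $\Rightarrow$ $p$ epi'' is the one the generator definition hands you: if $p$ were not epi, the quotient map $q\colon\prod_\lambda C''_\lambda\to Q$ would be nonzero, so by the generator property there would exist $g\colon F\to\prod_\lambda C''_\lambda$ with $qg\neq 0$; but $g=p\circ h$ for some $h$ by surjectivity of $\Hom_\sA(F,p)$, whence $qg=qph=0$, a contradiction. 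With this one-line fix your proof is complete and matches the paper's.
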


\begin{proof}
 One can easily see that $\sC$ is a coresolving class of objects in
$\sA$, and it follows that $\sK$ is a coresolving class of objects
in~$\sE$.
 (This holds for the right $\Ext^{\ge1}$\+orthogonal class
$\sC=\sF^{\perp_{\ge1}}$ to an arbitrary class of objects
$\sF\subset\sA$.)
 To show that the class $\sK$ is adjusted to the direct product
functor $G$, we argue as in~\cite[Lemma~9.3]{PS6}.

 Let $0\rarrow C_\lambda\rarrow B_\lambda\rarrow A_\lambda\rarrow0$
be a $\Lambda$\+indexed family of short exact sequences in $\sA$
with $C_\lambda\in\sC$ for all $\lambda\in\Lambda$.
 In order to show that $0\rarrow\prod_{\lambda\in\Lambda}C_\lambda
\rarrow\prod_{\lambda\in\Lambda}B_\lambda\rarrow
\prod_{\lambda\in\Lambda}A_\lambda\rarrow0$ is a short exact sequence
in $\sA$, we need to check that $\prod_{\lambda\in\Lambda}B_\lambda
\rarrow\prod_{\lambda\in\Lambda}A_\lambda$ is an epimorphism.
 As $F$ is a generator of $\sA$, it suffices to check that
every morphism $F\rarrow\prod_{\lambda\in\Lambda}A_\lambda$ can be
lifted to a morphism $F\rarrow\prod_{\lambda\in\Lambda}B_\lambda$.
 Now the datum of a morphism $F\rarrow\prod_{\lambda\in\Lambda}
A_\lambda$ is equivalent to the datum of a family of morphisms
$F\rarrow A_\lambda$, \,$\lambda\in\Lambda$.
 Since $\Ext^1_\sA(F,C_\lambda)=0$ by assumption, any morphism
$F\rarrow A_\lambda$ can be lifted to a morphism $F\rarrow B_\lambda$.
 Any family of such liftings defines the desired morphism
$F\rarrow\prod_{\lambda\in\Lambda}B_\lambda$.
\end{proof}

\begin{lem} \label{coresolution-dimension-for-orthogonal-class}
 Let\/ $\sA$ be an abelian category and\/ $\sF\subset\sA$ be a class of
(some) objects of projective dimension\/~$\le d$ in\/~$\sA$.
 Then every object of\/ $\sA$ has coresolution dimension at most~$d$
with respect to the coresolving class\/ $\sC=\sF^{\perp_{\ge1}}\subset
\sA$ of all objects $C\in\sA$ such that\/ $\Ext^m_\sA(F,C)=0$
for all $F\in\sF$ and $m\ge1$.
\end{lem}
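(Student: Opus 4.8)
The plan is the classical cosyzygy (dimension-shifting) argument. Recall first that $\sC=\sF^{\perp_{\ge1}}$ is a coresolving class in $\sA$: closure under extensions and under cokernels of monomorphisms follows from the long exact sequences of the functors $\Ext_\sA(F,-)$ for $F\in\sF$, and every injective object of $\sA$ belongs to $\sC$ --- as was already observed in the proof of Lemma~\ref{orthogonal-to-generator-adjusted-to-products}. Since $\sA$ has enough injective objects, every object $A\in\sA$ has at least one coresolution $0\rarrow A\rarrow C^0\rarrow C^1\rarrow C^2\rarrow\dotsb$ with all $C^i\in\sC$ (for instance, an injective coresolution). Writing $Z^0=A$ and letting $Z^{i+1}$ be the image of $C^i\rarrow C^{i+1}$, so that the coresolution is spliced from short exact sequences $0\rarrow Z^i\rarrow C^i\rarrow Z^{i+1}\rarrow0$, the whole statement reduces to the single claim that $Z^d\in\sC$; indeed, granting this, the finite exact sequence
$$
 0\lrarrow A\lrarrow C^0\lrarrow\dotsb\lrarrow C^{d-1}\lrarrow Z^d\lrarrow0
$$
is a coresolution of $A$ by objects of $\sC$ of length $\le d$.

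To prove $Z^d\in\sC$, fix $F\in\sF$. Because $C^i\in\sC$, we have $\Ext^k_\sA(F,C^i)=0$ for all $i\ge0$ and all $k\ge1$; hence, in the long exact sequence of $\Ext_\sA(F,-)$ attached to $0\rarrow Z^i\rarrow C^i\rarrow Z^{i+1}\rarrow0$, the connecting homomorphism is an isomorphism $\Ext^m_\sA(F,Z^{i+1})\simeq\Ext^{m+1}_\sA(F,Z^i)$ for every $m\ge1$ and $i\ge0$. Composing $d$ of these isomorphisms gives $\Ext^m_\sA(F,Z^d)\simeq\Ext^{m+d}_\sA(F,A)$ for all $m\ge1$, and the right-hand group vanishes since $m+d>d$ and $F$ has projective dimension $\le d$ in $\sA$. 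Thus $\Ext^m_\sA(F,Z^d)=0$ for all $F\in\sF$ and $m\ge1$, i.e.\ $Z^d\in\sF^{\perp_{\ge1}}=\sC$, which is what we wanted.

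I do not expect any genuine difficulty here: the argument is entirely formal, and the only points needing a little attention are the index bookkeeping in the iterated dimension shift and the degenerate case $d=0$ --- there $\sF$ consists of objects of projective dimension~$0$, so $\sC=\sA$ and every object trivially has $\sC$\+coresolution dimension~$0$, in agreement with the (empty) truncation above. If one wishes to avoid the standing ``enough injectives'' hypothesis, one should read the statement as concerning objects $A$ that admit some $\sC$\+coresolution at all, the ambient application in the next subsection providing enough injectives in any case.
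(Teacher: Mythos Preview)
Your argument is correct and is precisely the standard cosyzygy/dimension-shifting computation that the paper alludes to with its one-line proof ``This is straightforward.'' You have simply spelled out the details. The only small remark is that the hypothesis ``$\sA$ has enough injectives'' is indeed implicit in the paper's framework (the very definition of a coresolving class in Section~\ref{adjusted-classes-subsecn} is made under that standing assumption), so your final caveat is unnecessary in context.
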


\begin{proof}
 This is straightforward.
\end{proof}

\begin{prop} \label{generator-of-finite-projective-dimension-prop}
 Let\/ $\sA$ be an abelian category with infinite products and enough
injective objects, and let\/ $\Lambda$ be an infinite set.
 Let $F\in\sA$ be a generator of\/~$\sA$; assume that the projective
dimension of $F$ in\/ $\sA$ does not exceed~$d$.
 Consider the direct product functor\/ $G=\prod_{\lambda\in\Lambda}\:
\sE=\sA^\Lambda\rarrow\sA$.
 Then one has\/ $\boR^iG=0$ for all $i>d$.
 So the abelian category\/ $\sA$ satisfies Roos' axiom\/
$\AB4^*$\+$d$.
\end{prop}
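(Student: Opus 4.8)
The plan is to assemble Lemmas~\ref{orthogonal-to-generator-adjusted-to-products} and~\ref{coresolution-dimension-for-orthogonal-class} and feed the result into Lemma~\ref{adjusted-class-lemma}; between them these carry all the substance, so the argument for the Proposition is short. First I would set $\sC=F^{\perp_{\ge1}}\subset\sA$ and $\sK=\sC^\Lambda\subset\sE=\sA^\Lambda$. By Lemma~\ref{orthogonal-to-generator-adjusted-to-products}, $\sK$ is a coresolving class in $\sE$ adjusted to the direct product functor $G=\prod_{\lambda\in\Lambda}$. It therefore remains to exhibit, for an arbitrary object $E\in\sE$, a coresolution of $E$ by objects of $\sK$ that is concentrated in cohomological degrees $0$ through~$d$.

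Second, I would build such a coresolution coordinate-wise. Write $E=(A_\lambda)_{\lambda\in\Lambda}$ with $A_\lambda\in\sA$. Since the generator $F$ has projective dimension $\le d$, Lemma~\ref{coresolution-dimension-for-orthogonal-class} (applied with $\sF=\{F\}$) provides, for each $\lambda$, an exact sequence $0\rarrow A_\lambda\rarrow C_\lambda^0\rarrow\dotsb\rarrow C_\lambda^d\rarrow0$ in $\sA$ with all $C_\lambda^i\in\sC$. Since a sequence in $\sA^\Lambda$ is exact precisely when each of its coordinates is exact, assembling these over $\lambda$ yields an exact sequence $0\rarrow E\rarrow K^0\rarrow\dotsb\rarrow K^d\rarrow0$ in $\sE$ with $K^i=(C_\lambda^i)_{\lambda\in\Lambda}\in\sC^\Lambda=\sK$ for $0\le i\le d$. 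Padding on the right with zeros — and noting that $0\in\sK$, being the cokernel of the identity monomorphism of an injective object, so permitted by axiom~(ii) of a coresolving class — produces an honest $\sK$\+coresolution $K^\bu$ of $E$ with $K^i=0$ for $i>d$.

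Third, I would apply Lemma~\ref{adjusted-class-lemma} to the left exact functor $G$ and the coresolution $K^\bu$ by objects of the adjusted coresolving class $\sK$, obtaining natural isomorphisms $\boR^iG(E)\simeq H^iG(K^\bu)$ for all $i\ge0$. As $K^\bu$ is concentrated in degrees $\le d$, so is $G(K^\bu)$, whence $H^iG(K^\bu)=0$ and therefore $\boR^iG(E)=0$ for $i>d$. Since $E\in\sE$ was arbitrary, $\boR^iG=0$ for $i>d$. Recalling that $\boR^iG$ computes the derived product functor $\prod_{\lambda\in\Lambda}^{(i)}$, this is exactly the vanishing demanded by the Roos axiom $\AB4^*$\+$d$ for the (infinite) index set~$\Lambda$; for finite $\Lambda$ the product functor is exact and the condition is automatic, so $\sA$ satisfies $\AB4^*$\+$d$ outright.

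I do not anticipate any real obstacle: the conceptual work has been done in Lemmas~\ref{orthogonal-to-generator-adjusted-to-products}--\ref{coresolution-dimension-for-orthogonal-class}, and what remains is a formal assembly. The only two points needing a moment's care are that the coordinate-wise coresolutions can be taken of a uniform length~$d$ — which is precisely the content of Lemma~\ref{coresolution-dimension-for-orthogonal-class} — and that passing from these to a coresolution in $\sE$ is compatible with adjustedness, which is immediate from the product-of-categories structure of $\sE=\sA^\Lambda$. Alternatively, one may bypass Lemma~\ref{adjusted-class-lemma} and invoke Lemma~\ref{weakly-adjusted-objects-lemma} directly on the finite coresolution $0\rarrow E\rarrow K^0\rarrow\dotsb\rarrow K^d\rarrow0$, using that every object of $\sK$ is $G$\+acyclic (as established inside the proof of Lemma~\ref{adjusted-class-lemma}).
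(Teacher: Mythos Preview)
Your proof is correct and follows essentially the same approach as the paper: set $\sC=F^{\perp_{\ge1}}$ and $\sK=\sC^\Lambda$, invoke Lemma~\ref{orthogonal-to-generator-adjusted-to-products} for adjustedness, Lemma~\ref{coresolution-dimension-for-orthogonal-class} for the coresolution dimension bound, and conclude via Lemma~\ref{adjusted-class-lemma}. You have simply spelled out a few details (the coordinate-wise assembly of the coresolution, the zero-padding) that the paper leaves implicit, and noted a harmless alternative via Lemma~\ref{weakly-adjusted-objects-lemma}.
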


\begin{proof}
 This is a weak version of a resulf of Roos~\cite[Theorem~1.3]{Roos}.
 As the proof in~\cite{Roos} is somewhat complicated, we offer
a simple alternative proof of the weak version stated in
the proposition.

 Put $\sC=F^{\perp_{\ge1}}\subset\sA$ and
$\sK=\sC^\Lambda\subset\sE$.
 Then the coresolving class $\sK$ is adjusted to the functor $G$ by
Lemma~\ref{orthogonal-to-generator-adjusted-to-products}, and
the coresolution dimension of any object of $\sE$ with respect
to the class $\sK$ does not exceed~$d$ by
Lemma~\ref{coresolution-dimension-for-orthogonal-class}.
 The desired assertion follows by virtue of
Lemma~\ref{adjusted-class-lemma}.
\end{proof}

\subsection{Roos axiom for quasi-compact semi-separated schemes~II}
 The following corollary is a weaker version of
Theorem~\ref{roos-axiom-cech-coresolution-semi-separated}.
 We formulate and prove it here in order to illusrate the workings
of the approach based on existence of a generator of finite
projective dimension.

\begin{cor} \label{roos-axiom-very-flat-generator-semi-separated}
 Let $X$ be a quasi-compact semi-separated scheme with an affine open
covering $X=\bigcup_{\alpha=1}^N U_\alpha$.
 Then the Grothendieck abelian category of quasi-coherent sheaves
$X\Qcoh$ satisfies the axiom\/ $\AB4^*$\+$N$.
\end{cor}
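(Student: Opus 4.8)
The plan is to deduce this corollary directly from the machinery assembled in Sections~\ref{generator-secn}. First I would recall that, by Proposition~\ref{very-flat-generator-prop}, the Grothendieck category $X\Qcoh$ admits a generator $\F$ which is a very flat quasi-coherent sheaf on~$X$. Next, by Lemma~\ref{very-flat-sheaf-projective-dimension}, this very flat generator $\F$ has projective dimension at most~$N$ as an object of the abelian category $X\Qcoh$, where $N$ is the number of affine open subschemes in the given covering.

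Having produced a generator of projective dimension $\le N$, I would invoke Proposition~\ref{generator-of-finite-projective-dimension-prop} with $\sA=X\Qcoh$ and $d=N$. That proposition requires $\sA$ to be an abelian category with infinite products and enough injective objects: the category $X\Qcoh$ is a Grothendieck abelian category, so it has enough injective objects and all (small) products exist. Applying the proposition to any infinite index set~$\Lambda$ then yields $\boR^i\bigl(\prod_{\lambda\in\Lambda}\bigr)=0$ for all $i>N$, which is precisely the assertion that $X\Qcoh$ satisfies $\AB4^*$\+$N$. For finite $\Lambda$ the derived functors of direct product vanish in positive degrees for trivial reasons (a finite product is exact), so the axiom holds in that case as well.

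I expect no genuine obstacle here: the corollary is essentially a bookkeeping combination of three earlier results, and the only point that warrants a sentence of care is checking that the hypotheses of Proposition~\ref{generator-of-finite-projective-dimension-prop} are met, namely the existence of products and of enough injectives, both of which are standard properties of Grothendieck categories. The slightly weaker numerical bound $n=N$ (as opposed to $n=N-1$ obtained from the \v Cech coresolution in Theorem~\ref{roos-axiom-cech-coresolution-semi-separated}) is the price paid for the very flat argument, since a very flat module can have projective dimension~$1$ rather than~$0$, and this loss of one unit propagates through Lemma~\ref{very-flat-sheaf-projective-dimension}.
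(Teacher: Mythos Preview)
Your proposal is correct and follows essentially the same route as the paper's proof: invoke Proposition~\ref{very-flat-generator-prop} to obtain a very flat generator~$\F$, use Lemma~\ref{very-flat-sheaf-projective-dimension} to bound its projective dimension by~$N$, and then apply Proposition~\ref{generator-of-finite-projective-dimension-prop} with $\sA=X\Qcoh$ and $d=N$. The extra remarks you include (verifying the Grothendieck-category hypotheses, treating finite~$\Lambda$, and explaining the loss of one unit compared to Theorem~\ref{roos-axiom-cech-coresolution-semi-separated}) are accurate and harmless elaborations on the same argument.
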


\begin{proof}
 By Proposition~\ref{very-flat-generator-prop}, there exists
a very flat quasi-coherent sheaf $\F$ on $X$ such that $\F$ is
a generator of $X\Qcoh$.
 Lemma~\ref{very-flat-sheaf-projective-dimension} tells us that
the projective dimension of the object $\F\in X\Qcoh$ does not
exceed~$N$.
 So Proposition~\ref{generator-of-finite-projective-dimension-prop}
is applicable for $\sA=X\Qcoh$ and $d=N$.
\end{proof}

\begin{rem} \label{locally-countably-presented}
 The proof of
Corollary~\ref{roos-axiom-very-flat-generator-semi-separated} above
is based on the notion of a very flat quasi-coherent sheaf.
 Alternatively, there is a similar argument proving the same theorem
based on the notion of a \emph{locally countably presented} flat
quasi-coherent sheaf.
 A quasi-coherent sheaf $\M$ on a scheme $X$ is said to be
\emph{locally countably presented}~\cite[Section~3]{PS6} if
the $\cO_X(U)$\+module $\M(U)$ is countably presented for every
affine open subscheme $U\subset X$.
 It suffices to check this condition for affine open subschemes
$U=U_\alpha$ forming a given affine open covering
$X=\bigcup_\alpha U_\alpha$ of a scheme~$X$ \,\cite[Lemma~3.1]{PS6}.
 By~\cite[Theorem~3.5 or~4.5]{PS6}, any flat quasi-coherent sheaf on
a quasi-compact quasi-separated (or more generally, countably
quasi-compact and countably quasi-separated) scheme $X$ is a direct
limit (in fact, an $\aleph_1$\+direct limit) of locally countably
presented flat quasi-coherent sheaves.
 
 For a quasi-compact semi-separated scheme $X$, it is known
since~\cite[Section~2.4]{M-n} (see~\cite[Lemma~A.1]{EP} for a perhaps
clearer argument) that any quasi-coherent sheaf $\M$ on $X$ is
a quotient sheaf of a flat quasi-coherent sheaf~$\G$.
 Then $\G$ is a direct limit of locally countably presented flat
quasi-coherent sheaves.
 So $\M$ is a quotient sheaf of an infinite direct sum of locally
countably presented flat quasi-coherent sheaves.
 Take $\M$ to be a generator of $X\Qcoh$.
 Any countably presented flat module over a ring has projective
dimension at most~$1$ \,\cite[Corollary~2.23]{GT}.
 If $X$ covered by $N$ open affines, then it follows by virtue
of~\cite[Theorem~6.3(b)]{PS6} that any locally countably presented
flat quasi-coherent sheaf on $X$ has projective dimension at most~$N$
\,\cite[Corollary~6.6]{PS6}.
 Hence the same applies to any infinite direct sum of locally
countably presented flat quasi-coherent sheaves on~$X$.
 Once again, we have constructed a generator of projective dimension
at most~$N$ for $X\Qcoh$, and it remains to refer to
Proposition~\ref{generator-of-finite-projective-dimension-prop} in
order to finish the proof of the theorem.
\end{rem}

\begin{rem}
 The argument from Remark~\ref{locally-countably-presented} allows
also to prove the Roos axiom for the categories $X\Qcoh$ of
quasi-coherent sheaves on certain kind of (quasi-compact
semi-separated) stacks~$X$.
 Specifically, the categories $X\Qcoh$ for the stacks we are
interested in here are interpreted as the abelian categories of
\emph{comodules} over certain corings~$C$ over commutative (or
more generally, associative) rings~$A$; so $X\Qcoh=C\Comodl$.
 The conditions on a coring $C$ guaranteeing existence of
a generator of finite projective dimension in $C\Comodl$ are
spelled out in the paper~\cite{Pflcc} as~\cite[conditions~($*$)
and~(${*}{*}$) in Section~5]{Pflcc}, and the related argument
can be found in~\cite[last paragraph of the proof of
Corollary~5.5]{Pflcc}.
 For a discussion of the connection between stacks and corings,
see~\cite[Section~2]{KR} or~\cite[Example~2.5]{Pflcc}.
\end{rem}

\begin{quest}
 Is the dimension bound provided by
Corollary~\ref{roos-axiom-very-flat-generator-semi-separated} sharp?
 This question was asked in the early first version of this paper,
when the author was unaware of the preprint~\cite{HX}, and the second
version of the preprint~\cite{HPSV} (with its~\cite[Theorem~8.27]{HPSV})
did not appear yet.
 Now we know that the answer is negative:
Theorem~\ref{roos-axiom-cech-coresolution-semi-separated} improves
the numerical bound of 
Corollary~\ref{roos-axiom-very-flat-generator-semi-separated}.

 Still, it would be interesting to have a natural, well-behaved class
of quasi-coherent sheaves adjusted to infinite products in $X\Qcoh$
and providing finite coresolutions of length $N-1$, where
a quasi-compact semi-separated scheme $X$ is covered by $N$ affine
open subschemes.
 This would provide an alternative proof of
Theorem~\ref{roos-axiom-cech-coresolution-semi-separated}
in view of Lemma~\ref{adjusted-class-lemma}.

 Our remaining question is: does the class of \emph{dilute
quasi-coherent sheaves} \cite[Section~4.1]{M-n},
\cite[Section~4.2]{Pcosh} satisfy these conditions?
 Specifically, are all families of dilute quasi-coherent sheaves
adjusted to infinite products in $X\Qcoh$\,?
 Notice that the coresolution dimension of any quasi-coherent sheaf on
$X$ with respect to the coresolving subcategory of dilute quasi-coherent
sheaves does not exceed $N-1$ \,\cite[Lemma~4.7.1(a)]{Pcosh}.

 A natural, well behaved class of quasi-coherent sheaves adjusted to
infinite products in $X\Qcoh$ and providing finite coresolutions of
length $N$ is suggested in the following
Section~\ref{contraadjusted-subsecn}.
\end{quest}

\subsection{Contraadjusted quasi-coherent sheaves}
\label{contraadjusted-subsecn}
 Given a quasi-compact semi-separated scheme $X$, the discussion in
Section~\ref{generators-of-fpd-subsecn} provides a coresolving class
$\sC\subset\sA=X\Qcoh$ such that all the objects of $\sA$ have finite
(uniformly bounded) $\sC$\+coresolution dimensions and the class
$\sK=\sC^\Lambda\subset\sA^\Lambda=\sE$ is adjusted to the functor
of direct product $G=\prod_{\lambda\in\Lambda}\:\sE\rarrow\sA$.
 But this construction is not natural, in that it depends on the choice
of a very flat generator $\F\in X\Qcoh$.

 In this section we discuss a straightforward modification or particular
case of the construction of Section~\ref{generators-of-fpd-subsecn}
providing a natural coresolving class in $X\Qcoh$ adjusted to infinite
products and such that the coresolution dimensions are finite.
 This class is called the class of \emph{contraadjusted} quasi-coherent
sheaves.

 Let $X$ be a scheme.
 We will denote by $\Ext_X^*({-},{-})$ the functor $\Ext^*$ in
the abelian category of quasi-coherent sheaves $X\Qcoh$.

 A quasi-coherent sheaf $\C$ on $X$ is said to be
\emph{contraadjusted}~\cite[Section~2.5]{Pcosh} if $\Ext^1_X(\F,\C)=0$
for every very flat quasi-coherent sheaf $\F$ on~$X$.
 For example, any cotorsion quasi-coherent sheaf on~$X$ (in the sense
of~\cite{EE}) is contraadjusted, since any very flat quasi-coherent
sheaf is flat.
 A quasi-coherent sheaf on an affine scheme $U$ is contraadjusted if
and only if it corresponds to a contraadjusted $\cO(U)$\+module.

 By the definition, the class of contraadjusted quasi-coherent sheaves
is closed under extensions and direct summands in $X\Qcoh$.
 For any quasi-compact quasi-separated morphism of schemes
$f\:Y\rarrow X$, the direct image functor $f_*\:Y\Qcoh\rarrow X\Qcoh$
takes contraadjusted quasi-coherent sheaves on $Y$ to contraadjusted
quasi-coherent sheaves on~$X$ \,\cite[Section~2.5]{Pcosh},
\cite[Lemma~1.7(c)]{Pal}.
 
 The following two propositions say that the pair of classes (very flat
quasi-coherent sheaves, contraadjusted quasi-coherent sheaves) is
a hereditary complete cotorsion pair in $X\Qcoh$ for a quasi-compact
semi-separated scheme~$X$.

\begin{prop} \label{vfl-cotorsion-pair-on-scheme}
 Let $X$ be a quasi-compact semi-separated scheme.
 Then \par
\textup{(a)} any quasi-coherent sheaf on $X$ is a quotient sheaf of
a very flat quasi-coherent sheaf by a subsheaf that is
a contraadjusted quasi-coherent sheaf; \par
\textup{(b)} any quasi-coherent sheaf on $X$ can be embedded as
subsheaf into a contraadjusted quasi-coherent sheaf in such a way
that the quotient sheaf is a very flat quasi-coherent sheaf.
\end{prop}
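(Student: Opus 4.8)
The plan is to establish part~(a) by revisiting the construction in the proof of Lemma~\ref{very-flat-qcoh} while keeping track of the kernel of the resulting surjection, and then to deduce part~(b) from part~(a) by the pullback argument of Salce~\cite{Sal}.

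For part~(a), let $\M$ be a quasi-coherent sheaf on~$X$. Lemma~\ref{very-flat-qcoh} already produces a very flat quasi-coherent sheaf surjecting onto~$\M$; what is missing is that the kernel be contraadjusted. I would re-run the inductive construction from that proof with a single modification. At each step, when one passes to an affine open subscheme $U\subset X$ with (affine) open embedding $j\:U\rarrow X$ and chooses a short exact sequence $0\rarrow\K\rarrow\G\rarrow j^*\cH\rarrow0$ in $U\Qcoh$ with $\G$ very flat, I would take this sequence to be a \emph{special very flat precover} of $j^*\cH$, which is possible because the very flat cotorsion pair in $\cO(U)\Modl$ is complete (Section~\ref{cta-vfl-modules-subsecn}); this has the additional effect that $\K$ is a contraadjusted quasi-coherent sheaf on~$U$. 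The verification in the proof of Lemma~\ref{very-flat-qcoh} that $\cL|_W$ is very flat then goes through verbatim, because that argument used only that $\G$ is very flat and that the restriction $(j^*\cH)|_{U\cap V}=\cH|_{U\cap V}$ is very flat (the latter because $U\cap V\subset V$), and it never invoked any special property of~$\K$. In parallel, one carries the inductive claim that the kernel~$\N$ of the current surjection $\cH\rarrow\M$ is contraadjusted: this is trivial at the base of the induction ($V=\varnothing$, $\cH=\M$, $\N=0$), and at the inductive step the kernel of the new surjection $\cL\rarrow\cH\rarrow\M$ fits into a short exact sequence $0\rarrow j_*\K\rarrow\ker(\cL\rarrow\M)\rarrow\N\rarrow0$ in which $j_*\K$ is contraadjusted (direct image under the affine morphism~$j$ preserves contraadjustedness, Section~\ref{contraadjusted-subsecn}) and $\N$ is contraadjusted by the induction hypothesis, whence $\ker(\cL\rarrow\M)$ is contraadjusted since contraadjusted quasi-coherent sheaves are closed under extensions. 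After the finitely many steps, one for each member of a finite affine open covering $X=\bigcup_{\alpha=1}^N U_\alpha$, one obtains a short exact sequence $0\rarrow\C\rarrow\F\rarrow\M\rarrow0$ with $\F$ very flat and $\C$ contraadjusted, which is part~(a).

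Part~(b) follows from part~(a) using that $X\Qcoh$ has enough injective objects, that every injective quasi-coherent sheaf is contraadjusted (trivially), and that contraadjusted quasi-coherent sheaves are closed under extensions. Given $\M\in X\Qcoh$, choose a monomorphism $\M\rarrow\I$ into an injective quasi-coherent sheaf and apply part~(a) to the cokernel $\I/\M$ to get a short exact sequence $0\rarrow\C\rarrow\F\rarrow\I/\M\rarrow0$ with $\F$ very flat and $\C$ contraadjusted. Let $\J$ be the pullback of the pair of morphisms $\F\rarrow\I/\M\larrow\I$; then there are short exact sequences $0\rarrow\C\rarrow\J\rarrow\I\rarrow0$ and $0\rarrow\M\rarrow\J\rarrow\F\rarrow0$. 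The first exhibits $\J$ as an extension of the contraadjusted sheaves $\I$ and~$\C$, so $\J$ is contraadjusted, and the second is then the desired embedding of~$\M$ into a contraadjusted quasi-coherent sheaf with very flat cokernel.

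The main obstacle is that Lemma~\ref{very-flat-qcoh} gives no control over the kernel of the surjection onto~$\M$, so the crux is to propagate contraadjustedness of the kernel through the same induction without disturbing the (already delicate) verification that the constructed sheaf is globally very flat; the point is that replacing an arbitrary very flat cover of $j^*\cH$ by a special very flat precover on each affine chart leaves that verification untouched while supplying exactly the missing control. A more machinery-heavy alternative would be to observe that the class of very flat quasi-coherent sheaves on~$X$ is deconstructible---being a local condition, this reduces to the deconstructibility of very flat modules over the rings $\cO(U_\alpha)$, which is part of the Eklof--Trlifaj description recalled in Section~\ref{cta-vfl-modules-subsecn}---and then to invoke the general theorem that a deconstructible class in a Grothendieck category is the left-hand class of a complete cotorsion pair; the direct argument above, however, is more economical here. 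Finally, the \emph{hereditary} property of the cotorsion pair requires nothing new, being the closure of very flat quasi-coherent sheaves under kernels of surjections recorded in Section~\ref{very-flat-sheaves-subsecn}.
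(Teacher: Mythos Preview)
Your proof is correct and follows essentially the same approach as the paper: for part~(a) the paper likewise instructs one to re-run the construction of Lemma~\ref{very-flat-qcoh} choosing each $\K$ to be contraadjusted (via the completeness of the very flat cotorsion pair over $\cO(U)$), and observes that the resulting kernel is then a finitely iterated extension of the sheaves $j_{\alpha*}\K_\alpha$, hence contraadjusted; your tracking of the extension $0\rarrow j_*\K\rarrow\ker(\cL\rarrow\M)\rarrow\N\rarrow0$ makes this explicit. For part~(b) the paper simply says the remaining argument is ``easier and more standard'', and your Salce pullback is exactly that standard argument.
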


\begin{prop} \label{vfl-cotorsion-pair-on-scheme-hereditary}
 Let $X$ be a quasi-compact semi-separated scheme.
 Then \par
\textup{(a)} the class of contraadjusted quasi-coherent sheaves on
$X$ is closed under cokernels of injective morphisms; \par
\textup{(b)} for any very flat quasi-coherent sheaf $\F$ and any
contraadjusted quasi-coherent sheaf $\C$ on $X$, one has\/
$\Ext_X^m(\F,\C)=0$ for all $m\ge1$.
\end{prop}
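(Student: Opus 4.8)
The two parts together say that the very flat cotorsion pair $(\text{very flat},\,\text{contraadjusted})$ in $X\Qcoh$, shown to be complete in Proposition~\ref{vfl-cotorsion-pair-on-scheme}, is moreover hereditary, and my plan is a three-step reduction in which only the first step uses the affine theory of very flat modules. \emph{Step 1: the class of very flat quasi-coherent sheaves on $X$ is closed under kernels of epimorphisms.} Very flatness of a quasi-coherent sheaf is checked on the affine open subschemes of any affine open covering $X=\bigcup_\alpha U_\alpha$ (Section~\ref{very-flat-sheaves-subsecn}), and for each affine $U_\alpha$ the functor $\F\mapsto\F(U_\alpha)$ from $X\Qcoh$ to $\cO_X(U_\alpha)\Modl$ is exact. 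So if $0\rarrow\F'\rarrow\F\rarrow\F''\rarrow0$ is exact in $X\Qcoh$ with $\F$ and $\F''$ very flat, restriction to each $U_\alpha$ yields a short exact sequence of $\cO_X(U_\alpha)$\+modules with very flat middle and right terms; by hereditarity of the very flat cotorsion pair over a commutative ring (Section~\ref{cta-vfl-modules-subsecn}), the left term is very flat, hence $\F'$ is very flat.

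\emph{Step 2: deduce part~(a).} Here one uses the standard principle that a complete cotorsion pair whose left-hand class is closed under kernels of epimorphisms has a right-hand class closed under cokernels of monomorphisms, which I would prove directly. Let $0\rarrow\C'\rarrow\C\rarrow\C''\rarrow0$ be exact with $\C'$ and $\C$ contraadjusted, let $\F$ be very flat, and let $0\rarrow\C''\rarrow E\rarrow\F\rarrow0$ be an extension; using Proposition~\ref{vfl-cotorsion-pair-on-scheme}(a) choose a special very flat precover $0\rarrow D\rarrow W\rarrow E\rarrow0$ with $W$ very flat and $D$ contraadjusted, put $K=\ker(W\rarrow\F)$, observe that $K$ is very flat by Step~1 and that $0\rarrow D\rarrow K\rarrow\C''\rarrow0$ is exact, split the pullback of $\C\rarrow\C''$ along $K\rarrow\C''$ (using $\Ext^1_X(K,\C')=0$) to obtain a lift $K\rarrow\C$ of $K\rarrow\C''$, and push the monomorphism $K\hookrightarrow W$ out along $K\rarrow\C$ to get an extension of $\F$ by $\C$, which splits because $\Ext^1_X(\F,\C)=0$; chasing the resulting diagram produces a splitting of $0\rarrow\C''\rarrow E\rarrow\F\rarrow0$. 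Thus $\Ext^1_X(\F,\C'')=0$ for every very flat $\F$, i.e.\ $\C''$ is contraadjusted.

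\emph{Step 3: deduce part~(b) from part~(a).} The category $X\Qcoh$ has enough injective objects, and an injective quasi-coherent sheaf is contraadjusted (being right-orthogonal to every object, in particular to every very flat sheaf). Given a contraadjusted sheaf $\C$, pick an injective coresolution $0\rarrow\C\rarrow\I^0\rarrow\I^1\rarrow\dotsb$; by part~(a) every cosyzygy $Z^i=\operatorname{coker}(\I^{i-1}\rarrow\I^i)$ is contraadjusted. For $\F$ very flat, dimension shifting along $\I^\bu$ gives $\Ext^m_X(\F,\C)\simeq\Ext^1_X(\F,Z^{m-1})$ for all $m\ge1$, and the right-hand side vanishes by the defining property of a contraadjusted sheaf.

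The main obstacle is Step~2. What keeps it from being purely formal is that $X\Qcoh$ has no projective objects, so one cannot prove $\Ext^2_X(\F,\C)=0$ by naive dimension shifting in the first variable; the completeness established in Proposition~\ref{vfl-cotorsion-pair-on-scheme} is used precisely to manufacture, out of the given extension, the very flat object $W$ and the auxiliary very flat syzygy $K$ on which the diagram chase runs. Once Steps~1 and~3 are in place this is the only genuine content, and being a standard argument in the theory of complete cotorsion pairs it could alternatively be dispatched by citation (for instance to the treatment of the very flat cotorsion pair on schemes in~\cite{Pcosh}).
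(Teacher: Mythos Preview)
Your proof is correct and follows essentially the same route as the paper's. The paper is terser: it cites~\cite[Corollary~4.1.2]{Pcosh} and then remarks that Proposition~\ref{vfl-cotorsion-pair-on-scheme-hereditary} ``follows directly from the assertion of Lemma~\ref{very-flat-qcoh} together with the fact that the class of very flat quasi-coherent sheaves is closed under kernels of epimorphisms in $X\Qcoh$''---exactly your Step~1 plus enough very flats---leaving the standard cotorsion-pair implication (your Steps~2 and~3) to the reader; you have simply written those steps out. One small remark: in Step~2 you only use that $W$ is very flat, never that $D$ is contraadjusted, so Lemma~\ref{very-flat-qcoh} alone (rather than the full Proposition~\ref{vfl-cotorsion-pair-on-scheme}(a)) already suffices, matching the paper's phrasing precisely.
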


 The third proposition provides an additional description of the class
of contraadjusted quasi-coherent sheaves on~$X$.

\begin{prop} \label{contraadjusted-sheaves-described}
 Let $X$ be a quasi-compact semi-separated scheme with a finite affine
open covering $X=\bigcup_\alpha U_\alpha$.
 Denote by $j_\alpha\:U_\alpha\rarrow X$ the open embedding morphisms.
 Then a quasi-coherent sheaf $\C$ on $X$ is contraadjusted if and only
if $\C$ is a direct summand of a finitely iterated extension in
$X\Qcoh$ of the direct images $j_\alpha{}_*\C_\alpha$ of contraadjusted
quasi-coherent sheaves $\C_\alpha$ on~$U_\alpha$.
\end{prop}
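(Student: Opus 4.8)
I would prove the two implications separately. The ``if'' direction is immediate: since $X$ is semi-separated and each $U_\alpha$ is affine, the open embedding $j_\alpha\:U_\alpha\rarrow X$ is an affine (hence quasi-compact quasi-separated) morphism, so $j_\alpha{}_*$ takes contraadjusted quasi-coherent sheaves on $U_\alpha$ to contraadjusted quasi-coherent sheaves on $X$; and the class of contraadjusted quasi-coherent sheaves on $X$ is closed under extensions and direct summands. Denote by $\mathsf E\subset X\Qcoh$ the class of all direct summands of finitely iterated extensions of the sheaves $j_\alpha{}_*\C_\alpha$ with $\C_\alpha$ contraadjusted on $U_\alpha$; then $\mathsf E$ is contained in the class of contraadjusted sheaves, and the point is the reverse inclusion.

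For the ``only if'' direction I would first use the fact that the restriction of a contraadjusted quasi-coherent sheaf to any open subscheme is again contraadjusted (contraadjustedness is a local property; see~\cite[Section~2.5]{Pcosh}---the mechanism being that for an affine open $W\subset X$ and $s\in\cO(W)$ the direct image of $\widetilde{\cO(W)[s^{-1}]}$ along the affine open embedding $W\rarrow X$ is a very flat quasi-coherent sheaf on $X$, open embeddings being very flat affine morphisms). Then I argue by induction on the number $N$ of affines in the covering, the case $N=1$ being trivial. For the inductive step put $V=U_2\cup\dotsb\cup U_N$---a semi-separated quasi-compact scheme covered by $N-1$ affines---and let $h\:V\rarrow X$ and $m\:U_1\cap V\rarrow X$ be the open embeddings. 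Applying Lemma~\ref{cech-coresolution-lemma} to the two-element covering $\{U_1,V\}$ of a contraadjusted sheaf $\C$ gives a short exact sequence
$$
 0\rarrow\C\rarrow j_1{}_*(\C|_{U_1})\oplus h_*(\C|_V)\rarrow m_*(\C|_{U_1\cap V})\rarrow0 .
$$
Here $\C|_{U_1}$, $\C|_V$ and $\C|_{U_1\cap V}$ are contraadjusted on their respective schemes, and $V$ and $U_1\cap V$ are covered by $N-1$ affine opens, so by the induction hypothesis $\C|_V$ and $\C|_{U_1\cap V}$ are direct summands of finitely iterated extensions of direct images (along compositions of affine open embeddings) of affine-local contraadjusted sheaves. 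Every such direct image is acyclic for $h_*$, respectively $m_*$ (by the composition formula for derived direct images together with affineness), so $h_*$ and $m_*$ are exact along the corresponding filtrations; one concludes that $h_*(\C|_V)$ and $m_*(\C|_{U_1\cap V})$ lie in $\mathsf E$, and in fact $m_*(\C|_{U_1\cap V})$ is a direct summand of a finitely iterated extension of direct images $j_1{}_*\C_1$ of contraadjusted sheaves $\C_1$ on $U_1$, since each open embedding $U_1\cap U_\alpha\rarrow X$ factors through the affine open $U_1$. Thus the displayed sequence has the form $0\rarrow\C\rarrow\B\rarrow\Q\rarrow0$ with $\B\in\mathsf E$ and $\Q$ a sheaf of the last-mentioned special form.

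It remains to conclude that $\C\in\mathsf E$. A routine manipulation with direct summands lets one enlarge this sequence to $0\rarrow\C\rarrow\B'\rarrow\Q'\rarrow0$ in which $\Q'$ is now an \emph{honest} finitely iterated extension of sheaves $j_1{}_*\C_1$ and $\B'\in\mathsf E$ (one adds to both middle and right terms the complementary summand of $\Q$, which itself lies in $\mathsf E$). For each building block $j_1{}_*\C_1$ of $\Q'$, pushing a special very flat precover of the $\cO(U_1)$-module $\C_1$ (these exist by~\cite[Theorem~1.1.1]{Pcosh}) forward along the affine \emph{very flat} morphism $j_1$ produces a short exact sequence $0\rarrow j_1{}_*\C_1'\rarrow j_1{}_*\F_1\rarrow j_1{}_*\C_1\rarrow0$ with $j_1{}_*\F_1$ very flat on $X$ and $j_1{}_*\C_1'$ contraadjusted (hence in $\mathsf E$); assembling these along the filtration of $\Q'$ by a horseshoe-type argument (using that $\Ext^1_X$ of a very flat sheaf into a contraadjusted sheaf vanishes by definition of contraadjustedness, so that the precovers of successive layers lift) yields a special very flat precover $0\rarrow\C_{\Q'}\rarrow\F_{\Q'}\rarrow\Q'\rarrow0$ with $\F_{\Q'}$ very flat on $X$ and $\C_{\Q'}\in\mathsf E$. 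Pulling back $0\rarrow\C\rarrow\B'\rarrow\Q'\rarrow0$ along $\F_{\Q'}\rarrow\Q'$ gives an extension $0\rarrow\C\rarrow E\rarrow\F_{\Q'}\rarrow0$, which splits because $\C$ is contraadjusted and $\F_{\Q'}$ is very flat, so $\C$ is a direct summand of $E$; on the other hand $E$ also fits into $0\rarrow\C_{\Q'}\rarrow E\rarrow\B'\rarrow0$ with $\C_{\Q'},\B'\in\mathsf E$, whence $E\in\mathsf E$ and therefore $\C\in\mathsf E$. This completes the induction.

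The crux is this last step. A bare \v Cech coresolution of $\C$ by sheaves of $\mathsf E$ does \emph{not} by itself put $\C$ into $\mathsf E$ (for the same reason a finite injective coresolution of an object need not make it injective); the point is to ``fold it up'' with the help of the very flat cotorsion pair, which forces one to construct a special very flat precover of the cokernel term whose kernel again belongs to $\mathsf E$. That construction, in turn, hinges on the fact that direct images along very flat affine morphisms preserve very flatness, and it makes use of the preliminary observation that restriction of contraadjusted sheaves to open subschemes is contraadjusted, together with some routine bookkeeping of higher direct images and of direct summands.
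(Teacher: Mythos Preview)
Your argument is correct in its essentials but takes a rather different route from the paper's. The paper does not induct on $N$ via a two-piece \v Cech coresolution; instead it simply revisits the inductive construction in the proof of Lemma~\ref{very-flat-qcoh}, choosing at each step the kernel $\K$ of the surjection $\G\twoheadrightarrow j^*\cH$ on the affine open $U$ to be a \emph{contraadjusted} $\cO(U)$-module (possible by the affine very flat cotorsion pair). One then reads off directly that the kernel of the resulting very flat surjection $\F\twoheadrightarrow\M$ on $X$ is a finitely iterated extension of sheaves $j_\alpha{}_*\K_\alpha$ with $\K_\alpha$ contraadjusted on~$U_\alpha$. From there a single Salce-type step (embed $\C$ into an object of $\mathsf E$ via the first \v Cech map, apply this special precover to the cokernel, pull back, split) finishes the ``only if'' direction. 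This bypasses all your bookkeeping with higher direct images along non-affine open immersions and the horseshoe construction, though your final pullback-and-split manoeuvre is essentially the same idea.

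One point in your argument deserves more care. You invoke ``restriction of a contraadjusted sheaf to any open subscheme is contraadjusted'' for the \emph{non-affine} opens $V=U_2\cup\dotsb\cup U_N$ and $U_1\cap V$. The mechanism you sketch (push forward $\widetilde{\cO(W)[s^{-1}]}$ along an affine open embedding, then use the unit $\C\to j_*j^*\C$ to pull back and split) only handles restriction to \emph{affine} opens $W\subset X$; for non-affine $V$ the embedding $h\:V\to X$ is not affine and the argument does not go through as stated. You should either pin down a precise statement in~\cite{Pcosh} establishing locality of contraadjustedness and check that its proof is logically independent of the proposition being proved, or---more robustly---reformulate your induction hypothesis as ``if $\mathcal D(W_\beta)$ is a contraadjusted $\cO(W_\beta)$-module for each member $W_\beta$ of the given affine cover, then $\mathcal D\in\mathsf E$''. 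The affine-restriction argument you already have then supplies exactly the needed hypothesis for $\C|_V$ and $\C|_{U_1\cap V}$, and the rest of your proof goes through unchanged. The paper's approach sidesteps this issue entirely, since it never needs to restrict $\C$ to anything.
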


\begin{proof}[Proof of
Propositions~\ref{vfl-cotorsion-pair-on-scheme},
\ref{vfl-cotorsion-pair-on-scheme-hereditary},
and~\ref{contraadjusted-sheaves-described}]
 All these assertions can be found in~\cite[Section~4.1]{Pcosh}.
 Proposition~\ref{vfl-cotorsion-pair-on-scheme}
is~\cite[Corollary~4.1.4(a\+-b)]{Pcosh}.
 Proposition~\ref{vfl-cotorsion-pair-on-scheme-hereditary}
is~\cite[Corollary~4.1.2(b\+-c)]{Pcosh}.
 Proposition~\ref{contraadjusted-sheaves-described}
is~\cite[Corollary~4.1.4(c)]{Pcosh}.

 Let us provide a little discussion with more details.
 To prove Proposition~\ref{vfl-cotorsion-pair-on-scheme}(a), one
just needs to follow the proof of Lemma~\ref{very-flat-qcoh} above
carefully and look into what one can say about the kernel of
the surjective morphism of quasi-coherent sheaves that the construction
from the proof of the lemma provides.
 In the course of the proof of the lemma, when choosing a presentation
of the quasi-coherent sheaf $j^*\cH$ on $U$ as a quotient of a very
flat quasi-coherent sheaf $\G$, one needs to do it in such a way that
the kernel $\K$ is a contraadjusted quasi-coherent sheaf on~$U$ (which
is possible by~\cite[Theorem~1.1.1(b)]{Pcosh}).
 One will see that not only the kernel of the resulting surjective
morphism from a very flat quasi-coherent sheaf on $X$ is contraadjusted,
but also it has the form of a finitely iterated extension described in
Proposition~\ref{contraadjusted-sheaves-described}.
 The rest of the proof of all the three propositions is easier and
more standard.
 In particular,
Proposition~\ref{vfl-cotorsion-pair-on-scheme-hereditary} follows
directly from the assertion of Lemma~\ref{very-flat-qcoh} together
with the fact that the class of very flat quasi-coherent sheaves
is closed under kernels of epimorphisms in $X\Qcoh$.

 The results of Propositions~\ref{vfl-cotorsion-pair-on-scheme}\+-%
\ref{contraadjusted-sheaves-described} form a special case of
a general construction of gluing of hereditary complete cotorsion
pairs from an affine open covering of a quasi-compact semi-separated
scheme $X$, as described in~\cite[Section~B.4]{Pcosh}.
 For a version of this construction for principal affine open coverings
of affine schemes, see~\cite[Proposition~4.3]{Pal}.
\end{proof}

 Proposition~\ref{vfl-cotorsion-pair-on-scheme-hereditary}(a) implies
that the class of contraadjusted quasi-coherent sheaves is coresolving
in $X\Qcoh$ when $X$ is a quasi-compact semi-separated scheme.
 The coresolution dimension of a quasi-coherent sheaf $\M$ on $X$
with respect to the coresolving class of contraadjusted quasi-coherent
sheaves $\sC$ in $X\Qcoh$ is called the \emph{contraadjusted dimension}
of~$\M$.

\begin{lem} \label{contraadjusted-dimension-lemma}
 Let $X$ be a quasi-compact semi-separated scheme with an affine open
covering $X=\bigcup_{\alpha=1}^N U_\alpha$.
 Then the contraadjusted dimension of any quasi-coherent sheaf on $X$
does not exceed~$N$.
\end{lem}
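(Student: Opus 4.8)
The plan is to identify the coresolving class of contraadjusted quasi-coherent sheaves on $X$ with the $\Ext^{\ge1}$\+orthogonal class to the class of very flat quasi-coherent sheaves, and then combine Lemma~\ref{coresolution-dimension-for-orthogonal-class} with the projective-dimension bound of Lemma~\ref{very-flat-sheaf-projective-dimension}.

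First I would let $\sF\subset X\Qcoh$ denote the class of all very flat quasi-coherent sheaves on~$X$. By definition, a quasi-coherent sheaf $\C$ on $X$ is contraadjusted if and only if $\Ext^1_X(\F,\C)=0$ for every $\F\in\sF$; and by Proposition~\ref{vfl-cotorsion-pair-on-scheme-hereditary}(b), for a very flat quasi-coherent sheaf $\F$ and a contraadjusted quasi-coherent sheaf $\C$ one in fact has $\Ext^m_X(\F,\C)=0$ for all $m\ge1$. Hence the class $\sC$ of contraadjusted quasi-coherent sheaves on $X$ coincides with the class $\sF^{\perp_{\ge1}}$ in the notation of Section~\ref{generators-of-fpd-subsecn}.

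Next, Lemma~\ref{very-flat-sheaf-projective-dimension} tells us that every very flat quasi-coherent sheaf on $X$ has projective dimension at most~$N$ in $X\Qcoh$; that is, $\sF$ is a class of objects of projective dimension $\le N$. It now remains to apply Lemma~\ref{coresolution-dimension-for-orthogonal-class} with $\sA=X\Qcoh$, the class $\sF$ as above, and $d=N$: it yields that every quasi-coherent sheaf on $X$ has coresolution dimension at most $N$ with respect to the coresolving class $\sC=\sF^{\perp_{\ge1}}$, which is exactly the class of contraadjusted quasi-coherent sheaves. In other words, the contraadjusted dimension of any quasi-coherent sheaf on $X$ does not exceed~$N$.

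The argument is simply a concatenation of results already established, so there is essentially no obstacle. The only point that requires a moment's attention is the identification $\sC=\sF^{\perp_{\ge1}}$, which is where the hereditariness of the very flat cotorsion pair (Proposition~\ref{vfl-cotorsion-pair-on-scheme-hereditary}(b)) enters; even without it one has the inclusion $\sF^{\perp_{\ge1}}\subseteq\sC$, and since a coresolution of length $\le N$ by objects of $\sF^{\perp_{\ge1}}$ is in particular a coresolution of length $\le N$ by contraadjusted sheaves, the conclusion would follow all the same.
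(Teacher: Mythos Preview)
Your proof is correct and follows essentially the same route as the paper's: identify the contraadjusted class with $\sF^{\perp_{\ge1}}$ for $\sF$ the very flat sheaves (using Proposition~\ref{vfl-cotorsion-pair-on-scheme-hereditary}(b)), invoke the projective-dimension bound of Lemma~\ref{very-flat-sheaf-projective-dimension}, and apply Lemma~\ref{coresolution-dimension-for-orthogonal-class} with $d=N$. Your closing observation that the inclusion $\sF^{\perp_{\ge1}}\subseteq\sC$ alone already suffices is a valid sharpening.
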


\begin{proof}
 This is~\cite[Lemma~4.7.1(b)]{Pcosh}.
 The assertion is easily provable by comparing
Lemma~\ref{very-flat-sheaf-projective-dimension} with
Lemma~\ref{coresolution-dimension-for-orthogonal-class}
(for $d=N$ and the class $\sF$ of all very flat quasi-coherent sheaves
in $\sA=X\Qcoh$).
 One also has to keep in mind
Proposition~\ref{vfl-cotorsion-pair-on-scheme-hereditary}(b).
\end{proof}

\begin{lem} \label{contraadjusted-product-adjusted-lemma}
 Let $X$ be a quasi-compact semi-separated scheme and\/
$\Lambda$ be a set.
 Put\/ $\sA=X\Qcoh$, and consider the direct product functor
$G=\prod_{\lambda\in\Lambda}\:\sE=\sA^\Lambda\rarrow\sA$.
 Let\/ $\sC\subset\sA$ be the class of all contraadjusted
quasi-coherent sheaves on~$X$.
 Then the class\/ $\sK=\sC^\Lambda\subset\sE$ of all\/
$\Lambda$\+indexed collections of contraadjusted quasi-coherent
sheaves on $X$ is adjusted to the functor~$G$.

 In other words, let\/ $0\rarrow\C_\lambda\rarrow\B_\lambda
\rarrow\A_\lambda\rarrow0$ be a $\Lambda$\+indexed collection
of short exact sequences of quasi-coherent sheaves on $X$ with
contraadjusted quasi-coherent sheaves $\C_\lambda$,
$\lambda\in\Lambda$.
 Then the short sequence of direct products
$$
 0\lrarrow\prod\nolimits_{\lambda\in\Lambda}\C_\lambda\lrarrow
 \prod\nolimits_{\lambda\in\Lambda}\B_\lambda\lrarrow
 \prod\nolimits_{\lambda\in\Lambda}\A_\lambda\lrarrow0
$$
is exact in $X\Qcoh$.
\end{lem}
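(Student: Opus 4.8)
The plan is to split the assertion into two parts: that $\sK=\sC^\Lambda$ is a coresolving class in $\sE=\sA^\Lambda$, and that for any $\Lambda$\+indexed family of short exact sequences $0\rarrow\C_\lambda\rarrow\B_\lambda\rarrow\A_\lambda\rarrow0$ in $X\Qcoh$ with $\C_\lambda$ contraadjusted, the sequence of products $0\rarrow\prod_\lambda\C_\lambda\rarrow\prod_\lambda\B_\lambda\rarrow\prod_\lambda\A_\lambda\rarrow0$ is exact. (Note that this second statement is slightly stronger than condition~(iv) of Section~\ref{adjusted-classes-subsecn}, which only concerns families with all three terms in $\sK$; so the two parts together will yield ``$\sK$ is adjusted to~$G$'' and a little more, which is exactly what the lemma claims.) The first part is essentially free: $\sC$ is coresolving in $\sA=X\Qcoh$ by Proposition~\ref{vfl-cotorsion-pair-on-scheme-hereditary}(a), and the conditions (i)--(iii) of Section~\ref{adjusted-classes-subsecn} are checked componentwise for $\sK=\sC^\Lambda$, using that the injective objects of $\sA^\Lambda$ are precisely the $\Lambda$\+indexed collections of injective objects of~$\sA$.

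For the second part, I would first observe that left exactness is automatic: $X\Qcoh$ is a Grothendieck category, so infinite products exist, and the product functor $G$, being a right adjoint, is left exact; thus only the surjectivity of $\prod_\lambda\B_\lambda\rarrow\prod_\lambda\A_\lambda$ is at issue. (If $\Lambda$ is finite there is nothing to prove, finite products being finite coproducts and hence exact in an abelian category; the essential case is $\Lambda$ infinite.)

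To prove that surjectivity I would invoke the very flat generator. By Proposition~\ref{very-flat-generator-prop} there is a very flat quasi-coherent sheaf $\F$ on $X$ that is a generator of $X\Qcoh$. Since $\F$ is very flat and each $\C_\lambda$ is contraadjusted, the definition of a contraadjusted quasi-coherent sheaf in Section~\ref{contraadjusted-subsecn} gives $\Ext^1_X(\F,\C_\lambda)=0$, so $\Hom_X(\F,\B_\lambda)\rarrow\Hom_X(\F,\A_\lambda)$ is surjective for every~$\lambda$. Taking products and using $\Hom_X(\F,\prod_\lambda\M_\lambda)=\prod_\lambda\Hom_X(\F,\M_\lambda)$, the map $\Hom_X(\F,\prod_\lambda\B_\lambda)\rarrow\Hom_X(\F,\prod_\lambda\A_\lambda)$ is surjective; that is, every morphism $\F\rarrow\prod_\lambda\A_\lambda$ factors through $\prod_\lambda\B_\lambda\rarrow\prod_\lambda\A_\lambda$, hence through the image of this morphism. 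Since $\F$ is a generator, a subsheaf of $\prod_\lambda\A_\lambda$ through which every morphism from $\F$ factors must be all of $\prod_\lambda\A_\lambda$; so the image is the whole sheaf. This is precisely the generator argument in the proof of Lemma~\ref{orthogonal-to-generator-adjusted-to-products}, so I would most likely just cite that proof rather than repeat it.

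I do not expect a genuine obstacle here: the single substantive ingredient is the existence of a very flat generator of $X\Qcoh$, and that has already been established in Proposition~\ref{very-flat-generator-prop} (via Lemma~\ref{very-flat-qcoh}); everything else is formal. If one wanted an argument not routed through a generator, one could instead restrict the given short exact sequences to the affine members $U_\alpha$ of a semi-separated affine covering, where products of modules over a ring are exact, and then push forward along the affine open immersions $j_\alpha\:U_\alpha\rarrow X$ (which are exact since these immersions are affine, and preserve products since they are right adjoints), combined with the description of contraadjusted sheaves in Proposition~\ref{contraadjusted-sheaves-described}; but this is more laborious than the generator argument.
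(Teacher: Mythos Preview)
Your proposal is correct and follows essentially the same route as the paper: the paper's proof is the single sentence ``The argument from the proof of Lemma~\ref{orthogonal-to-generator-adjusted-to-products} is applicable in view of Proposition~\ref{very-flat-generator-prop},'' which is exactly the reduction you arrive at after spelling out the details. Your additional remark that the coresolving property of $\sC$ comes from Proposition~\ref{vfl-cotorsion-pair-on-scheme-hereditary}(a) is a useful explicit supplement, since Lemma~\ref{orthogonal-to-generator-adjusted-to-products} literally treats the class $F^{\perp_{\ge1}}$ for a single generator $F$, whereas the contraadjusted class is the $\Ext^1$\+orthogonal to all very flat sheaves; but as you observe, only the vanishing $\Ext^1_X(\F,\C_\lambda)=0$ for the chosen very flat generator $\F$ is needed in the surjectivity step.
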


\begin{proof}
 The argument from the proof of
Lemma~\ref{orthogonal-to-generator-adjusted-to-products} is applicable
in view of Proposition~\ref{very-flat-generator-prop}.
\end{proof}

 Comparing Lemma~\ref{adjusted-class-lemma}
with Lemmas~\ref{contraadjusted-dimension-lemma}
and~\ref{contraadjusted-product-adjusted-lemma}, we obtain another
(slightly differently structured) proof of
Corollary~\ref{roos-axiom-very-flat-generator-semi-separated}.

\Section{Co-Contra Correspondence Argument}  \label{co-contra-secn}

 In this section we present an abstract category-theoretic version of
the argument of~\cite[Section~6.10]{Pcosh} deducing the Roos axiom
for an exact category $\sA$ from the existence of a ``na\"\i ve
co-contra correspondence'' of $\sA$ with an exact category $\sB$ with
exact direct products.

\subsection{Cohomologically bounded complexes in exact categories}
 We suggest the survey paper~\cite{Bueh} as the background reference
source on exact categories in the sense of Quillen.
 To simplify the exposition, in this paper we assume all our exact
categories $\sE$ to be idempotent-complete.
 This means that any idempotent endomorphism of an object $E\in\sE$
is the projection onto a direct summand $F$ in a direct sum
decomposition $E=F\oplus G$ in~$\sE$ \,\cite[Section~6]{Bueh}.

 Let $\sE$ be an idempotent-complete exact category.
 A complex $E^\bu$ in $\sE$ is said to be
\emph{acyclic}~\cite[Definition~10.1]{Bueh} if it can be obtained
by splicing (admissible) short exact sequences $0\rarrow Z^i\rarrow
E^i\rarrow Z^{i+1}\rarrow0$ (so that the morphism $E^i\rarrow E^{i+1}$
is equal to the composition $E^i\rarrow Z^{i+1}\rarrow E^{i+1}$).
 Notice that, even though one can speak of acyclic complexes in
an exact category, the notion of cohomology objects of a complex in
an exact category does not make sense in general.
 Nevertheless, the following definition is meaningful.

 Given a complex $E^\bu$ in $\sE$ and an integer $d\in\boZ$, let us say
that \emph{$E^\bu$ is acyclic in cohomological degrees~$>d$} and write
$H^{>d}(E^\bu)=0$ if the following condition holds.
 There should exist a sequence of (admissible) short exact sequences
$0\rarrow Z^i\rarrow E^i\rarrow Z^{i+1}\rarrow0$ in $\sE$, defined for
all $i\ge d$, such that the morphism $E^i\rarrow E^{i+1}$ is equal to
the composition $E^i\rarrow Z^{i+1}\rarrow E^{i+1}$ for every $i\ge d$.
 
 Dually, we will say that \emph{$E^\bu$ is acyclic in cohomological
degrees~$<-d$} and write $H^{<-d}(E^\bu)=0$ if there exists a sequence
of short exact sequences $0\rarrow Z^i\rarrow E^i\rarrow Z^{i+1}
\rarrow0$, defined for all $i\le-d$, such that the morphism $E^{i-1}
\rarrow E^i$ is equal to the composition $E^{i-1}\rarrow Z^i\rarrow
E^i$ for every $i\le-d$.

 The definition of the unbounded derived category $\sD(\sE)$ of
an exact category $\sE$ can be found in~\cite[Section~10.4]{Bueh}.
 We will say colloquially that two complexes in $\sE$ are
``quasi-isomorphic'' if they are isomorphic as objects of $\sD(\sE)$.
 More specifically, a morphism of complexes in $\sE$ is said to be
a \emph{quasi-isomorphism} if its cone is acyclic.

\begin{lem} \label{bounded-cohomology-invariant}
 Any complex in\/ $\sE$ that is quasi-isomorphic to a complex acyclic
in cohomological degrees~$>d$ is also acyclic in cohomological
degrees~$>d$.
 Dually, any complex quasi-isomorphic to a complex acyclic in 
cohomological degrees~$<-d$ is also acyclic in cohomological
degrees~$<-d$.
\end{lem}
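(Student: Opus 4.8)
The plan is to reduce the statement, in a few stages, to an explicit manipulation of admissible short exact sequences. It suffices to prove the first assertion: the second one follows by applying it in the opposite exact category $\sE^{\rop}$ (which is again idempotent-complete), under which a complex $E^\bu$ becomes the complex whose $i$th term is $E^{-i}$, the property ``acyclic in cohomological degrees~$<-d$'' turns into ``acyclic in cohomological degrees~$>d$'', quasi-isomorphisms go to quasi-isomorphisms, and $\sD(\sE^{\rop})\simeq\sD(\sE)^{\rop}$. As a first step, I would recall that the full subcategory $\sK_{\mathrm{ac}}(\sE)$ of acyclic complexes in the homotopy category $\sK(\sE)$ is a thick triangulated subcategory --- this is where idempotent completeness of $\sE$ is used; see~\cite[Sections~6--7 and~10]{Bueh} --- so that $\sD(\sE)=\sK(\sE)/\sK_{\mathrm{ac}}(\sE)$ is a Verdier quotient and its morphisms admit a calculus of fractions. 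Hence any isomorphism $A^\bu\simeq B^\bu$ in $\sD(\sE)$ is represented by a roof $A^\bu\xleftarrow{s}W^\bu\xrightarrow{t}B^\bu$ in which $s$ is a quasi-isomorphism; since the roof is invertible, the chain map $t$ induces an isomorphism in $\sD(\sE)$, so $\mathrm{Cone}(t)$ is isomorphic to zero in the Verdier quotient and therefore belongs to the thick subcategory $\sK_{\mathrm{ac}}(\sE)$, i.~e., $t$ is a quasi-isomorphism as well. So it is enough to show that for every quasi-isomorphism $f\:A^\bu\rarrow B^\bu$, the complex $A^\bu$ is acyclic in cohomological degrees~$>d$ if and only if $B^\bu$ is.

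The second step is a truncation lemma: \emph{a complex $E^\bu$ is acyclic in cohomological degrees~$>d$ if and only if it fits into a short exact sequence of complexes $0\rarrow P^\bu\rarrow E^\bu\rarrow N^\bu\rarrow0$ with $P^\bu$ concentrated in cohomological degrees~$\le d$ and $N^\bu$ acyclic.} For the ``only if'' direction, let $P^\bu$ be the subcomplex which equals $E^\bu$ in degrees~$<d$, equals $Z^d$ in degree~$d$, and vanishes in degrees~$>d$ (with the induced differential), where $0\rarrow Z^i\rarrow E^i\rarrow Z^{i+1}\rarrow0$, $i\ge d$, are the admissible short exact sequences witnessing acyclicity; the quotient complex $N^\bu$ is $Z^{d+1}\rarrow E^{d+1}\rarrow E^{d+2}\rarrow\dotsb$, which is visibly acyclic. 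For the ``if'' direction, the map $E^\bu\rarrow N^\bu$ is an isomorphism in degrees~$\ge d+1$, so the acyclicity short exact sequences of $N^\bu$ transport to $E^\bu$ in those degrees; the remaining admissible short exact sequence $0\rarrow Z^d\rarrow E^d\rarrow Z^{d+1}\rarrow0$ is built by letting $Z^{d+1}$ be the target of the composite admissible epimorphism $E^d\rarrow N^d\rarrow W^{d+1}$ (the second arrow coming from acyclicity of $N^\bu$) and $Z^d$ its kernel. This uses that admissible epimorphisms compose, and --- the one lemma that will be invoked repeatedly --- that if a composite $g\circ f$ is an admissible epimorphism then so is $g$ \,\cite[Proposition~7.6]{Bueh}. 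In particular, when $E^\bu$ is acyclic in degrees~$>d$, the inclusion $P^\bu\rarrow E^\bu$ is a quasi-isomorphism whose source is concentrated in degrees~$\le d$.

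The core of the argument is two mapping-cone computations. \textbf{(A)} If $g\:P^\bu\rarrow Y^\bu$ is a quasi-isomorphism with $P^\bu$ concentrated in degrees~$\le d$, then $Y^\bu$ is acyclic in degrees~$>d$: here $\mathrm{Cone}(g)^i=P^{i+1}\oplus Y^i$ equals $Y^i$ for every $i\ge d$ (since $P^{i+1}=0$), with differential $\partial_Y^i$, so the admissible short exact sequences expressing acyclicity of $\mathrm{Cone}(g)$ restrict, in degrees~$\ge d$, to admissible short exact sequences $0\rarrow Z^i\rarrow Y^i\rarrow Z^{i+1}\rarrow0$ factoring~$\partial_Y^i$. \textbf{(B)} If $g\:Z^\bu\rarrow P^\bu$ is a quasi-isomorphism with $P^\bu$ concentrated in degrees~$\le d$, then $Z^\bu$ is acyclic in degrees~$>d$: now $\mathrm{Cone}(g)^i=Z^{i+1}\oplus P^i$ equals $Z^{i+1}$ with differential $-\partial_Z^{i+1}$ for $i\ge d+1$, which at once produces the admissible short exact sequences for $Z^\bu$ in degrees~$\ge d+2$; the two missing ones, in degrees $d+1$ and~$d$, are extracted from the acyclicity short exact sequences of $\mathrm{Cone}(g)$ at indices $d$ and $d-1$ by a block-matrix computation, in which the vanishing of the terms of $P^\bu$ above degree~$d$ lets one split off the summands coming from $P^\bu$ and then, by the ``composite admissible epimorphism'' principle above, recognize the blocks landing in the summands coming from $Z^\bu$ as admissible epimorphisms, whose kernels furnish the missing sequences. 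The bookkeeping in cohomological degree~$d$ in part~(B) is the one delicate point, and I expect it to be the main obstacle.

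Finally the pieces are assembled. Let $f\:A^\bu\rarrow B^\bu$ be a quasi-isomorphism. If $A^\bu$ is acyclic in degrees~$>d$, the truncation lemma provides a quasi-isomorphism $P^\bu\rarrow A^\bu$ with $P^\bu$ concentrated in degrees~$\le d$; composing with $f$ and applying~(A) shows that $B^\bu$ is acyclic in degrees~$>d$. Conversely, if $B^\bu$ is acyclic in degrees~$>d$, the truncation lemma provides a quasi-isomorphism $Q^\bu\rarrow B^\bu$ with $Q^\bu$ concentrated in degrees~$\le d$, so $A^\bu\simeq Q^\bu$ in $\sD(\sE)$; choosing a roof $A^\bu\xleftarrow{s}W^\bu\xrightarrow{t}Q^\bu$ of quasi-isomorphisms as in the first step and applying~(B) to~$t$ shows $W^\bu$ is acyclic in degrees~$>d$; the truncation lemma then gives a quasi-isomorphism $R^\bu\rarrow W^\bu$ with $R^\bu$ concentrated in degrees~$\le d$, and composing with~$s$ and applying~(A) shows that $A^\bu$ is acyclic in degrees~$>d$. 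This proves the first assertion, and the second follows by the passage to $\sE^{\rop}$ recorded at the beginning.
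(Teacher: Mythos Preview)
Your proof is correct and follows essentially the same strategy as the paper's. Both arguments characterize ``acyclic in the relevant half-line of degrees'' as ``quasi-isomorphic to a complex concentrated on the complementary half-line'' via a truncation, prove the two mapping-cone computations for a quasi-isomorphism to or from such a bounded complex (your (A) and (B) are precisely the duals of the paper's (1) and (2)), and then assemble the general case with a roof of quasi-isomorphisms. The only cosmetic differences are that you treat the $>d$ case directly while the paper treats the $<-d$ case (each deducing the other by passing to~$\sE^{\rop}$), that your roof is oriented outward ($A\leftarrow W\rightarrow Q$) whereas the paper's is inward ($E\rightarrow G\leftarrow F$), and that you supply explicit details for the cone computations while the paper defers those to~\cite[proof of Lemma~A.5.1]{Pcosh}.
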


\begin{proof}
 The argument from~\cite[proof of Lemma~A.5.1]{Pcosh} proves the second
assertion (while the first one is dual).
 Essentially, the point is that a complex $E^\bu$ in $\sE$ is acyclic
in cohomological degrees~$<\nobreak-d$ if and only if it is
quasi-isomorphic to a complex $F^\bu$ such that $F^i=0$ for all $i<-d$.
 Here the ``only if'' claim is obvious: take $F^\bu$ to be the complex
$0\rarrow Z^{-d+1}\rarrow E^{-d+1}\rarrow E^{-d+2}\rarrow\dotsb$
(where the morphism $Z^{-d+1}\rarrow E^{-d+1}$ is obtained by
factorizing uniquely the morphism $E^{-d}\rarrow E^{-d+1}$ as
$E^{-d}\rarrow Z^{-d+1}\rarrow E^{-d+1}$, using the fact that
the composition $Z^{-d}\rarrow E^{-d}\rarrow E^{-d+1}$ vanishes).
 Then there is an obvious (termwise admissibly epimorphic)
quasi-isomorphism $E^\bu\rarrow F^\bu$.

 To prove the ``if'', it suffices to check two statements:
\begin{enumerate}
\item if $F^i=0$ for all $i<-d$ and $G^\bu\rarrow F^\bu$ is
a quasi-isomorphism, then $G^\bu$ is acyclic in cohomological
degrees~$<-d$;
\item if $F^i=0$ for all $i<-d$ and $F^\bu\rarrow G^\bu$ is
a quasi-isomorphism, then $G^\bu$ is acyclic in cohomological
degrees~$<-d$.
\end{enumerate}
 Then one argues as follows.
 Given an arbitrary complex $E^\bu$ isomorphic to $F^\bu$ in
$\sD(\sE)$ with $F^i=0$ for all $i<-d$, there exists a ``roof'' of
two quasi-isomorphisms $E^\bu\rarrow G^\bu\larrow F^\bu$.
 By~(2), it follows that there are a complex ${}'\!F^\bu$ in
$\sE$ with ${}'\!F^i=0$ for all $i<-d$ and a quasi-isomorphism
$G^\bu\rarrow{}'\!F^\bu$.
 Hence the composition $E^\bu\rarrow G^\bu\rarrow{}'\!F^\bu$
is also a quasi-isomorphism, and it remains to apply~(1).

 We refer to~\cite[proof of Lemma~A.5.1]{Pcosh} for the details
concerning the proofs of~(1) and~(2).
 Notice that in the setting of~\cite[Section~A.5]{Pcosh} the exact
categories are not assumed to be idempotent-complete, but only
\emph{weakly idempotent-complete} (as in~\cite[Section~7]{Bueh}).
 This necessitates requiring the complexes in~\cite[Lemma~A.5.1]{Pcosh}
to be bounded above, which is not needed in our context.
\end{proof}

\begin{lem} \label{direct-summand-cohomologically-bounded}
 Any complex in\/ $\sE$ that is isomorphic, as an object of\/
the derived category\/ $\sD(\sE)$, to a direct summand of a complex
acyclic in cohomological degrees~$>d$ is also acyclic in cohomological
degrees~$>d$.
 Dually, any complex in\/ $\sE$ that is isomorphic, as an object of\/
the derived category\/ $\sD(\sE)$, to a direct summand of a complex
acyclic in cohomological degrees~$<-d$ is also acyclic in cohomological
degrees~$<-d$.
\end{lem}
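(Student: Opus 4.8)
The plan is to reduce to the case where $E^\bu$ is a direct summand, in the category of complexes in $\sE$ rather than merely in $\sD(\sE)$, of a complex acyclic in cohomological degrees~$>d$, and then to split the relevant idempotent through the admissible short exact sequences witnessing that acyclicity. I treat the assertion about degrees~$>d$; the one about degrees~$<-d$ is dual.

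For the reduction, suppose $E^\bu$ is isomorphic in $\sD(\sE)$ to a direct summand of a complex $C^\bu$ with $H^{>d}(C^\bu)=0$, and let $G^\bu$ be a complex in $\sE$ representing the complementary summand, so that $E^\bu\oplus G^\bu\simeq C^\bu$ in $\sD(\sE)$ with the left-hand side the termwise direct sum of complexes. By Lemma~\ref{bounded-cohomology-invariant}, the complex $E^\bu\oplus G^\bu$ is acyclic in cohomological degrees~$>d$. Since $E^\bu$ is a genuine direct summand of $E^\bu\oplus G^\bu$ (via the honest inclusion and projection chain maps), it now suffices to prove the following: if $D^\bu$ is a complex in $\sE$ with $H^{>d}(D^\bu)=0$ and $D^\bu=E^\bu\oplus G^\bu$ is a direct sum decomposition of complexes, then $H^{>d}(E^\bu)=0$.

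To prove this, I would take the idempotent chain map $p\:D^\bu\rarrow D^\bu$ projecting onto the summand $E^\bu$, and fix admissible short exact sequences $0\rarrow Z^i\rarrow D^i\rarrow Z^{i+1}\rarrow0$ for $i\ge d$ realizing $H^{>d}(D^\bu)=0$, so that the differential $d^i\:D^i\rarrow D^{i+1}$ is the composite $D^i\twoheadrightarrow Z^{i+1}\hookrightarrow D^{i+1}$. The key point is that $p$ acts compatibly on this data. Indeed, $d^i$ annihilates $Z^i$ (as $Z^i=\ker(D^i\rarrow Z^{i+1})$), so, $p$ being a chain map, the composite $Z^i\hookrightarrow D^i\rarrow D^i\twoheadrightarrow Z^{i+1}$, with the middle arrow~$p^i$, becomes zero after postcomposition with the monomorphism $Z^{i+1}\hookrightarrow D^{i+1}$ and is therefore zero; hence $p^i$ restricts to an idempotent endomorphism $q^i$ of $Z^i$, and a parallel computation shows that $(q^i,p^i,q^{i+1})$ is an idempotent endomorphism of the short exact sequence $0\rarrow Z^i\rarrow D^i\rarrow Z^{i+1}\rarrow0$. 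Since $\sE$ is idempotent-complete, these idempotents split; and because the class of admissible short exact sequences is closed under passage to direct summands in an idempotent-complete exact category, one extracts from them admissible short exact sequences $0\rarrow Y^i\rarrow E^i\rarrow Y^{i+1}\rarrow0$ for $i\ge d$, with $Y^i=\operatorname{im}q^i\subseteq Z^i$ and $E^i=\operatorname{im}p^i$. These witness $H^{>d}(E^\bu)=0$, completing the proof.

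The step I expect to be the main obstacle is this last one: verifying carefully that splitting the compatible idempotents $(q^i,p^i,q^{i+1})$ genuinely decomposes each chosen admissible short exact sequence as a direct sum of two admissible short exact sequences --- one attached to $E^\bu$ and one to $G^\bu$ --- and that the subobjects $Y^{i+1}\subseteq E^{i+1}$ one reads off are precisely the data making the differential of $E^\bu$ factor as $E^i\twoheadrightarrow Y^{i+1}\hookrightarrow E^{i+1}$. This is a routine but slightly delicate diagram chase in an idempotent-complete exact category, resting on the fact that retracts of admissible monomorphisms --- equivalently, direct summands of admissible short exact sequences --- are again admissible.
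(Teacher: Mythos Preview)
Your proof is correct and follows essentially the same approach as the paper: first use Lemma~\ref{bounded-cohomology-invariant} to pass from the derived-category direct summand to an honest termwise direct sum $E^\bu\oplus G^\bu$ with $H^{>d}(E^\bu\oplus G^\bu)=0$, then invoke idempotent-completeness to split the admissible factorizations of the differentials. The paper phrases the second step more tersely, citing the fact that direct summands of admissible morphisms in an idempotent-complete exact category are admissible~\cite[Definition~8.1 and Exercise~8.18]{Bueh}, whereas you spell out explicitly the construction of the compatible idempotents $q^i$ on the~$Z^i$; but this is the same argument.
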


\begin{proof}
 Let us prove the second assertion.
 Let $E^\bu$ be a complex in $\sE$ such that $H^{<-d}(E^\bu)=0$.
 Suppose that $E^\bu$ is isomorphic to a direct sum $C^\bu\oplus D^\bu$
in $\sD(\sE)$ (where $C^\bu$ and $D^\bu$ are two complexes in~$\sE$).
 By Lemma~\ref{bounded-cohomology-invariant}, it follows that
$H^{<-d}(C^\bu\oplus D^\bu)=0$.
 Using the fact that any direct summand of an admissible morphism in
an idempotent-complete exact category is an admissible
morphism~\cite[Definition~8.1 and Exercise~8.18]{Bueh}, one easily
arrives to the conclusion that $H^{<-d}(C^\bu)=0$.
\end{proof}

\subsection{Exact categories with exact products}
 Given an exact category $\sE$, we denote by $\sK(\sE)$ the cochain
homotopy category of complexes in~$\sE$.

 Given a set $\Lambda$ and a family of complexes
$(E^\bu_\lambda)_{\lambda\in\Lambda}$ in $\sE$, one has to distinguish
between the direct product of $E^\bu_\lambda$ taken in the homotopy
category $\sK(\sE)$ and the direct product of the same complexes
computed in the derived category $\sD(\sE)$.
 Generally speaking, they do \emph{not} agree, if the direct product
functors are not exact in~$\sE$.

 If direct products exist in $\sE$, then the direct products in
the homotopy category $\sK(\sE)$ can be computed termwise (while
the direct products in $\sD(\sE)$ are certain derived functors).
 We denote the respective products by
$\prod_{\lambda\in\Lambda}^{\sK(\sE)}E^\bu_\lambda$
and $\prod_{\lambda\in\Lambda}^{\sD(\sE)}E^\bu_\lambda$.

 Let $\sB$ be an exact category where all infinite products exist.
 We will say that $\sB$ has \emph{exact direct products} if
the termwise direct product of any family of (admissible) short exact
sequences is a short exact sequence.
 Equivalently, this means that the termwise direct product of any
family of acyclic complexes is an acyclic complex.

\begin{lem} \label{exact-products}
 Let\/ $\sB$ be an exact category with exact direct products.
 Then the direct products in the homotopy category\/ $\sK(\sB)$ and
in the derived category\/ $\sD(\sB)$ agree.
 In other words, for any family of complexes
$(B^\bu_\lambda)_{\lambda\in\Lambda}$ in $\sB$, the direct product
of $B^\bu_\lambda$ in\/ $\sK(\sB)$ is also their direct product in\/
$\sD(\sB)$, that is\/
$\prod_{\lambda\in\Lambda}^{\sD(\sB)}B^\bu_\lambda=
\prod_{\lambda\in\Lambda}^{\sK(\sB)}B^\bu_\lambda$.
\end{lem}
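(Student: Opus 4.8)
The plan is to verify the universal property of a direct product in $\sD(\sB)$ directly for the termwise (homotopy) product. Write $P^\bu = \prod_{\lambda\in\Lambda}^{\sK(\sB)}B^\bu_\lambda$, formed degreewise, with the canonical projections $\pi_\lambda\colon P^\bu\to B^\bu_\lambda$ in $\sK(\sB)$, hence in $\sD(\sB)$. I need to show that for every complex $A^\bu$ in $\sB$ the map
$$
 \Hom_{\sD(\sB)}(A^\bu, P^\bu)\lrarrow
 \prod\nolimits_{\lambda\in\Lambda}\Hom_{\sD(\sB)}(A^\bu, B^\bu_\lambda),
 \qquad \phi\longmapsto(\pi_\lambda\circ\phi)_{\lambda\in\Lambda},
$$
is a bijection. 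The key input is that, since $\sB$ has exact direct products, the termwise product of a family of acyclic complexes is acyclic; equivalently, $\prod_\lambda$ is an exact functor on complexes that preserves quasi-isomorphisms and takes the class of acyclic complexes into itself.

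First I would treat surjectivity. A family of morphisms $A^\bu\to B^\bu_\lambda$ in $\sD(\sB)$ is represented by a family of roofs $A^\bu \xleftarrow{\;s_\lambda\;} C^\bu_\lambda \xrightarrow{\;f_\lambda\;} B^\bu_\lambda$ with each $s_\lambda$ a quasi-isomorphism. The subtlety is that the sources $C^\bu_\lambda$ vary with $\lambda$, so one cannot immediately take their product; instead I would replace each roof by an equivalent one with a \emph{fixed} source. Using the calculus of fractions for $\sD(\sB)$, each morphism in $\sD(\sB)$ out of $A^\bu$ is already represented by a \emph{left} roof $A^\bu\xrightarrow{g_\lambda} D^\bu_\lambda \xleftarrow{t_\lambda} B^\bu_\lambda$ with $t_\lambda$ a quasi-isomorphism — or, working the other way, one can fit all the $s_\lambda$ into a single quasi-isomorphism by forming $\prod_\lambda s_\lambda \colon \prod_\lambda C^\bu_\lambda \to \prod_\lambda A^\bu$ and composing with the diagonal $A^\bu\to\prod_\lambda A^\bu$. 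Concretely: form $C^\bu=\prod_\lambda C^\bu_\lambda$, note $\prod_\lambda s_\lambda$ is a quasi-isomorphism by exactness of products, and let $s\colon C^\bu\to A^\bu$ be the composite of $\prod_\lambda s_\lambda$ with... this does not land where I want. The clean route is the dual one: represent each class by a roof with source $A^\bu$ on the \emph{right}, i.e.\ $B^\bu_\lambda\xrightarrow{f_\lambda} E^\bu_\lambda \xleftarrow{u_\lambda} A^\bu$ is wrong-handed too. I would therefore argue as follows. Fix a K-injective-type replacement is unavailable in a general exact category, so instead I run the standard argument: given a family $(\psi_\lambda)$ of morphisms in $\sD(\sB)$, each $\psi_\lambda$ is a fraction $f_\lambda s_\lambda^{-1}$ with $s_\lambda\colon C^\bu_\lambda\to A^\bu$ a quasi-isomorphism; then $\prod_\lambda C^\bu_\lambda \xrightarrow{\prod s_\lambda} \prod_\lambda A^\bu$ is a quasi-isomorphism, and precomposing with the diagonal $\Delta\colon A^\bu\to\prod_\lambda A^\bu$ we get $\prod_\lambda C^\bu_\lambda\times_{\prod_\lambda A^\bu} A^\bu$; taking a termwise pullback $C^\bu$ of $\prod_\lambda s_\lambda$ along $\Delta$ (which exists since $\sK(\sB)$ has termwise kernels of the relevant split surjections after replacing by a mapping-cylinder construction) yields a single quasi-isomorphism $s\colon C^\bu\to A^\bu$ together with compatible maps $C^\bu\to C^\bu_\lambda$; then $(f_\lambda)$ assembles to $f\colon C^\bu\to P^\bu$ and $\phi = f s^{-1}$ does the job. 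The termwise pullback step is where I would be most careful, using that $\sB$ is idempotent-complete and the mapping cylinder of $\prod_\lambda s_\lambda$ is termwise admissibly epimorphic onto $\prod_\lambda A^\bu$.

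Next, injectivity. Suppose $\phi\colon A^\bu\to P^\bu$ in $\sD(\sB)$ has $\pi_\lambda\circ\phi=0$ for all $\lambda$. Represent $\phi$ by a roof $A^\bu\xleftarrow{s} C^\bu \xrightarrow{f} P^\bu$ with $s$ a quasi-isomorphism; then $\pi_\lambda f = f_\lambda$ and the vanishing of $f_\lambda s^{-1}$ in $\sD(\sB)$ means there is a quasi-isomorphism $r_\lambda\colon C'^\bu_\lambda\to C^\bu$ with $f_\lambda r_\lambda$ null-homotopic in $\sK(\sB)$. Again the sources vary, so I would form $\prod_\lambda r_\lambda$, compose with the diagonal $C^\bu\to\prod_\lambda C^\bu$, take the termwise pullback to obtain a single quasi-isomorphism $r\colon C'^\bu\to C^\bu$ such that each $f_\lambda r$ is null-homotopic; choosing homotopies $h_\lambda$ and assembling them termwise (legitimate since $\prod_\lambda$ is computed degreewise in $\sK(\sB)$) gives a homotopy showing $f r = (f_\lambda r)_\lambda = 0$ in $\sK(\sB)$, whence $\phi = f s^{-1} = (fr)(sr)^{-1} = 0$ in $\sD(\sB)$.

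The main obstacle, and the place the hypothesis of exact products is essential, is the repeated manoeuvre of converting a $\Lambda$-indexed family of roofs with varying sources into a \emph{single} roof: this requires knowing that $\prod_\lambda(\text{quasi-isomorphism})$ is again a quasi-isomorphism and that $\prod_\lambda(\text{acyclic complex})$ is acyclic, both of which fail without exactness of products and both of which follow immediately from it. Once that is in hand, everything else is the routine calculus of fractions in $\sD(\sB)$ together with the termwise nature of products in $\sK(\sB)$. I would also remark that Lemma~\ref{bounded-cohomology-invariant} and Lemma~\ref{direct-summand-cohomologically-bounded} are not needed here; this lemma is purely formal given exactness of products.
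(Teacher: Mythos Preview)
The paper's proof is a single sentence: it invokes the general fact (B\"okstedt--Neeman \cite[Lemma~1.5]{BN}) that if $\mathsf T$ is a triangulated category with products and $\mathsf S\subset\mathsf T$ is a thick subcategory closed under products, then the Verdier quotient functor $\mathsf T\to\mathsf T/\mathsf S$ preserves products. Applied with $\mathsf T=\sK(\sB)$ and $\mathsf S$ the full subcategory of acyclic complexes (closed under products precisely because $\sB$ has exact products), this yields the lemma immediately.

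Your approach amounts to reproving that result by a direct calculus-of-fractions argument. The outline is sound, but the step you rightly flag as delicate has a genuine gap as written. Replacing $\prod_\lambda s_\lambda$ by the projection $\operatorname{cyl}(\prod_\lambda s_\lambda)\to\prod_\lambda A^\bu$ does produce a termwise split epimorphism, and its termwise pullback along the diagonal exists (by idempotent-completeness of~$\sB$) and maps to $A^\bu$ by a quasi-isomorphism. However, that pullback maps to the cylinder, and there is \emph{no chain map} from $\operatorname{cyl}(\prod_\lambda s_\lambda)$ back to $\prod_\lambda C^\bu_\lambda$ (the coordinate projection fails to commute with the differential), so you do not obtain the promised ``compatible maps $C^\bu\to C^\bu_\lambda$'' needed to assemble the morphism into $P^\bu$. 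The standard repair---which is exactly what underlies the B\"okstedt--Neeman argument---is to form the \emph{homotopy} pullback in $\sK(\sB)$,
\[
 C^\bu=\operatorname{cone}\Bigl(A^\bu\oplus\prod\nolimits_\lambda C^\bu_\lambda
 \xrightarrow{\ (\Delta,\,-\prod_\lambda s_\lambda)\ }
 \prod\nolimits_\lambda A^\bu\Bigr)[-1],
\]
a concrete complex in $\sB$ carrying honest chain-map projections to both $A^\bu$ and $\prod_\lambda C^\bu_\lambda$; the projection to $A^\bu$ sits in a distinguished triangle whose third term is $\prod_\lambda\operatorname{cone}(s_\lambda)$, acyclic by hypothesis. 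The same device handles the injectivity half. With this correction your argument goes through and is essentially \cite[Lemma~1.5]{BN} unpacked in the present setting.
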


\begin{proof}
 This is an instance of the general fact that the Verdier quotient
functor of a triangulated category with infinite products by
a thick subcategory closed under products preserves infinite
products~\cite[Lemma~1.5]{BN}.
\end{proof}

\subsection{Bounded below complexes of injectives}
 We refer to~\cite[Section~11]{Bueh} for a discussion of injective
objects in exact categories.

\begin{lem} \label{bounded-below-are-homotopy-injective}
 Let\/ $\sA$ be an exact category and $0\rarrow J^0\rarrow J^1\rarrow
J^2\rarrow\dotsb$ be a bounded below complex of injective objects
in\/~$\sA$.
 Then, for any complex $C^\bu$ in\/ $\sA$, the Verdier quotient functor
$\sK(\sA)\rarrow\sD(\sA)$ induces an isomorphism of the\/ $\Hom$ groups
$$
 \Hom_{\sK(\sA)}(C^\bu,J^\bu)\rarrow\Hom_{\sD(\sA)}(C^\bu,J^\bu).
$$
\end{lem}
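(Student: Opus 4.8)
The plan is to reduce the assertion to the statement that $J^\bu$ is \emph{homotopy injective}, i.e.\ that $\Hom_{\sK(\sA)}(A^\bu,J^\bu)=0$ for every acyclic complex $A^\bu$ in~$\sA$, and then to deduce the lemma from this by a standard argument with the calculus of fractions. Recall that $\sD(\sA)$ is the Verdier quotient of $\sK(\sA)$ by the thick subcategory of acyclic complexes, and that the quasi-isomorphisms in $\sK(\sA)$ admit a calculus of fractions (\cite[Section~10.4]{Bueh}). Any morphism $C^\bu\to J^\bu$ in $\sD(\sA)$ is thus represented by a roof $C^\bu\xleftarrow{s}\widetilde C^\bu\xrightarrow{f}J^\bu$ with $s$ a quasi-isomorphism; the cone $A^\bu$ of $s$ is acyclic, and applying $\Hom_{\sK(\sA)}(-,J^\bu)$ to the distinguished triangle $\widetilde C^\bu\rarrow C^\bu\rarrow A^\bu\rarrow\widetilde C^\bu[1]$ in $\sK(\sA)$, together with the vanishing for $A^\bu$ and its shift, shows that $s^*\:\Hom_{\sK(\sA)}(C^\bu,J^\bu)\rarrow\Hom_{\sK(\sA)}(\widetilde C^\bu,J^\bu)$ is an isomorphism. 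From this it follows at once that every roof is equivalent to one with $s=\mathrm{id}$, giving surjectivity of the map in the lemma, and that a chain map $C^\bu\to J^\bu$ that becomes zero in $\sD(\sA)$ is already zero in $\sK(\sA)$, giving injectivity.

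It then remains to prove homotopy-injectivity of $J^\bu$. Let $A^\bu$ be acyclic, with admissible short exact sequences $0\rarrow Z^i\rarrow A^i\rarrow Z^{i+1}\rarrow0$ such that $d^i_A$ is the composite $A^i\twoheadrightarrow Z^{i+1}\hookrightarrow A^{i+1}$, and let $f\:A^\bu\rarrow J^\bu$ be a chain map. Since $J^i=0$ for $i<0$, I set $h^i=0\:A^i\rarrow J^{i-1}$ for $i\le0$ and build $h^{i+1}\:A^{i+1}\rarrow J^i$ by induction on~$i$, maintaining the relations $f^j=d_J^{j-1}h^j+h^{j+1}d_A^j$. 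Given $h^0,\dots,h^i$, I consider $\varphi^i=f^i-d_J^{i-1}h^i\:A^i\rarrow J^i$; using that $f$ is a chain map together with the relation $f^{i-1}=d_J^{i-2}h^{i-1}+h^id_A^{i-1}$ (vacuous when $i\le0$), one computes $\varphi^i d_A^{i-1}=0$. As $A^{i-1}\twoheadrightarrow Z^i$ is an admissible epimorphism, this forces $\varphi^i$ to annihilate the admissible subobject $Z^i\hookrightarrow A^i$, so $\varphi^i$ factors as $g^{i+1}\circ(A^i\twoheadrightarrow Z^{i+1})$ for a unique $g^{i+1}\:Z^{i+1}\rarrow J^i$. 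Since $Z^{i+1}\hookrightarrow A^{i+1}$ is an admissible monomorphism and $J^i$ is injective, $g^{i+1}$ extends to a morphism $h^{i+1}\:A^{i+1}\rarrow J^i$, and one checks $h^{i+1}d_A^i=\varphi^i$, i.e.\ $f^i=d_J^{i-1}h^i+h^{i+1}d_A^i$, completing the step. The resulting maps $h^\bu$ constitute a contracting homotopy for $f$, so $f\simeq0$ in $\sK(\sA)$, as desired.

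I expect the only real subtlety to lie in the homotopy construction, where the exact-category bookkeeping must be handled carefully: each differential has to be factored through its cycle object as an admissible epimorphism followed by an admissible monomorphism, the factorization of $\varphi^i$ through $Z^{i+1}$ has to be justified (via the universal property of the cokernel of an admissible monomorphism), and one must invoke the defining lifting property of injective objects along admissible monomorphisms (\cite[Section~11]{Bueh}). Boundedness below of $J^\bu$ is precisely what lets the induction start from $h^i=0$ in low degrees; for unbounded complexes of injectives the analogous statement can fail. The passage from homotopy-injectivity to the statement of the lemma, by contrast, is purely formal and uses nothing about $\sA$ beyond the construction of $\sD(\sA)$ as a Verdier quotient.
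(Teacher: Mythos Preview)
Your proof is correct and follows essentially the same approach as the paper: reduce the assertion to the vanishing of $\Hom_{\sK(\sA)}(A^\bu,J^\bu)$ for every acyclic complex $A^\bu$, and establish the latter by constructing a contracting homotopy inductively on the cohomological degree. The paper is terser---it simply cites \cite[Corollary~12.6]{Bueh} for the inductive homotopy construction---while you have written out both the calculus-of-fractions reduction and the exact-category bookkeeping in full; but the underlying argument is the same.
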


\begin{proof}
 The assertion holds because, for any acyclic complex $A^\bu$ in $\sA$,
any morphism of complexes $A^\bu\rarrow J^\bu$ is homotopic to zero.
 The latter statement is provable by a well known standard argument
proceeding by induction on the cohomological
degree~\cite[Corollary~12.6]{Bueh}.
\end{proof}

\begin{lem} \label{bounded-below-complexes-of-injectives-products}
 Let\/ $\sA$ be an exact category with infinite products, $\Lambda$ be
a set, and $0\rarrow J^0_\lambda\rarrow J^1_\lambda\rarrow J^2_\lambda
\rarrow\dotsb$ be a family of uniformly bounded below complexes of
injective objects in\/ $\sA$, indexed by $\lambda\in\Lambda$.
 Then the direct product of the complexes $J_\lambda^\bu$ in\/
$\sK(\sA)$ is also their direct product in\/ $\sD(\sA)$, that is\/
$\prod_{\lambda\in\Lambda}^{\sD(\sA)}J^\bu_\lambda=
\prod_{\lambda\in\Lambda}^{\sK(\sA)}J^\bu_\lambda$.
\end{lem}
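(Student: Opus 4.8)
The plan is to exhibit the termwise direct product $P^\bu=\prod_{\lambda\in\Lambda}J^\bu_\lambda$, which exists since $\sA$ has infinite products, as a bounded below complex of injective objects, and then to read off from Lemma~\ref{bounded-below-are-homotopy-injective} that $P^\bu$, together with the coordinate projections $P^\bu\rarrow J^\bu_\lambda$, is the direct product of the family $(J^\bu_\lambda)_{\lambda\in\Lambda}$ in the derived category $\sD(\sA)$ as well.

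First I would arrange, by a common shift of all the complexes (which is an autoequivalence of both $\sK(\sA)$ and $\sD(\sA)$, hence commutes with products and does not affect the assertion), that $J^i_\lambda=0$ for all $i<0$ and all $\lambda\in\Lambda$; the hypothesis of \emph{uniform} boundedness below is exactly what makes this possible, and it yields $P^i=\prod_{\lambda\in\Lambda}J^i_\lambda=0$ for $i<0$. Next I would check that each term $P^i$ is injective in~$\sA$: there is a natural isomorphism $\Hom_\sA({-},P^i)\simeq\prod_{\lambda\in\Lambda}\Hom_\sA({-},J^i_\lambda)$ of functors $\sA^{\sop}\rarrow\sAb$, the right-hand side is a product of exact functors, and infinite products of abelian groups are exact; hence $\Hom_\sA({-},P^i)$ is exact, i.e., $P^i$ is injective. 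Thus $P^\bu$ is a bounded below complex of injective objects in~$\sA$, and Lemma~\ref{bounded-below-are-homotopy-injective} applies to it.

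Finally, for an arbitrary complex $C^\bu$ in $\sA$, I would consider the square of abelian groups
$$
\begin{array}{ccc}
\Hom_{\sK(\sA)}(C^\bu,P^\bu) & \rarrow & \prod_{\lambda\in\Lambda}\Hom_{\sK(\sA)}(C^\bu,J^\bu_\lambda) \\
\downarrow & & \downarrow \\
\Hom_{\sD(\sA)}(C^\bu,P^\bu) & \rarrow & \prod_{\lambda\in\Lambda}\Hom_{\sD(\sA)}(C^\bu,J^\bu_\lambda)
\end{array}
$$
in which the horizontal maps are induced by the projections $P^\bu\rarrow J^\bu_\lambda$ and the vertical maps by the localization functor $\sK(\sA)\rarrow\sD(\sA)$; the square commutes because the projections are genuine morphisms of complexes. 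The top arrow is an isomorphism since direct products in $\sK(\sA)$ are computed termwise, so $P^\bu$ with the coordinate projections is the product of the $J^\bu_\lambda$ in $\sK(\sA)$. The left vertical arrow is an isomorphism by Lemma~\ref{bounded-below-are-homotopy-injective} applied to $P^\bu$, and the right vertical arrow is an isomorphism as the product over $\lambda$ of the isomorphisms of Lemma~\ref{bounded-below-are-homotopy-injective} for the complexes $J^\bu_\lambda$. Hence the bottom arrow is an isomorphism, naturally in $C^\bu$, which is precisely the statement that $P^\bu=\prod_{\lambda\in\Lambda}^{\sK(\sA)}J^\bu_\lambda$, equipped with the coordinate projections, represents the direct product $\prod_{\lambda\in\Lambda}^{\sD(\sA)}J^\bu_\lambda$ in $\sD(\sA)$. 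The only step that requires any care is the verification that $P^\bu$ is again a bounded below complex of injectives; once that is in place, the rest is a formal diagram chase on top of Lemma~\ref{bounded-below-are-homotopy-injective}.
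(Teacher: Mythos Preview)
Your proof is correct and follows essentially the same approach as the paper: the paper's proof simply notes that infinite products of injective objects are injective and then invokes Lemma~\ref{bounded-below-are-homotopy-injective}, while you spell out the same argument in full detail with the commutative square of $\Hom$ groups.
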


\begin{proof}
 It is helpful to notice that infinite products of injective objects
are injective in any exact category.
 The assertion follows from
Lemma~\ref{bounded-below-are-homotopy-injective}.
\end{proof}

\subsection{Direct summand argument}
 The following lemma is a straightforward but useful device for
existence proofs.

\begin{lem} \label{direct-summand-argument}
\textup{(a)} Let\/ $\sE$ be an exact category with infinite products.
 Then, for any set\/ $\Delta$ and any family of objects
$(E_\delta\in\sE)_{\delta\in\Delta}$ there exists an object $E\in\sE$
such that every object $E_\delta$ is a direct summand of~$E$. \par
\textup{(b)} Let\/ $\sE$ be an exact category with infinite products.
 Then, for any sets\/ $\Lambda$ and\/ $\Delta$ and any family of
families of objects
$(E_{\lambda,\delta}\in\sE)_{\lambda\in\Lambda,\,\delta\in\Delta}$
there exists a family of objects
$(E_\lambda\in\sE)_{\lambda\in\Lambda}$ such every object
$E_{\lambda,\delta}$ is a direct summand of~$E_\lambda$.
\end{lem}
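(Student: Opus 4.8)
The plan is to take direct products. For part~(a), set $E=\prod_{\delta\in\Delta}E_\delta$, which exists in $\sE$ by hypothesis, and fix $\delta_0\in\Delta$. Let $p_{\delta_0}\:E\rarrow E_{\delta_0}$ be the canonical projection, and let $s_{\delta_0}\:E_{\delta_0}\rarrow E$ be the morphism whose $\delta_0$\+component is $\mathrm{id}_{E_{\delta_0}}$ and whose $\delta$\+component is zero for all $\delta\ne\delta_0$; such a morphism exists by the universal property of the product. Then $p_{\delta_0}s_{\delta_0}=\mathrm{id}_{E_{\delta_0}}$, so $p_{\delta_0}$ is a split epimorphism.

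Consequently $e_{\delta_0}:=s_{\delta_0}p_{\delta_0}$ is an idempotent endomorphism of~$E$. Since we assume all our exact categories to be idempotent-complete, $e_{\delta_0}$ is the projection onto a direct summand in a decomposition $E=E'_{\delta_0}\oplus G_{\delta_0}$ in~$\sE$. It is then immediate that $p_{\delta_0}$ and $s_{\delta_0}$ restrict to mutually inverse isomorphisms between $E'_{\delta_0}$ and~$E_{\delta_0}$, so $E_{\delta_0}$ is (isomorphic to) a direct summand of~$E$. As $\delta_0\in\Delta$ was arbitrary, this proves~(a). One can also phrase the same argument purely exact-categorically: the split epimorphism $p_{\delta_0}$ is an admissible epimorphism in any weakly idempotent-complete exact category, hence has a kernel $K_{\delta_0}$, and the split admissible short exact sequence $0\rarrow K_{\delta_0}\rarrow E\rarrow E_{\delta_0}\rarrow0$ gives $E\cong E_{\delta_0}\oplus K_{\delta_0}$.

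For part~(b), I would simply apply part~(a) separately for each $\lambda\in\Lambda$: put $E_\lambda=\prod_{\delta\in\Delta}E_{\lambda,\delta}$. Then the argument above shows that each $E_{\lambda,\delta}$ is a direct summand of~$E_\lambda$, and the family $(E_\lambda)_{\lambda\in\Lambda}$ has the required property.

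I do not expect any genuine obstacle here: the statement is purely formal, and the only point that needs a word of justification is the passage from the retraction data $p_{\delta_0}s_{\delta_0}=\mathrm{id}$ to an actual direct sum decomposition, which is exactly what the standing idempotent-completeness hypothesis on $\sE$ provides (cf.\ \cite[Sections~6--7]{Bueh}). Everything else is the universal property of infinite products.
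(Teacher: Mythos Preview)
Your proof is correct and follows exactly the paper's approach: the paper's proof is simply ``put $E=\prod_{\delta\in\Delta}E_\delta$'' for~(a) and ``put $E_\lambda=\prod_{\delta\in\Delta}E_{\lambda,\delta}$'' for~(b). You have merely spelled out why each factor is a direct summand of its product, which the paper leaves implicit.
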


\begin{proof}
 Part~(a): put $E=\prod_{\delta\in\Delta}E_\delta$.
 Part~(b): put $E_\lambda=\prod_{\delta\in\Delta}E_{\lambda,\delta}$
for every $\lambda\in\Lambda$.
\end{proof}

\subsection{Na\"\i ve co-contra correspondence implies Roos axiom}
 Let\/ $\sA$ be an exact category with enough injective
objects (see the dual definition in~\cite[Definition~11.9]{Bueh}).
 Notice that in this case every object $A\in\sA$ has an injective
coresolution $0\rarrow A\rarrow J^0\rarrow J^1\rarrow J^2
\rarrow\dotsb$ \,\cite[Proposition~12.2]{Bueh}.
 Viewed as an object of the homotopy category $\sK(\sA)$,
an injective coresolution $J^\bu$ of an object $A\in\sA$ is defined
uniquely up to a unique isomorphism~\cite[Corollary~12.5]{Bueh}.

 Assume further that $\sA$ has infinite direct products.
 Given a set $\Lambda$ and a family of objects
$(A_\lambda\in\sA)_{\lambda\in\Lambda}$, choose an injective
coresolution $0\rarrow A_\lambda\rarrow J^0_\lambda\rarrow J^1_\lambda
\rarrow J^2_\lambda\rarrow\dotsb$ for every object~$A_\lambda$.
 Let us say that the exact category $\sA$ \emph{satisfies the Roos
axiom $\AB4^*$\+$n$} for some integer $n\ge0$ if, for every set
$\Lambda$ and every family of objects $A_\lambda\in\sA$,
the complex $\prod^{\sK(\sA)}_{\lambda\in\Lambda}J_\lambda^\bu$ is
acyclic in cohomological degrees~$>n$.

 For any exact category $\sE$, in addition to the unbounded derived
category $\sD(\sE)$, one can also consider the bounded derived
category $\sD^\bb(\sE)$.
 The natural functor $\sD^\bb(\sE)\rarrow\sD(\sE)$ is fully
faithful~\cite[Lemma~11.7]{Kel}.

\begin{thm} \label{roos-axiom-co-contra}
 Let\/ $\sA$ be an idempotent-complete exact category with direct
products and enough injective objects, and let\/ $\sB$ be
an idempotent-complete exact category with \emph{exact} direct
products.
 Assume that there exists a triangulated equivalence\/
$\sD(\sA)\simeq\sD(\sB)$ that restricts to an equivalence between
the full subcategories\/ $\sD^\bb(\sA)\subset\sD(\sA)$ and\/
$\sD^\bb(\sB)\subset\sD(\sB)$, that is\/ $\sD^\bb(\sA)\simeq
\sD^\bb(\sB)$.
 Then there \emph{exists} a finite integer $n\ge0$ for which
the exact category\/ $\sA$ satisfies $\AB4^*$\+$n$.
\end{thm}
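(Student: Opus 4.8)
The plan is to first show that the product complex is cohomologically bounded for \emph{each individual} family (with a bound depending on the family), and then to upgrade this to a uniform bound by a disjoint-union and direct-summand argument. Write $\Phi\:\sD(\sA)\rarrow\sD(\sB)$ for the given triangulated equivalence; by hypothesis it restricts to an equivalence $\sD^\bb(\sA)\simeq\sD^\bb(\sB)$, and the natural functors $\sD^\bb(\sA)\rarrow\sD(\sA)$ and $\sD^\bb(\sB)\rarrow\sD(\sB)$ are fully faithful. Given a set $\Lambda$ and a family $(A_\lambda\in\sA)_{\lambda\in\Lambda}$, choose injective coresolutions $0\rarrow A_\lambda\rarrow J^\bu_\lambda$ in $\sA$; by Lemma~\ref{bounded-below-complexes-of-injectives-products}, the termwise product $\prod_{\lambda\in\Lambda}^{\sK(\sA)}J^\bu_\lambda$ is the product, in $\sD(\sA)$, of the objects $A_\lambda$ (to which the $J^\bu_\lambda$ are quasi-isomorphic). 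Thus it suffices to control the cohomological amplitude of this derived product, which I denote $\prod_{\lambda\in\Lambda}^{\sD(\sA)}A_\lambda$.

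First I would establish the single-family bound. By Lemma~\ref{direct-summand-argument}(a), applied to the exact category $\sA$ with its direct products, there is an object $E\in\sA$ such that every $A_\lambda$ is a direct summand of $E$ in $\sA$. Regarded in cohomological degree~$0$, the object $E$ lies in $\sD^\bb(\sA)$, so $\Phi(E)$ lies in $\sD^\bb(\sB)$ and is represented by a complex in $\sB$ acyclic in cohomological degrees $>d$ and in cohomological degrees $<-d$ for some integer $d\ge0$. Since each $A_\lambda$ is a direct summand of $E$ in $\sD(\sA)$, the complex $\Phi(A_\lambda)$ is isomorphic in $\sD(\sB)$ to a direct summand of $\Phi(E)$, so by Lemma~\ref{direct-summand-cohomologically-bounded} it is acyclic in cohomological degrees $>d$ and in cohomological degrees $<-d$. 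Now I would take termwise products in $\sB$: as $\sB$ has exact direct products, the termwise product of the short exact sequences witnessing $H^{>d}(\Phi(A_\lambda))=0$ is again short exact and splices correctly, so $\prod_{\lambda\in\Lambda}^{\sK(\sB)}\Phi(A_\lambda)$ is acyclic in cohomological degrees $>d$, and dually in cohomological degrees $<-d$. By Lemma~\ref{exact-products} this termwise product equals the product $\prod_{\lambda\in\Lambda}^{\sD(\sB)}\Phi(A_\lambda)$ in $\sD(\sB)$, which is therefore an object of $\sD^\bb(\sB)$. Since $\Phi$ preserves products and $\Phi^{-1}$ sends $\sD^\bb(\sB)$ into $\sD^\bb(\sA)$, the product $\prod_{\lambda\in\Lambda}^{\sD(\sA)}A_\lambda$ lies in $\sD^\bb(\sA)$, hence is isomorphic in $\sD(\sA)$ to a bounded complex; by Lemma~\ref{bounded-cohomology-invariant} the complex $\prod_{\lambda\in\Lambda}^{\sK(\sA)}J^\bu_\lambda$ is acyclic in cohomological degrees $>d'$ for some finite $d'\ge0$, a priori depending on the family.

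To obtain a \emph{uniform} $n$, I would argue by contradiction. Suppose $\sA$ satisfies no axiom $\AB4^*$\+$n$. Then for every $n\ge0$ there exist a set $\Lambda_n$ and a family $(A_\mu\in\sA)_{\mu\in\Lambda_n}$ with chosen injective coresolutions $J^\bu_\mu$ such that $\prod_{\mu\in\Lambda_n}^{\sK(\sA)}J^\bu_\mu$ is \emph{not} acyclic in cohomological degrees $>n$. Renaming, take the $\Lambda_n$ to be pairwise disjoint, set $\Lambda=\coprod_{n\ge0}\Lambda_n$, and consider the family $(A_\mu)_{\mu\in\Lambda}$ with its coresolutions $(J^\bu_\mu)_{\mu\in\Lambda}$. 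A product of complexes indexed by a disjoint union of sets is the iterated product, so for each $n$ the complex $\prod_{\mu\in\Lambda_n}^{\sK(\sA)}J^\bu_\mu$ is a direct summand of $\prod_{\mu\in\Lambda}^{\sK(\sA)}J^\bu_\mu$ in $\sK(\sA)$, hence also in $\sD(\sA)$. By the previous paragraph, $\prod_{\mu\in\Lambda}^{\sK(\sA)}J^\bu_\mu$ is acyclic in cohomological degrees $>d'$ for some $d'\ge0$; by Lemma~\ref{direct-summand-cohomologically-bounded}, so is its direct summand $\prod_{\mu\in\Lambda_{d'}}^{\sK(\sA)}J^\bu_\mu$. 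This contradicts the choice of the family indexed by $\Lambda_{d'}$. Therefore $\sA$ satisfies $\AB4^*$\+$n$ for some finite $n\ge0$.

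The one genuinely delicate point is that the single-family bound $d'$ depends on the family and there is no ``largest'' family over which to take a supremum; the disjoint-union trick circumvents this, using that passing to a subfamily exhibits its injective-coresolution product as a direct summand. Idempotent-completeness of $\sA$ and $\sB$ enters essentially through Lemma~\ref{direct-summand-cohomologically-bounded}: in $\sB$ to bound $\Phi(A_\lambda)$ as a summand of $\Phi(E)$, and in $\sA$ to transfer the boundedness of the big product to the subfamily products.
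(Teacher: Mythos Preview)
Your proof is correct and follows the same overall strategy as the paper's: dominate all $A_\lambda$ by a single object via Lemma~\ref{direct-summand-argument}(a), transfer the boundedness of its image under the equivalence to uniform cohomological bounds on the $\Phi(A_\lambda)$ via Lemma~\ref{direct-summand-cohomologically-bounded}, use exact products in $\sB$ together with Lemma~\ref{exact-products} to control the product, and pull back through the equivalence to $\sD^\bb(\sA)$.  The only genuine difference is in the passage to a \emph{uniform} bound: you observe directly that $\prod_{\mu\in\Lambda}^{\sK(\sA)}J_\mu^\bu\simeq\prod_n\prod_{\mu\in\Lambda_n}^{\sK(\sA)}J_\mu^\bu$ so that each partial product is a direct summand, whereas the paper instead applies Lemma~\ref{direct-summand-argument}(b) to manufacture a single $\Lambda$-indexed family $(A_\lambda)$ that termwise dominates every family $(A_{\lambda,\delta})$.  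Your variant is slightly more elementary and avoids the extra layer of products inside each~$A_\lambda$; the paper's variant has the mild advantage that the ``bad'' complex is produced first and the co-contra argument is run only once, but the content is the same.
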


\begin{proof}
 Suppose for the sake of contradiction that such a finite integer~$n$
does \emph{not} exist.
 Then, for every integer $\delta\ge0$, there is a set $\Lambda_\delta$
and a family of objects
$(A_{\lambda,\delta}\in\sA)_{\lambda\in\Lambda_\delta}$ such that,
choosing an injective coresolution
$0\rarrow A_{\lambda,\delta}\rarrow J^\bu_{\lambda,\delta}$ for
each object $A_{\lambda,\delta}$, the complex
$\prod_{\lambda\in\Lambda_\delta}^{\sK(\sA)}J^\bu_{\lambda,\delta}$
is \emph{not} acyclic in cohomological degrees~$>\delta$.

 Denote by $\Lambda$ the disjoint union
$\Lambda=\coprod_{\delta=0}^\infty\Lambda_\delta$, and put
$A_{\lambda,\delta}=0$ for all
$\lambda\in\Lambda\setminus\Lambda_\delta$.
 Applying Lemma~\ref{direct-summand-argument}(b), we get
a family of objects $(A_\lambda\in\sA)_{\lambda\in\Lambda}$
such that $A_{\lambda,\delta}$ is a direct summand of $A_\lambda$
for every $\lambda\in\Lambda$ and $\delta\ge0$.
 In other words, for every $\delta\ge0$, the family of objects
$(A_{\lambda,\delta})_{\lambda\in\Lambda}$ is a direct summand
of the family of objects $(A_\lambda)_{\lambda\in\Lambda}$.

 Choose an injective coresolution $0\rarrow A_\lambda\rarrow
J_\lambda^\bu$ for every object~$A_\lambda$.
 Then, for every $\delta\ge0$, the complex
$\prod_{\lambda\in\Lambda_\delta}^{\sK(\sA)}J^\bu_{\lambda,\delta}$,
viewed as an object of the homotopy category $\sK(\sA)$, is
a direct summand of the complex
$\prod_{\lambda\in\Lambda}^{\sK(\sA)}J^\bu_\lambda$.
 Applying Lemma~\ref{direct-summand-cohomologically-bounded},
we arrive to the conclusion that, whatever integer $d\ge0$ one
chooses, the single complex
$\prod_{\lambda\in\Lambda}^{\sK(\sA)}J^\bu_\lambda$ is
\emph{not} acyclic in cohomological degrees~$>d$.

 Denote our triangulated equivalence by
$$
 \Psi\:\sD(\sA)\simeq\sD(\sB):\!\Phi.
$$
 Applying Lemma~\ref{direct-summand-argument}(a), we can find an object
$A\in\sA$ such that, for every $\lambda\in\Lambda$, the object
$A_\lambda$ is a direct summand of $A$ in~$\sA$.
 Then we have $A\in\sA\subset\sD^\bb(\sA)\subset\sD(\sA)$, and
by assumption it follows that $\Psi(A)\in\sD^\bb(\sB)\subset\sD(\sB)$.
 So there exist integers $d_1\ge0$ and $d_2\ge0$ such that
the object $\Psi(A)\in\sD^\bb(\sB)$ can be represented by
a complex $B^\bu$ concentrated in the cohomological degrees
$-d_1\le i\le d_2$, i.~e., $B^i=0$ when $i<-d_1$ or $i>d_2$.
 For every $\lambda\in\Lambda$, the object $\Psi(A_\lambda)\in
\sD(\sB)$ is a direct summand of the object $\Psi(A)\in\sD(\sB)$.
 By Lemma~\ref{direct-summand-cohomologically-bounded}, it follows
that the object $\Psi(A_\lambda)\in\sD(\sB)$ can be represented
by a complex $B^\bu_\lambda$ that is also concentrated in
the cohomological degrees $-d_1\le i\le d_2$, i.~e.,
$B^i_\lambda=0$ for every $\lambda\in\Lambda$ when $i<-d_1$ or $i>d_2$.

 According to
Lemma~\ref{bounded-below-complexes-of-injectives-products}, we have
$\prod^{\sD(\sA)}_{\lambda\in\Lambda}A_\lambda=
\prod^{\sD(\sA)}_{\lambda\in\Lambda}J_\lambda^\bu=
\prod^{\sK(\sA)}_{\lambda\in\Lambda}J_\lambda^\bu$.
 According to
Lemma~\ref{exact-products}, we have
$\prod^{\sD(\sB)}_{\lambda\in\Lambda}B_\lambda^\bu=
\prod^{\sK(\sB)}_{\lambda\in\Lambda}B_\lambda^\bu$.
 As any equivalence of categories, the equivalence
$\sD(\sA)\simeq\sD(\sB)$ preserves infinite products, so
$\Psi(A_\lambda)=B_\lambda^\bu$ implies
$$
 \Psi\left(\prod\nolimits^{\sD(\sA)}_{\lambda\in\Lambda}A_\lambda\right)
 =\prod\nolimits^{\sD(\sB)}_{\lambda\in\Lambda}B_\lambda^\bu.
$$
 We have shown that
$$
 \Psi\left(\prod\nolimits^{\sK(\sA)}_{\lambda\in\Lambda}
 J_\lambda^\bu\right)
 =\prod\nolimits^{\sK(\sB)}_{\lambda\in\Lambda}B_\lambda^\bu.
$$
 Now we have $\prod^{\sK(\sB)}_{\lambda\in\Lambda}B_\lambda^\bu
\in\sD^\bb(\sB)$, hence by assumption
$\prod\nolimits^{\sK(\sA)}_{\lambda\in\Lambda}J_\lambda^\bu\in
\sD^\bb(\sA)\subset\sD(\sA)$.

 In other words, this means that there exists $d\ge0$ such that
the complex $\prod_{\lambda\in\Lambda}^{\sK(\sA)}J^\bu_\lambda$
is acyclic in cohomological degrees~$>d$.
 We have come to a contradiction proving existence of an integer
$n\ge0$ for which the exact category $\sA$ satisfies $\AB4^*$\+$n$.
\end{proof}

\subsection{Roos axiom for finite-dimensional Noetherian schemes~II}
 Now we can briefly discuss the application of
Theorem~\ref{roos-axiom-co-contra} to the categories $X\Qcoh$ of
quasi-coherent sheaves on Noetherian schemes $X$ of finite Krull
dimension.
 The following corollary is a weaker version of
Theorem~\ref{roos-axiom-cech-coresolution-finite-krull-dim}.

\begin{cor} \label{roos-axiom-co-contra-finite-krull-dim}
 For any Noetherian scheme $X$ of finite Krull dimension, there
\emph{exists} an integer $n\ge0$ such that the Grothendieck category
$X\Qcoh$ satisfies $\AB4^*$\+$n$.
\end{cor}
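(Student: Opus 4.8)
The plan is to apply Theorem~\ref{roos-axiom-co-contra} with $\sA=X\Qcoh$, the abelian category of quasi-coherent sheaves on~$X$, and with $\sB=X\Ctrh$, the exact category of contraherent cosheaves on~$X$. First I would verify the hypotheses on $\sA$: as recalled in Section~\ref{cech-semi-separated-secn}, for any scheme $X$ the category $X\Qcoh$ is a Grothendieck abelian category, so it has enough injective objects and arbitrary (in particular, infinite) direct products, and being abelian it is idempotent-complete. Hence $\sA$ is an idempotent-complete exact category with direct products and enough injective objects, exactly as required.

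Next I would invoke the na\"\i ve co-contra correspondence for Noetherian schemes of finite Krull dimension established in~\cite[Section~6]{Pcosh} (see~\cite[Theorem~6.8.1]{Pcosh}). By the definition of a na\"\i ve co-contra correspondence recalled in the Introduction, this provides an idempotent-complete exact category $\sB=X\Ctrh$ with \emph{exact} infinite products, together with triangulated equivalences $\sD^\star(X\Qcoh)\simeq\sD^\star(X\Ctrh)$ for $\star\in\{\bb,+,-,\varnothing\}$; here the finiteness of the Krull dimension of $X$ is precisely what makes the case $\star=\varnothing$ of the unbounded derived categories available. Since all these equivalences are produced by the same pair of (co)resolution functors, the unbounded equivalence $\sD(X\Qcoh)\simeq\sD(X\Ctrh)$ restricts to the bounded one $\sD^\bb(X\Qcoh)\simeq\sD^\bb(X\Ctrh)$. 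Then Theorem~\ref{roos-axiom-co-contra} applies and yields a finite integer $n\ge0$ for which $\sA=X\Qcoh$ satisfies $\AB4^*$\+$n$ in the sense of that theorem; for a Grothendieck category this is precisely the Roos axiom $\AB4^*$\+$n$ from~\cite[Definition~1.1]{Roos}, since the termwise product $\prod^{\sK(\sA)}_{\lambda}J^\bu_\lambda$ of injective coresolutions computes the derived functors $\prod^{(m)}_\lambda$ of infinite direct product.

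The step that will require the most care is not conceptual but bookkeeping: one must match the precise statements of~\cite{Pcosh} to the hypotheses of Theorem~\ref{roos-axiom-co-contra}, i.~e., that the variant of $X\Ctrh$ used there is genuinely idempotent-complete and has exact products, and that the co-contra correspondence is stated (or readily assembled) for the unbounded derived categories under the sole assumption of finite Krull dimension, with the bounded equivalence obtained by restriction. Should the category of contraherent cosheaves in~\cite{Pcosh} be only weakly idempotent-complete, one could first replace it by a suitable idempotent completion---still an exact category with exact products and with the same bounded and unbounded derived categories---before quoting Theorem~\ref{roos-axiom-co-contra}. Finally, this abstract argument only proves the \emph{existence} of~$n$; for the explicit bound $n=2D+1$ when $X$ has Krull dimension~$D$ one would instead follow the more concrete version of the argument in~\cite[Theorem~6.10.1]{Pcosh}.
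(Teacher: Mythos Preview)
Your proposal is correct and follows essentially the same route as the paper's own proof: set $\sA=X\Qcoh$ and $\sB=X\Ctrh$, invoke~\cite[Theorem~6.8.1]{Pcosh} for the na\"\i ve co-contra correspondence, and apply Theorem~\ref{roos-axiom-co-contra}. The paper is terser---it simply cites~\cite[Section~2.2]{Pcosh} for the fact that $X\Ctrh$ is idempotent-complete with exact products rather than hedging about possible weak idempotent-completeness---but the argument is the same, including the closing remark that the explicit bound $n=2D+1$ requires the more concrete analysis of~\cite[Theorem~6.10.1]{Pcosh}.
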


\begin{proof}
 For any scheme $X$, the construction of~\cite[Section~2.2]{Pcosh}
produces an exact category of \emph{contraherent cosheaves}
$X\Ctrh$, which is an idempotent-complete exact category with
exact direct products.
 In the situation at hand with a Noetherian scheme $X$ of finite Krull
dimension, put $\sA=X\Qcoh$ and $\sB=X\Ctrh$.
 Then the result of~\cite[Theorem~6.8.1]{Pcosh} provides
a triangulated equivalence $\sD(\sA)\simeq\sD(\sB)$ that restricts
to a triangulated equivalence $\sD^\bb(\sA)\simeq\sD^\bb(\sB)$.
 It remains to refer to Theorem~\ref{roos-axiom-co-contra}.
\end{proof}

 Similarly one can use Theorem~\ref{roos-axiom-co-contra}
and~\cite[Theorem~4.8.1]{Pcosh} in order to deduce the Roos axiom
for the category of quasi-coherent sheaves $X\Qcoh$ on any
quasi-compact semi-separated scheme~$X$ (but in this case,
Corollary~\ref{roos-axiom-very-flat-generator-semi-separated}
provides a more direct argument).

 The assertion of Corollary~\ref{roos-axiom-co-contra-finite-krull-dim}
is an existence theorem, and its proof is a nonconstructive existence
proof.
 To extract a specific bound on the number~$n$ appearing in
Corollary~\ref{roos-axiom-co-contra-finite-krull-dim}, one has to
look into the details of the construction of the co-contra
correspondence functors $\Phi$ and $\Psi$
in~\cite[Theorem~6.8.1]{Pcosh}.
 The related argument is presented in~\cite[Theorem~6.10.1]{Pcosh};
it shows that one can have $n=2D+1$, where $D$ is the Krull dimension
of~$X$.

\begin{quest}
 Does the Roos axiom hold for the categories $X\Qcoh$ of
quasi-coherent sheaves on non-semi-separated Noetherian schemes
of infinite Krull dimension?
 All our proofs (in
Theorems~\ref{roos-axiom-cech-coresolution-semi-separated}
and~\ref{roos-axiom-cech-coresolution-finite-krull-dim},
as well as in
Corollaries~\ref{roos-axiom-very-flat-generator-semi-separated}
and~\ref{roos-axiom-co-contra-finite-krull-dim}, and also
in~\cite[Theorem~6.10.1]{Pcosh}) assume either finite Krull
dimension, or semi-separatedness.
 Still, we are not aware of any counterexamples.
\end{quest}

\bigskip

\end{document}